\newcommand{\s}{\small}
\newlength{\notewidth}
\newtheorem{lemma}{Lemma}[section]
\newtheorem{proposition}[lemma]{Proposition}
\newtheorem{theorem}{Theorem}
\newtheorem{corollary}[lemma]{Corollary}
\theoremstyle{definition}
\newtheorem{remark}[lemma]{Remark}
\renewcommand{\r}{{\mathbf{r}}}
\renewcommand{\v}{{\mathbf{v}}}
\newcommand{\z}{{\mathbf{z}}}
\newcommand{\q}{{\mathbf{q}}}
\newcommand{\p}{{\mathbf{p}}}
\newcommand{\CO}{\mathrm{CO}}
\newcommand{\COto}{\CO_{2,1}}
\newcommand{\diag}{\mathrm{diag}}
\newcommand{\SL}{\mathrm{SL}}
\newcommand{\PSLt}{\mathrm{PSL}_2(\R)}
\newcommand{\GL}{\mathrm{GL}}
\newcommand{\SLt}{{\SL_2(\R)}} 
\newcommand{\PGL}{\mathrm{PGL}}
\newcommand{\SO}{\mathrm{SO}}
\newcommand{\tr}{{\rm tr}}
\newcommand{\cH}{\mathcal H}
\newcommand{\cP}{\mathcal P}
\newcommand{\ccP}{\overline\cP}
\newcommand{\cC}{\mathcal C}
\newcommand{\ccC}{\overline{\cC}}
\renewcommand{\b}{{\bf b}}
\newcommand{\R}{\mathbb{R}}
\newcommand{\C}{\mathbb{C}}
\newcommand{\Rto}{\R^{2,1}}
\newcommand{\Rot}{\R^{2,1}}
\newcommand{\RP}{\R P}
\newcommand{\RPt}{\RP^3}
\renewcommand{\qed}{\hfill$\square$}
\newcommand{\st}{\, | \,}
\newcommand{\n}{\noindent}
\newcommand{\sn}{\smallskip\n} 
\newcommand{\mn}{\medskip\noindent}
\newcommand{\be}{\begin{equation}}
\newcommand{\ee}{\end{equation}}
\newcommand{\PSL}{\mathrm{ PSL}}
\newcommand{\SU}{\mathrm{SU}}
\newcommand{\OS}{\mathcal{G}}
\newcommand{\tOS}{\widetilde{\OS}}
\newcommand{\ssec}{\subsection}
 \newcommand{\benum}{\begin{enumerate}[label=$(\mathrm{\alph*})$, left=-5px, itemsep=-2px]}
\newcounter{ticount} 
\newcommand{\ti}{\refstepcounter{ticount}\theticount}
\title {Revisiting Kepler: new symmetries of an old problem} 
\author{Gil Bor\footnote{
CIMAT, A.P. 402, Guanajuato, Gto. 36000, Mexico; 
gil@cimat.mx
}
\and
Connor Jackman\footnote{
CIMAT, A.P. 402, Guanajuato, Gto. 36000, Mexico; 
connor.jackman@cimat.mx
}
}
\date{\today}
\begin{document}

\maketitle

\begin{abstract}
The  {\em Kepler orbits}  form  a 3-parameter family of  {\em unparametrized}  plane curves, consisting of  all conics sharing a focus  at a fixed point.  We study the geometry and symmetry properties of this family, as well  as natural 2-parameter subfamilies, such as those of fixed energy   or angular momentum. 

Our main result  is  that Kepler orbits  is a `flat' family, that is,  the local diffeomorphisms of the plane preserving   this  family form  a 7-dimensional local group, the maximum dimension  possible for the symmetry group of a 3-parameter family of plane curves. These symmetries are different from the well-studied `hidden' symmetries  of the Kepler problem, acting  on energy levels in the 4-dimensional phase space of the Kepler system.

Each 2-parameter subfamily of Kepler orbits with fixed non-zero energy (Kepler ellipses or hyperbolas with fixed length of major axis) admits  $\PSL_2(\R)$   as its (local) symmetry group, corresponding to one of the items of  a classification due to  A. Tresse (1896)  of 2-parameter families of plane curves admitting a 3-dimensional local group of  symmetries. The 2-parameter subfamilies with zero energy (Kepler parabolas) or fixed non-zero angular momentum are flat (locally diffeomorphic to the family of straight lines). 

These results can be proved using techniques developed in the 19th century by S. Lie to determine `infinitesimal point symmetries' of ODEs, but  our proofs  are much simpler, using a projective geometric model for the Kepler orbits (plane sections of a cone in projective 3-space). In this projective model all symmetry groups act globally. 

Another advantage of the projective model is  a duality between  Kepler's plane and Minkowski's 3-space parametrizing the space of Kepler orbits. We use this duality to deduce several results on the Kepler system, old and new. 

\end{abstract}

\tableofcontents

\section{Introduction and statement of main results}
A {\em Kepler orbit} is  a plane conic -- ellipse, parabola or hyperbola -- 
 with a focus at the origin (in case of a hyperbola  only the branch bending around the origin is taken). 
 Kepler orbits  form   a 3-parameter family of   plane curves, traced by the motions  of a  point mass subject to Newton's inverse square law:
 the radial attractive force is proportional to the inverse square of the distance to the origin. 
 We exclude `collision orbits' (lines through the origin). 
 See Figure  \ref{fig:kepler_orbits}. 

\begin{figure}[!htb]
        \centering
 \def\svgwidth{.9\textwidth}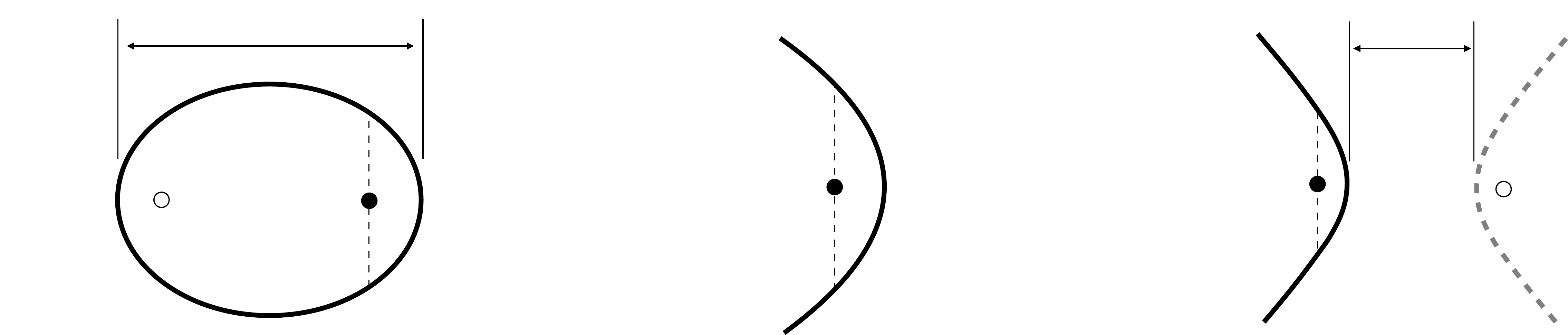
        \caption{Kepler orbit types (ellipse, parabola or hyperbola), shapes and sizes are given by  their energy $E$ and angular momentum $M$.  The major axis is $1/|E|$ and the {\em Latus rectum} (vertical dotted segment) is $2M^2$. 
        See Section \ref{sec:reminder}. }
        \label{fig:kepler_orbits}
  \end{figure}

\ssec{Orbital symmetries} These  are local diffeomorphisms of $\R^2\setminus 0,$ taking (unparametrized) Kepler orbits   to Kepler orbits.   At the outset, it is not clear that there  are any such symmetries, local or global, other than the obvious  ones --  dilations and rotations about the origin, or reflections about lines through the origin (a 2-dimensional group of symmetries). 
Nevertheless, as we find out, there are many  additional  orbital symmetries, both for the full 3-parameter family  of Kepler orbits, as well as for some natural 2-parameter subfamilies. 
\begin{theorem}\label{thm:main1}The orbital symmetries of the Kepler problem form a 7-dimensional  group  of local diffeomorphisms of $\R^2\setminus 0$ (aka a `pseudo-group'),
%
%
  the maximum dimension possible for a 3-parameter family of plane curves, generated by the following infinitesimal symmetries (vector fields whose flows act by orbital symmetries):
\be\label{eq:vf}
r\partial_r,\ 						
\partial_\theta,\ 						
r\partial_x,\  r\partial_y, \ -x r\partial_r,\ -y r\partial_r,\ 
-r^2\partial_r
\ee
\noindent (using   both Cartesian and polar coordinates). 
\end{theorem}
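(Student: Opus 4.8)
The plan is to make the family of Kepler orbits explicit as level curves, exhibit the seven listed fields as symmetries via a geometric model, and then deduce that there can be no more by the classical order-three symmetry bound. The key preliminary observation is an implicit equation for the orbits. A conic with a focus at the origin has polar equation $r = p/(1+e\cos(\theta-\theta_0))$, which after clearing denominators and substituting $x=r\cos\theta,\ y=r\sin\theta$ becomes $a\,r+b\,x+c\,y=1$ for real constants $(a,b,c)$ (with $a=1/p$). Thus the Kepler orbits are exactly the level-set family
\[
\mathcal F=\{\,F_{a,b,c}=0\,\},\qquad F_{a,b,c}(x,y)=a\,r+b\,x+c\,y-1,\quad r=\sqrt{x^{2}+y^{2}},
\]
an honest $3$-parameter family whose defining function is, crucially, \emph{affine} in the parameters $(a,b,c)$.

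Because $F$ is affine in the parameters, a vector field $V=\xi\partial_x+\eta\partial_y$ is an infinitesimal orbital symmetry precisely when there are functions $\dot a,\dot b,\dot c$ of $(a,b,c)$ and a multiplier $\lambda$ with $V(F)+\dot a\,r+\dot b\,x+\dot c\,y=\lambda F$. One can verify the seven fields directly; for instance the Euler field $r\partial_r=x\partial_x+y\partial_y$ gives $V(F)=F+1$, matched by the parameter flow $(\dot a,\dot b,\dot c)=(-a,-b,-c)$. Rather than grind through all seven, the efficient route is the geometric model that the affine dependence suggests: the lift $(x,y)\mapsto(x,y,r)$ identifies $\R^2\setminus 0$ with the upper nappe of the cone $\{x^{2}+y^{2}-z^{2}=0\}$ in $\R^3\subset\RPt$, under which each orbit $F_{a,b,c}=0$ becomes the intersection of the cone with the affine plane $\{a\,z+b\,x+c\,y=1\}$. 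Hence \emph{every} projective transformation of $\RPt$ preserving this cone induces an orbital symmetry, and the seven fields should be exactly the generators of that group pushed down to the $(x,y)$-plane.

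I would then compute the group $G$ of projective transformations of $\RPt$ preserving the cone. Since the cone's quadratic form $x^{2}+y^{2}-z^{2}$ omits the homogenizing coordinate, the cone is singular with vertex at the point where $x=y=z=0$; a projective map preserves it iff it fixes this vertex and restricts on $(x,y,z)$ to the conformal Lorentz group $\COto$. A block-triangular count then gives $\dim G=\dim\COto+3=4+3=7$, the extra three coming from the freedom in the homogenizing row. Reading off generators: cone scaling gives $r\partial_r$; the $xy$-rotation gives $\partial_\theta$; the two Lorentz boosts mixing $z$ with $x,y$ give $r\partial_x,\ r\partial_y$; and the three vertex-row generators descend to the nonlinear fields $-xr\partial_r,\ -yr\partial_r,\ -r^{2}\partial_r$. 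This matches the seven listed fields, shows they are $\R$-linearly independent, and (being $\mathrm{Lie}(G)$) shows they close into a $7$-dimensional Lie algebra $\g$.

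For the upper bound I would invoke the classical fact, due to Lie, that a $3$-parameter family of plane curves is locally the general solution of a third-order ODE $y'''=F(x,y,y',y'')$, and that the Lie point-symmetry algebra of any scalar third-order equation has dimension at most $3+4=7$, with equality exactly when it is point-equivalent to $y'''=0$. Combined with the seven symmetries already in hand, this pins the orbital symmetry algebra at exactly $7$ and simultaneously yields both the \textbf{maximum} and \textbf{flat} assertions. I expect this upper bound to be the genuine obstacle: exhibiting seven symmetries is routine once the cone model is available, but ruling out any further, possibly exotic and merely local, symmetry requires either citing (or reproving) the order-three bound, or establishing a projective-rigidity statement — that any local diffeomorphism carrying the cone-section family to itself must preserve the cone's projective geometry and hence extend to an element of $G$. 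The latter is the cleaner self-contained route, after which $\dim G=7$ closes the argument without appeal to the general ODE classification.
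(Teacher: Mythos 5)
Your proposal is correct and takes essentially the same route as the paper: lift the orbits to plane sections of the cone $\{x^2+y^2=z^2\}$ in $\RPt$, show the projective transformations preserving this cone form a $7$-dimensional group (your block-triangular count with $A\in\COto$ and a free vertex row is exactly the paper's Lemma \ref{lemma:g}), read off the seven descended generators, and close the argument with Lie's bound of $7$ on point symmetries of third-order ODEs. The paper, like your primary route, cites the Lie bound rather than pursuing the projective-rigidity alternative you sketch at the end.
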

Note that the first two vector fields generate dilations and rotations, the `obvious' symmetries mentioned above.  {\em How about the rest of the symmetries? Where do they come from?}

We emphasis that the 7 vector fields of Theorem \ref{thm:main1} do {\em not}  generate a honest 7-dimensional Lie group action on $\R^2\setminus 0$.   The first 4 vector fields do generate an action of the connected component of the group $\COto$ on $\R^2\setminus 0$, but the last three vector fields are in fact imcomplete (their integral curves ``run to infinity'' in finite time). As we explain later, to obtain a global group action, one needs to embedd  the Kepler plane in a larger surface, a cone in $\RP^3$, to which the above 7 vector fields extend, generating an action of the 7-dimensional subgroup of $\PGL_4(\R)$ preserving this cone. 

Now quite generally, there is a standard method for finding infinitesimal symmetries
of $n$-parameter families of plane curves, going back to S. Lie in  the 19th century, consisting of first writing down an $n$-th  order scalar ODE whose graphs of solutions form the  curves of the family. Then, one writes down  a system of PDEs for the infinitesimal symmetries of this ODE, which with some luck and skill, one can solve  explicitly. See Chapter 6 of P.\,Olver's book \cite{O}. 
This is  a straightforward albeit   tedious procedure (best left   nowadays to computers), producing the  infinitesimal symmetries  above,  but the result  remains mysterious.

Instead, our proof of Theorem \ref{thm:main1} exploits the peculiar geometry of Kepler's problem, in particular, its {\em projective} geometry, borrowing from  Lie's theory only  the upper bound  of 7 on the dimension of the symmetry group.  This proof, 
rather then the actual statement of  Theorem \ref{thm:main1}, is the  main thrust of this article.  See subsection \ref{sec:sketch} below for a sketch of the proof.

\ssec{The space of Kepler orbits}\label{sec:orbitspace} 

As is well known, every    Kepler orbit is the orthogonal projection onto the $xy$ plane (the `Kepler plane') of  a {\em conic section}, the intersection of the  cone $\cC:=\{x^2+y^2=z^2\}\subset \R^3$ with a plane $ax+by+cz=1,\ c\neq  0$. See Section \ref{sec:reminder} below  for a  proof (due to Lagrange \cite{L}) as well as  a reminder of some other standard facts about the Kepler problem. 
Let $\Rto$ be the 3-dimensional space  with coordinates  $(a,b,c)$  equipped with 
Minkowski's  quadratic  form $\|(a,b,c)\|^2:=a^2+b^2-c^2$ (we use this notation even though the expression has negative values!). Note that the planes $ax+by+cz=1$ and $ax+by-cz=1$ (the reflection of the former about the $xy$ plane) generate the same Kepler orbit. Thus $\Rot_+=\{c>0\}\subset \Rot$ is identified with  the space of  Kepler orbits. Furthermore, 
the cone $\|(a,b,c)\|^2=0$ parametrizes Kepler parabolas, its interior $\|(a,b,c)\|^2<0$ parametrizes Kepler ellipses and its exterior  $\|(a,b,c)\|^2>0$ parametrizes Kepler hyperbolas. See Figure \ref{fig:conics}. 

\begin{figure}[!htb]
        \centering
        \includegraphics[width=\textwidth]{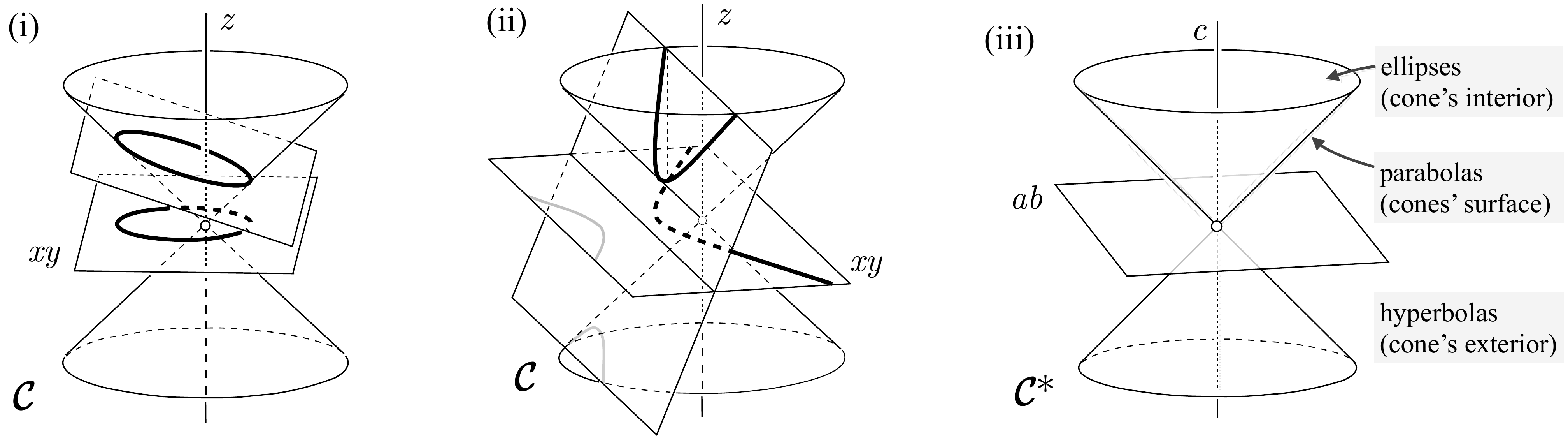} \qquad
                \caption{Kepler orbits  are orthogonal projections of conic  sections. (i) Ellipses. (ii) Hyperbolas. (iii) The space of Kepler orbits.}
        \label{fig:conics}
  \end{figure}

The orbital symmetries of Theorem \ref{thm:main1} clearly act on the space of Kepler orbits and thus on $\Rot_+$. Again, this is only a local action (a 7-dimensional Lie algebra of vector fields), but it extends to a global action on all of $\Rto$.

\begin{theorem}\label{thm:main2}The local group action of the orbital symmetries of the Kepler problem on $\Rto_+$ extends to $\Rto$, generating the identity component of the  group   $\CO_{2,1}\ltimes\Rto$ of Minkowski similarities (compositions of Minkowski  rotations, dilations and translations).  The infinitesimal generators of this action, corresponding to those of Equation \eqref{eq:vf}, 
are 
\be\label{eq:dualvf} 
-a \partial_a-b \partial_b-c \partial_c,\
-b \partial_a+a \partial_b,\
-a \partial_c-c \partial_a,\
-b \partial_c-c \partial_b,\
\partial_a,\ \partial_b\ ,\partial_c.
\ee
\end{theorem}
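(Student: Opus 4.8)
The plan is to exploit the \emph{duality} between the Kepler plane and the orbit space $\Rot_+$ already implicit in the conic-section description of subsection~\ref{sec:orbitspace}. The key object is the incidence relation: writing $r=\sqrt{x^2+y^2}$, a point $(x,y)\in\R^2\setminus 0$ lies on the Kepler orbit labelled by $(a,b,c)\in\Rot_+$ exactly when
\be\label{eq:incidence}
ax+by+cr=1.
\ee
This is immediate from the fact that the orbit is the orthogonal projection of $\cC\cap\{ax+by+cz=1\}$ and that $z=r$ on the upper sheet $\{z>0\}$ of $\cC$. On the cone this relation is \emph{bilinear} in $(x,y,z)=(x,y,r)$ and $(a,b,c)$, which is the source of all the simplifications below.

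Next I would make precise how an orbital symmetry $X=\xi\,\partial_x+\eta\,\partial_y$ of the Kepler plane induces a vector field $\hat X=\alpha\,\partial_a+\beta\,\partial_b+\gamma\,\partial_c$ on the orbit space. Since the flow of $X$ carries orbits to orbits, it descends to a flow on $\Rot_+$, and because the pair of flows preserves the incidence variety \eqref{eq:incidence}, setting $F:=ax+by+cr-1$ the generators must satisfy $X(F)+\hat X(F)=0$ on $\{F=0\}$, i.e.
\be\label{eq:detcond}
\xi\,(a+cx/r)+\eta\,(b+cy/r)+\alpha\,x+\beta\,y+\gamma\,r=0
\qquad\text{on } \{F=0\},
\ee
where $(\alpha,\beta,\gamma)$ depend on $(a,b,c)$ only. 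Parametrising a fixed orbit by its polar angle, $x=r\cos\theta,\ y=r\sin\theta$ with $r=1/(a\cos\theta+b\sin\theta+c)$, and matching the coefficients of $\cos\theta,\ \sin\theta$ and $1$ turns \eqref{eq:detcond} into three linear equations that determine $(\alpha,\beta,\gamma)$ uniquely.

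I would then evaluate \eqref{eq:detcond} on each of the seven generators in \eqref{eq:vf}. The computation is routine and, thanks to the repeated appearance of the factor $a\cos\theta+b\sin\theta+c$, collapses dramatically: $r\partial_r$ yields $-a\partial_a-b\partial_b-c\partial_c$, the rotation $\partial_\theta$ yields $-b\partial_a+a\partial_b$, the fields $r\partial_x$ and $r\partial_y$ yield the boosts $-a\partial_c-c\partial_a$ and $-b\partial_c-c\partial_b$, and — most strikingly — the three incomplete fields $-x r\partial_r,\ -y r\partial_r,\ -r^2\partial_r$ collapse to the \emph{constant} fields $\partial_a,\ \partial_b,\ \partial_c$. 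This reproduces the list \eqref{eq:dualvf} verbatim and in the stated order. Recognising the output finishes the proof: the three fields $-b\partial_a+a\partial_b,\ -a\partial_c-c\partial_a,\ -b\partial_c-c\partial_b$ span $\mathfrak{so}(2,1)$ for the form $a^2+b^2-c^2$, adjoining the scaling field $-a\partial_a-b\partial_b-c\partial_c$ gives $\mathfrak{co}_{2,1}=\mathfrak{so}(2,1)\oplus\R$, and $\partial_a,\partial_b,\partial_c$ span the translations $\Rto$; together they span the Lie algebra of $\COto\ltimes\Rto$. Since every field in \eqref{eq:dualvf} is linear or constant, hence complete on all of $\Rto$, the merely local action on $\Rot_+$ integrates to a global action of the identity component of $\COto\ltimes\Rto$ — in particular the generators that were incomplete on the Kepler plane become complete on the dual side.

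The one step requiring genuine care, rather than mere calculation, is the justification of the construction \eqref{eq:detcond}: one must check that a symmetry's flow really descends to a well-defined flow on the parameter space, and that \eqref{eq:detcond} admits a solution $(\alpha,\beta,\gamma)$ depending on $(a,b,c)$ alone. The latter is precisely the statement that the three Fourier modes in $\theta$ can be matched simultaneously, which is guaranteed exactly because $X$ is a bona fide orbital symmetry (for a non-symmetry the three equations would be overdetermined and inconsistent). Once this framework is in place, the theorem follows from the seven short computations and the identification of the resulting algebra.
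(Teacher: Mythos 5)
Your proposal is correct: I verified the incidence computation $X(F)+\hat X(F)=0$ on $\{ax+by+cr=1\}$ for all seven generators (e.g.\ for $r\partial_r=x\partial_x+y\partial_y$ one gets $X(F)=ax+by+cr=1$, forcing $(\alpha,\beta,\gamma)=-(a,b,c)$; for $-xr\partial_r$ one gets $X(F)=-x(ax+by+cr)=-x$, forcing $\hat X=\partial_a$), and your identification of the resulting algebra as $\mathfrak{co}_{2,1}\ltimes\R^3$ together with the completeness argument for the affine fields is sound. However, your route is genuinely different from the paper's. The paper works at the group level with the projective model: an element $g\in\tOS\subset\GL_4(\R)$ acts on planes (row vectors) by $p\mapsto pg^{-1}$, and in the affine chart $\p\mapsto(\p:-1)$ this dual action reads $\p\mapsto(\lambda\p+\b^t)A^{-1}$ with $A\in\COto$, $\b\in\R^3$, $\lambda\neq 0$ --- so the identification with the Minkowski similarity group, the extension to all of $\Rto$, and the globality of the action are all immediate from projective duality, with the infinitesimal generators obtained afterwards by differentiating along one-parameter subgroups of $\tOS$. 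Your proof instead stays entirely in the affine picture and works infinitesimally: it takes the seven plane fields of Theorem \ref{thm:main1} as input and extracts their duals one at a time from the incidence variety. What your approach buys is elementary self-containedness --- no homogeneous coordinates, no classification of $\tOS$, and the pleasant collapse of the incomplete fields to constant translations is made explicit. What it costs is exactly the two points you flag: you must separately justify that the flow descends smoothly to the orbit space (in the paper this is automatic, since the orbit-space action is manifestly the restriction of a linear action on the dual projective space), and you must invoke completeness of the affine fields plus an integration argument (Palais) to recover the global action of the identity component of $\CO_{2,1}\ltimes\Rto$, which the paper gets for free at the group level. Both of these gaps are genuinely fillable as you sketch, so the proof stands, but be aware that the well-definedness step deserves more than a remark: one should note that $(a,b,c)$ depends smoothly on the orbit (e.g.\ it is solved from the linear system given by three points on the orbit), so that the descended flow is indeed a smooth local flow whose generator your three Fourier-mode equations then compute.
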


The first vector field generates dilations in $\Rto$, the next 3 generate Minkowski rotations
 about the origin  and the last 3 generate translations. It follows that orbital symmetries actually `mix' the orbit types (ellipses, parabolas, hyperbolas).
 
The horizontal plane $\{c=0\}\subset\Rto$ corresponds to `ideal' Kepler orbits which are inevitably added upon completing the orbital symmetry action. For $(a,b,0)\neq (0,0,0)$ they are (affine) lines in $\R^2\setminus 0$, obtained by projecting to the $xy$ plane sections of $\cC$ by vertical affine 2-planes in $\R^3$. The point $(0,0,0)\in\Rto$ corresponds to the `line at infinity' in the Kepler plane.  

\ssec{Sketch of proof of Theorems \ref{thm:main1} and \ref{thm:main2}}\label{sec:sketch}
With Figure \ref{fig:conics} in mind, consider the group $\COto\subset\GL_3(\R)$, preserving the quadratic form $x^2+y^2-z^2$ up to scale. Its identity component   acts on  $\cC_+:=\{x^2+y^2=z^2, z>0\}$, preserving its set of plane sections, thus projects  to  an action on $\R^2\setminus 0$ by orbital symmetries.  This accounts for the first 4 vector fields of Equation \eqref{eq:vf}.

Next, consider the 3-dimensional projective space $\RP^3$ with homogeneous coordinates $(X:Y:Z:W)$ and embed $\R^3\hookrightarrow\RPt$ as the affine chart $W\neq 0$, $(x,y,z)\mapsto (x:y:z:1).$ The closure of $\cC$ in $\RPt$,  $\ccC=\{(X:Y:Z:W)\st X^2+Y^2=Z^2\}$, is obtained by adding to  $\cC$ the `circle at infinity' $S^1_\infty=\{X^2+Y^2=Z^2, W=0\}$. See Figure \ref{fig:cylinders}. Now consider the group  $\tOS\subset \GL_4(\R)$,  preserving the (degenerate) quadratic  form $X^2+Y^2-Z^2$, up to scale. A simple calculation (see Section \ref{sec:proofs} below) shows
that  $\tOS$ is an 8-dimensional group, thus its image $\OS=\tOS/\R^*\subset \PGL_4(\R)$ is $7$-dimensional, acting effectively on $\ccC$, preserving its set of (projective) plane sections. It leaves invariant the set of sections by planes {\em not} passing through the vertex of $\ccC$, parametrized by $\Rto$. 
The action restricts to a local action on $\cC_+\subset \ccC$, then projects to a local  action on $\R^2\setminus 0$ by orbital symmetries. Equations \eqref{eq:vf} and \eqref{eq:dualvf}  
follow easily from this  description.   

Finally, we use a basic result of Lie's theory of symmetries  of ODEs (reviewed in the Appendix), according to which the maximum dimension of the group of  point symmetries of a 3rd order ODE is 7, thus the above construction provides the full group of orbital symmetries of the Kepler problem.  See Section \ref{sec:proofs} below for the full details.

\begin{figure}[!htb]
        \centering
    \includegraphics[width=\textwidth]{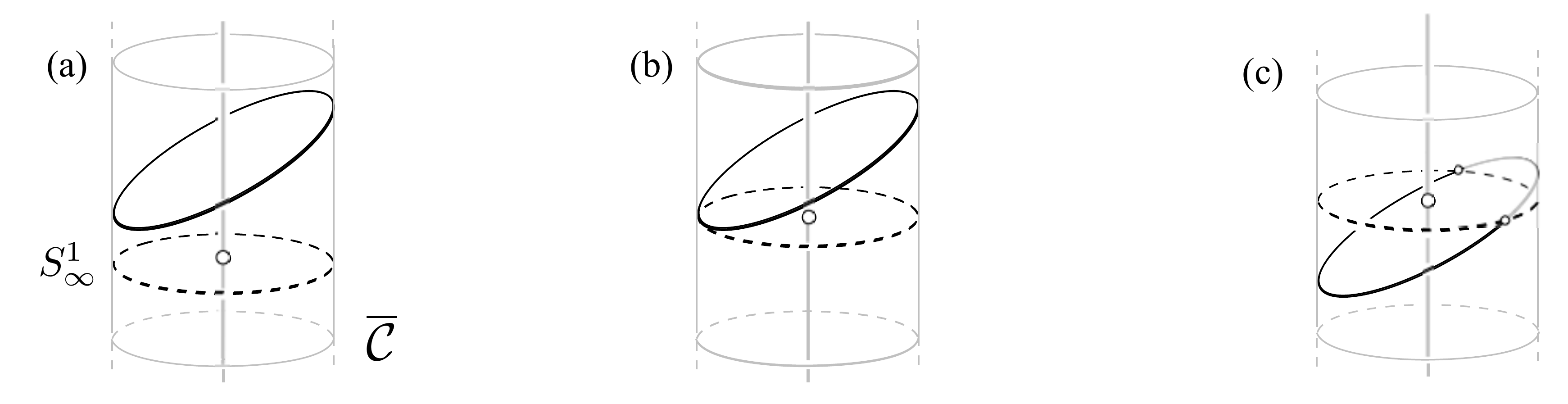}
        \caption{ In the  affine chart $\{Z\neq 0\}\subset \RP^3$, the  cone  $\ccC=\{(X:Y:Z:W)|    X^2+Y^2=Z^2\}\subset\RP^3$ appears as an infinite vertical cylinder, its   vertex $(0:0:0:1)$ is `at infinity' and  the `circle at infinity' $S^1_\infty=\ccC\cap\{  W=0\}$ is visible (the dotted horizontal circle). A   conic section may intersect $S^1_\infty$ in 0,  1 or  2 points; the  corresponding Kepler orbit is an ellipse, parabola or hyperbola, as in  figures (a), (b) or (c), respectively.  In the hyperbolic case (c), $S^1_\infty$ divides the section into two `branches'; the Kepler orbit corresponds to the branch whose convex hull intersects the vertical axis (the dark arc of the solid ellipse). }
        \label{fig:cylinders}
  \end{figure}

\ssec{2-parameter subfamilies} 
The simplest example of a 2-parameter family of plane curves  (also called a `path geometry')  is the family of straight lines. It admits an 8-dimensional local group of symmetries (the projective group), the maximum dimension possible for a 2-parameter family of plane curves. A 2-parameter  family of plane curves locally diffeomorphic to this family is called {\em flat}. There are no straight lines among Kepler orbits, but there are flat 2-parameter subfamilies.  

\begin{theorem}\label{thm:parab}Kepler's parabolas form a  flat 2-parameter family of curves. The map $\z\mapsto \z^2$ (in complex notation) is a local diffeomorphism  taking straight affine lines to Kepler parabolas. 
\end{theorem}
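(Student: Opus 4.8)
The plan is to exhibit an explicit local diffeomorphism of $\R^2\setminus 0$ that carries the family of straight lines to the family of Kepler parabolas. Since ``flat'' means, by definition, locally diffeomorphic to the family of straight lines, this proves both assertions at once. The candidate is the complex squaring map: writing the Kepler plane coordinate as $\z=x+iy=re^{i\theta}$ and the source-plane coordinate as $\w$, the map is $\z=\w^2$ (this is the map written $\z\mapsto\z^2$ in the statement, with the source variable renamed). Its complex derivative is $2\w$, nonzero on $\w\neq 0$, so it is a genuine local diffeomorphism of $\R^2\setminus 0$ (globally two-to-one, since $\w$ and $-\w$ have the same square). The entire content of the theorem is therefore the single geometric claim that this map sends lines to parabolas \emph{having their focus at the origin}.

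To verify that claim cleanly I would avoid parametrizing and instead work from the focal polar equation. Recall (Section \ref{sec:reminder}) that a Kepler parabola with focus at the origin, axis direction $\theta_0$ and semi-latus rectum $\ell>0$ is
\be
r=\frac{\ell}{1+\cos(\theta-\theta_0)}.
\ee
Using the half-angle identity $1+\cos\phi=2\cos^2(\phi/2)$, this is equivalent to
\be
\sqrt r\,\cos\!\left(\tfrac{\theta-\theta_0}{2}\right)=\sqrt{\ell/2}.
\ee
Now choose the branch of the square root $\w=\sqrt\z=\sqrt r\,e^{i\theta/2}$; then the left-hand side is exactly $\Re\!\big(e^{-i\theta_0/2}\w\big)$, so the parabola is the image under $\z=\w^2$ of
\be
\Re\!\big(e^{-i\theta_0/2}\w\big)=\sqrt{\ell/2},
\ee
which is a straight line in the $\w$-plane, with unit normal $e^{i\theta_0/2}$ at distance $\sqrt{\ell/2}$ from the origin. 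Conversely every line not through the origin has this normal form, so $\z=\w^2$ sets up a (local) bijection between lines and Kepler parabolas, with $\theta_0=2\alpha$ relating the axis direction to the line's normal angle $\alpha$. Lines through the origin, the exceptional locus, map to rays (``collision orbits''), consistent with restricting to $\R^2\setminus 0$.

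The step I expect to be the real point -- rather than an obstacle -- is precisely the focus computation: it is easy to see that $\z=\w^2$ sends a line to \emph{some} conic (indeed, parametrizing the line as $\w(t)=\w_0+\w_1 t$ gives $\z(t)=(\w_0+\w_1 t)^2$ with constant second derivative $2\w_1^2$, manifestly a parabola), but identifying the focus with the origin is the substantive assertion, and the half-angle manipulation above is what makes it transparent. As a fallback I would do the direct computation, reducing a general line to the normal form $\Re(\w)=1$ by a rotation and dilation in the $\w$-plane -- which induce a rotation and dilation about the origin in the $\z$-plane, hence preserve the class of focus-at-origin parabolas -- and then checking the single representative: $\w=1+it$ gives $\z=1-t^2+2it$, i.e.\ $y^2=-4(x-1)$, a parabola with vertex $(1,0)$ and focus $(0,0)$. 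Finally, since straight lines form the flat model family and $\z=\w^2$ is a local diffeomorphism taking them onto Kepler parabolas, the Kepler parabolas form a flat family, completing the proof.
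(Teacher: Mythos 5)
Your proposal is correct and follows essentially the same route as the paper: both use the squaring map $\z\mapsto\z^2$, and your ``fallback'' computation (reduce a line to $\Re(\w)=1$ by rotation--dilation equivariance, then check that $\w=1+it$ maps to a parabola with focus at the origin) is precisely the paper's lemma and its proof. The polar-coordinate half-angle verification you give first is a pleasant extra confirmation but not a different method.
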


This theorem is essentially known. The  squaring map $\z\mapsto \z^2$, in the context of the Kepler problem, is  known sometimes as the  {\em Levi-Civita} or  {\em Bohlin} map. It can  be  used to define a local orbital equivalence between  Hooke and Kepler orbits (see e.g. Appendix 1 of  \cite{Ar}).

\begin{theorem}\label{thm:angmom}Kepler's orbits with fixed angular momentum  $\pm M\neq 0$ form a flat 2-parameter family of curves. The map $\r\mapsto \r/(1-r/M^2)$ takes  Kepler orbits with angular momentum $M$ to straight  lines. 
\end{theorem}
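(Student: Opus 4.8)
The plan is to verify the explicit map directly in polar coordinates, after which flatness follows immediately from the definition given before Theorem \ref{thm:parab}. First I record the polar equation of a Kepler orbit of fixed angular momentum. Since the latus rectum equals $2M^2$, the semi-latus rectum is $M^2$, so such an orbit has the form
\[
\frac1r=\frac{1}{M^2}\bigl(1+e\cos(\theta-\theta_0)\bigr),
\]
the two free parameters being the eccentricity $e\ge 0$ and the perihelion angle $\theta_0$ (equivalently the energy and the orientation). The proposed map $\r\mapsto\w:=\r/(1-r/M^2)$ is \emph{radial}: it fixes the polar angle $\theta$ and multiplies the radius $r$ by $M^2/(M^2-r)$. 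Its whole virtue is that it acts affinely on $1/r$, namely $1/\rho=1/r-1/M^2$, where $\rho$ denotes the new radius.

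The key step is then a one-line substitution. Subtracting the constant $1/M^2$ from the orbit equation yields
\[
\frac{1}{\rho}=\frac{e}{M^2}\cos(\theta-\theta_0)=A\cos\theta+B\sin\theta,\qquad (A,B)=\frac{e}{M^2}\,(\cos\theta_0,\sin\theta_0),
\]
which is precisely the polar equation of the affine line $Ax+By=1$ (since $\rho(A\cos\theta+B\sin\theta)=1$ reads $Aw_x+Bw_y=1$). As $(e,\theta_0)$ ranges over the family, $(A,B)$ ranges over $\R^2\setminus 0$, so the orbits are carried onto the $2$-parameter family of all affine lines missing the origin. Because this target is the flat family of straight lines, it remains only to confirm that $\r\mapsto\w$ is a local diffeomorphism: the radial profile $r\mapsto\rho=r/(1-r/M^2)$ has derivative $M^4/(M^2-r)^2$, so off the circle $r=M^2$ the planar map has nonvanishing Jacobian, establishing flatness.

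The one point requiring care is exactly that singular circle $r=M^2$, which is the locus of the latus-rectum points: there the map blows up, sending those points to the points at infinity of the corresponding line. Hence the equivalence is genuinely only local, and one restricts to an arc lying on one side of $r=M^2$. I would close by recording the conceptual source of the formula, which also gives the theorem a second proof through the projective model. The map is the time-$(-1/M^2)$ flow of the incomplete orbital symmetry $-r^2\partial_r$ from \eqref{eq:vf}: its integral curves satisfy $1/r(t)=1/r_0+t$, reproducing $\r\mapsto\r/(1-r/M^2)$ at $t=-1/M^2$. Under the duality of Theorem \ref{thm:main2} this vector field corresponds to the vertical translation $\partial_c$ on $\Rto$, while fixing the angular momentum $M$ amounts to fixing $c=1/M^2$, the constant term in $1/r=a\cos\theta+b\sin\theta+c$. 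The flow therefore translates the horizontal plane $\{c=1/M^2\}$ to $\{c=0\}$, whose sections project to straight lines, recovering the result as a direct instance of the general symmetry machinery.
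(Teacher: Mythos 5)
Your proof is correct, and its computational core is a more elementary route than either of the two proofs the paper gives. The paper proves this theorem twice: first geometrically, noting (Corollary \ref{cor:M}) that fixed-$M$ orbits are projections of sections of $\cC$ by planes through $(0,0,M^2)$, so that central projection from that point turns them into lines; second, by invoking Theorem \ref{thm:main2} to produce the group element with $A=\mathrm{id}$, $\b=(0,0,-1/M^2)$, which translates the plane $\{c=1/M^2\}\subset\Rto$ to $\{c=0\}$, and reading off its action on the Kepler plane from Equation \eqref{eq:action}. Your substitution $1/\rho=1/r-1/M^2$ is exactly the coordinate shadow of that translation -- under the duality $1/r=a\cos\theta+b\sin\theta+c$, subtracting $1/M^2$ from $1/r$ is $c\mapsto c-1/M^2$ -- and your closing paragraph reconstructs the paper's second proof essentially verbatim. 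What your direct verification buys is self-containedness: it needs only the focal equation of a conic with semi-latus rectum $M^2$ and makes no logical appeal to Theorems \ref{thm:main1}--\ref{thm:main2}, which is worth something since the paper's second proof does lean on that machinery. What the paper's route buys is an explanation of where the formula comes from, and a cleaner disposal of the singular circle $r=M^2$: since the map is the restriction of a projective transformation of $\RPt$ preserving $\ccC$, the latus-rectum points are not an obstruction to flatness but are simply carried to the line at infinity, whereas your argument must excise that circle and work on one side of it.

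Two small inaccuracies to repair, neither fatal. First, for $r>M^2$ the map is not radial in your sense: $1-r/M^2<0$, so the image lies on the opposite ray and your ``new radius'' $\rho$ is negative; the statement $1/\rho=1/r-1/M^2$ uses the signed-radius convention. The sign-proof identity, valid on both sides of the circle, is that $\w=\r/(1-r/M^2)$ satisfies $Aw_x+Bw_y=(Ax+By)/(1-r/M^2)=1$ directly from the orbit equation. Second, the identification of the map with the time-$(-1/M^2)$ flow of $-r^2\partial_r$ is literally true only on $\{r<M^2\}$: the field is incomplete, and for $r_0>M^2$ the backward solution $1/r(t)=1/r_0+t$ escapes to infinity before time $-1/M^2$, so on the outer region the formula is the projective continuation of the flow rather than the flow itself.
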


See Section \ref{sec:reminder} for a reminder about the angular momentum   (also Figure \ref{fig:kepler_orbits}). The proof of this theorem is particularly simple using the geometry of the space $\Rto$ of Kepler orbits: the family of Kepler orbits 
with fixed $|M|$ is represented  in $\Rto$ by a horizontal plane; a vertical translation in this space, which according to Theorem \ref{thm:main2} is available as an orbital symmetry, maps this plane to the plane $c=0$, parametrizing lines in the $xy$-plane. 

Next we consider Kepler orbits with fixed energy $E\neq 0.$  These fill up a plane region $\cH_E$, the {\em Hill region}. For $E\ge 0$ (Kepler hyperbolas with  major axis  $1/E$ or Kepler parabolas) the Hill region  is  the whole punctured plane,  for $E<0$ (Kepler ellipses with  major axis $1/|E|$) it is a punctured disk of radius $1/|E|$. See Figure \ref{fig:hill}.

\begin{figure}[!htb]
        \centering
    \includegraphics[width=\textwidth]{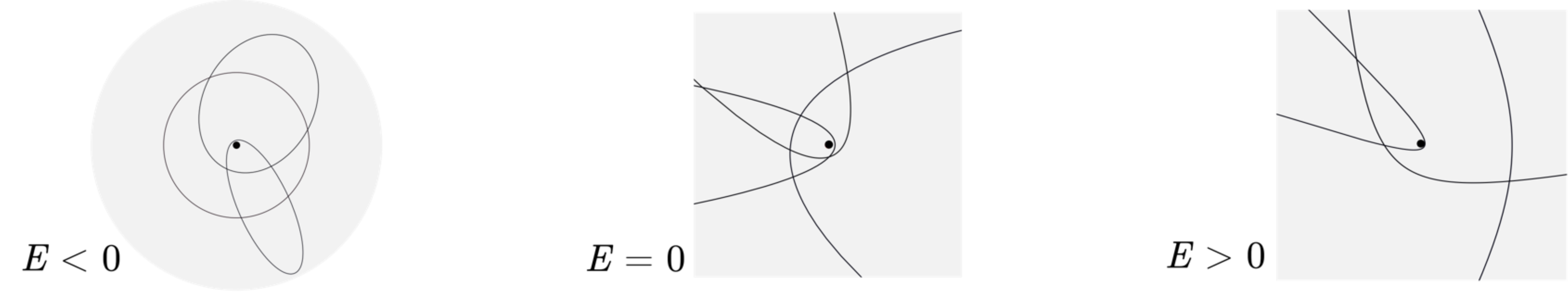}
        \caption{Kepler orbits of fixed energy $E$ fill up the Hill region $\cH_E$. 
        }
        \label{fig:hill}
  \end{figure}

\begin{theorem}\label{thm:hill1} \benum
\item For each  fixed energy $E\neq  0$,  the 2-parameter family of Kepler orbits with energy $E$ is non flat but is locally homogeneous: its orbital symmetry group is a 3-dimensional subgroup of the 7-dimensional group of Kepler's orbital symmetries, isomorphic to $\PSLt$ and generated by the infinitesimal symmetries  
\be\label{eq:fe}\partial_\theta, r(\partial_x + Ex\partial_r), r(\partial_y +E y\partial_r).
\ee

\item  For $E<0$ the action of $\PSLt$ on the Hill region $\cH_E$ is global; for $E>0$  it is only local. 
\end{enumerate}
\end{theorem}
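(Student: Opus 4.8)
The plan is to read the fixed-energy family off the orbit-space picture of Subsection~\ref{sec:orbitspace} and recognize it as a Minkowski pseudosphere on which the symmetries of Theorem~\ref{thm:main2} act by isometries. First I would locate the energy in the coordinates $(a,b,c)$. Writing a Kepler orbit as the section $cr=1-ax-by$ of the cone and comparing with the focal form $r=p/(1+e\cos(\theta-\theta_0))$ gives semilatus rectum $p=1/c$ and eccentricity $e=\sqrt{a^2+b^2}/c$; substituting into the standard relation $E=(e^2-1)/(2p)$ (Section~\ref{sec:reminder}) yields the clean identity
\[
\|(a,b,c)\|^2=2Ec,\qquad\text{equivalently}\qquad \|(a,b,c)-\v_0\|^2=-E^2,\quad \v_0=(0,0,-E).
\]
Thus the energy-$E$ orbits form a two-sheeted hyperboloid $\Sigma_E\subset\Rto$, the Minkowski pseudosphere of timelike radius $|E|$ centered at $\v_0$; each sheet is a hyperbolic plane $\mathbb{H}^2$.

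Next I would identify the symmetries. Among the Minkowski similarities of Theorem~\ref{thm:main2}, those preserving $\Sigma_E$ must fix its center $\v_0$ and preserve its nonzero radius, hence are precisely the Minkowski rotations about $\v_0$ --- a copy of the Lorentz group whose identity component is isomorphic to $\PSLt$ and acts transitively (as orientation-preserving isometries) on each sheet. This at once exhibits a $3$-dimensional group of orbital symmetries inside the $7$-dimensional Kepler group and shows the family is locally homogeneous. To recover the generators~\eqref{eq:fe} I would recenter the three rotation fields of~\eqref{eq:dualvf} at $\v_0$, i.e. conjugate by the translation $\v\mapsto\v+\v_0$: the boost $-a\partial_c-c\partial_a$ becomes $-a\partial_c-c\partial_a-E\partial_a$, which under the correspondence between \eqref{eq:vf} and~\eqref{eq:dualvf} (so that $\partial_a\leftrightarrow -xr\partial_r$, etc.) is precisely $r(\partial_x+Ex\partial_r)$; the field $\partial_\theta$ and the companion boost give the remaining two fields of~\eqref{eq:fe}.

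To finish item (a) I must show this $\slt$ is the \emph{entire} symmetry algebra, i.e. that the family is not flat. Here I would borrow from Lie's theory exactly as in Theorem~\ref{thm:main1}: the point-symmetry algebra of a second-order ODE has dimension $0,1,2,3$ or $8$ (the second-order analogue of the bound used for Theorem~\ref{thm:main1}; see the Appendix), so with the lower bound of $3$ already in hand the algebra is either our $\slt$ or the $8$-dimensional flat one $\slth$. Ruling out the flat case is the crux. I would do this by producing the governing equation: eliminating the two free parameters from $u=c+a\cos\theta+b\sin\theta$ under the constraint $a^2+b^2-c^2=2Ec$ gives, with $u=1/r$, the second-order ODE
\[
u''=\frac{(u')^2-u^2-2Eu}{2(u+E)},
\]
and then checking that it is not point-equivalent to $u''=0$ --- equivalently, matching it to the non-flat $\PSLt$-item of Tresse's 1896 list. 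This is the step I expect to be the main obstacle, since non-flatness, unlike the explicit flattening diffeomorphisms of Theorems~\ref{thm:parab} and~\ref{thm:angmom}, cannot be seen by exhibiting a map and must be certified by a nonvanishing relative invariant (or by the classification).

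Finally, item (b) is a completeness count on the sheets of $\Sigma_E$. Solving $(c+E)^2=a^2+b^2+E^2$ shows that for $E<0$ the physical condition $c>0$ selects the entire upper sheet, whose vertex sits at $c=2|E|>0$: a complete hyperbolic plane preserved by $\PSLt$, so the action, and hence the flows of~\eqref{eq:fe}, are global on the Hill region $\cH_E$. For $E>0$ the relevant sheet instead dips to its vertex $(0,0,0)$, which has $c=0$ and is the ideal ``line at infinity'' orbit of Theorem~\ref{thm:main2}, not a genuine Kepler orbit; the rotation $\partial_\theta$ fixes this vertex but the two boosts move it, so they fail to preserve the punctured sheet of genuine orbits and the action is only local. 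I expect the only delicate point here to be the bookkeeping of this single missing orbit.
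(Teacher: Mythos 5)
Your route is essentially the paper's: you read the fixed-energy family as a pseudosphere in the orbit space $\Rto$ (Corollary \ref{cor:E}; your identity $\|(a,b,c)\|^2=2Ec$ and the recentered hyperboloid agree with the paper's up to the harmless reflection $c\mapsto -c$ in the convention for $E>0$), you identify the symmetries as the Lorentz stabilizer of that pseudosphere inside the similarity group of Theorem \ref{thm:main2}, and your derivation of the generators \eqref{eq:fe} by recentering the rotational fields of \eqref{eq:dualvf} at $\v_0$ is a correct, and in fact tidier, alternative to the paper's matrix computation with the subalgebra \eqref{eq:XX} preserving the inscribed paraboloid $\cP$. The genuine gap is the one you flag yourself: non-flatness is never certified, and without it item (a) is not proved --- the dimension count only tells you the symmetry algebra is either your $\slt$ or the $8$-dimensional flat one. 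This is not a side issue the paper waves away; it is exactly Proposition \ref{prop:flatE} of the Appendix. The good news is that your ODE is literally the paper's: $u''=\left((u')^2-u^2-2Eu\right)/\left(2(u+E)\right)$ coincides with $\rho''=(\rho^2+\rho'^2)/(2(\rho+E))-\rho$, and substituting it into the relative invariants \eqref{eq:I} gives $I_1=0$ and $I_2=9E^2/(E+\rho)^3\neq 0$ for $E\neq 0$. So your plan closes with one mechanical computation; the alternative you also mention (matching to Tresse's normal form $y''=(xy'-y)^3$) is precisely the route the paper describes as ``not so easy'' and deliberately avoids.

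A second, smaller gap is in item (b) for $E<0$: from ``the full sheet is preserved and is a complete hyperbolic plane'' you conclude that ``the action, and hence the flows of \eqref{eq:fe}, are global on $\cH_E$.'' That implication is not formal. A global action on the set of orbits does not by itself yield a globally defined action on points: one must check that the image of the pencil of energy-$E$ orbits through a point of $\cH_E$ is again such a pencil, and not, say, the pencil attached to a lift in the other half-cone $\cC_-$ or to an ideal point (dually: that the image of the corresponding parabolic plane is again dual to a point over the Hill region, with the correct lift). The paper's argument lives upstairs, where this is immediate: in the chart of Figure \ref{fig:huevo}(a) the lift of $\cH_E$ to $\ccC$ is one of the two components of the cylinder minus its circle of tangency with the inscribed quadric $\ccP$; this set is invariant under the globally defined projective action of the identity component of $\OS_E$ and projects diffeomorphically onto $\cH_E$, so globality follows. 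Your argument for the $E>0$ locality --- a boost moves the vertex, i.e.\ sends a genuine orbit to the ideal ``line at infinity'' --- is correct as stated.
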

This theorem is also essentially known, or at least can be deduced  easily by experts on `superintegrable metrics' from known  results from  the 19th centrury by S. Lie and G. Koenigs  (see for example \cite{BMM} and references within; we thank V. Matveev for pointing out to us this relation).

 Our proof  of this theorem is  quite simple   using the geometry of the space of orbits $\Rto$: as we explain in Section \ref{sec:reminder}, orbits of fixed energy  $E$ correspond to one of the sheets of the hyperboloid of two sheets $a^2+b^2-(c-|E|)^2=-E^2$ (the upper sheet for $E<0$, the lower one for $E>0$). See Figure \ref{fig:parabo}(iii). The Minkowski metric in $\Rto$ restricts to a hyperbolic metric in each of these sheets,  the subgroup of 
   $\OS\simeq \CO_{2,1}\ltimes\Rto$ preserving the hyperboloid acts as the full group of isometries of this metric, with generators given by Equation \eqref{eq:fe}. 
   
\begin{figure}[!htb]
        \centering
        \includegraphics[width=\textwidth]{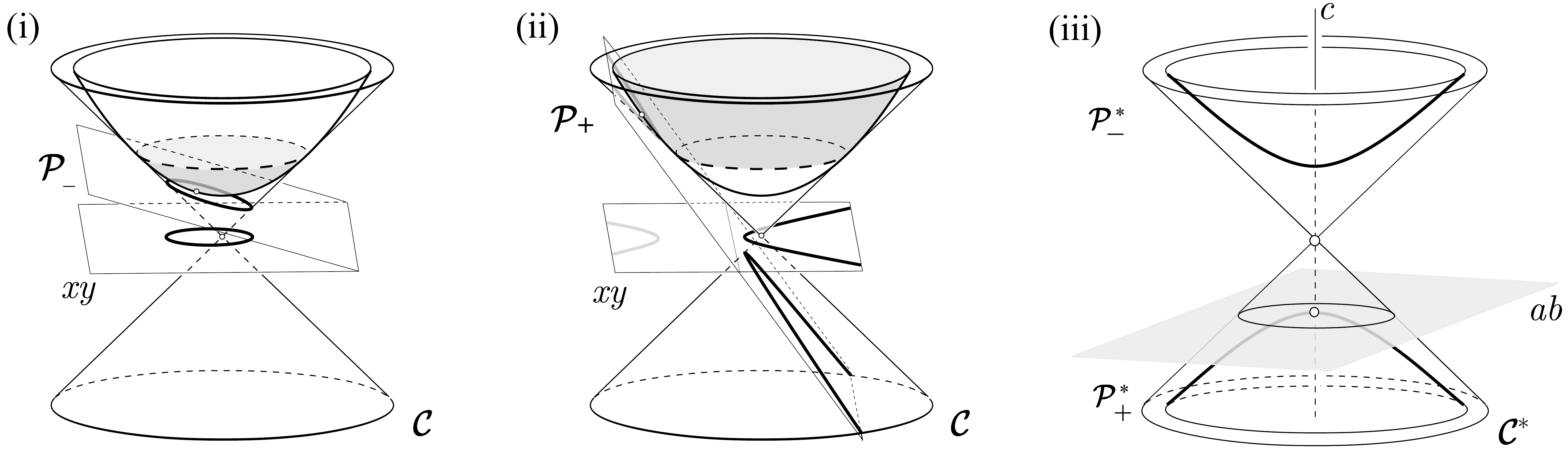} \qquad
                \caption{ Theorem  \ref{thm:hill1}. 
                (i) Kepler ellipses of fixed (negative) energy  correspond to sections of  $\cC$ by planes tangent to the lower part  of a fixed paraboloid of revolution $\cP$ inscribed in $\cC$. (ii) Kepler hyperbolas  of the opposite (positive) energy correspond to sections  of $\cC$ by planes tangent to the upper  part of $\cP$. (iii) The surface dual to $\cP$  is  a 2-sheeted hyperboloid of revolution  $\cP^*\subset \Rto$ tangent to the $c=0$ plane. Its upper and lower sheets correspond  to $\cP_-$ and  $\cP_+$, respectively. 
                }
        \label{fig:parabo}
  \end{figure}

Any two Hill regions  with the same sign of energy are obviously orbitally equivalent by dilation. For opposite signs of energies this is still true but less obvious. 
\begin{theorem}\label{thm:hill2} 
 $\cH_1$ is orbitally embedded in $\cH_{-1}$ by the map  $\r\mapsto \r/(1+2r).$ See Figure \ref{fig:emb}. 
\end{theorem}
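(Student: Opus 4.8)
The plan is to recognize the map $\r\mapsto\r/(1+2r)$ as the time-$2$ flow of the last infinitesimal symmetry $-r^2\partial_r$ in \eqref{eq:vf}, and then to read off its effect on orbits through the duality of Theorem \ref{thm:main2}. First I would integrate the planar flow of $-r^2\partial_r$: it fixes $\theta$ and solves $\dot r=-r^2$, so the trajectory starting at $r_0$ is $r(t)=r_0/(1+tr_0)$, and at $t=2$ this is exactly $\r\mapsto\r/(1+2r)$. In particular the map is itself an orbital symmetry (a flow of a field in \eqref{eq:vf}), hence automatically carries Kepler orbits to Kepler orbits; the only real task is to identify which orbits go where.

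To do that I would pass to the orbit space $\Rto$. By the order-for-order correspondence in Theorem \ref{thm:main2}, the seventh field $-r^2\partial_r$ of \eqref{eq:vf} is dual to the seventh field $\partial_c$ of \eqref{eq:dualvf}, so the time-$2$ flow above is dual to the vertical translation $(a,b,c)\mapsto(a,b,c+2)$ of $\Rto$. From the projective model (Section \ref{sec:reminder}; cf.\ Theorem \ref{thm:hill1}) the orbit indexed by $(a,b,c)\in\Rto_+$ is the section curve $ax+by+cr=1$, and its energy is pinned down by $\|(a,b,c)\|^2=a^2+b^2-c^2=2Ec$, the normalization placing ellipses ($E<0$) inside, hyperbolas ($E>0$) outside, and parabolas on the cone, exactly as in Section \ref{sec:orbitspace}. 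Thus energy-$1$ orbits fill the sheet $a^2+b^2-(c+1)^2=-1$ (vertex $c=0$) and energy-$(-1)$ orbits fill the sheet $a^2+b^2-(c-1)^2=-1$ (vertex $c=2$). A one-line identity closes the identification: if $a^2+b^2-c^2=2c$ and $c'=c+2$, then $a^2+b^2-c'^2=(a^2+b^2-c^2)-4c-4=-2c-4=-2c'$, which is precisely the energy-$(-1)$ relation. Hence $\r\mapsto\r/(1+2r)$ sends energy-$1$ orbits to energy-$(-1)$ orbits.

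It remains to verify the embedding statement. Since $E=1>0$, the Hill region $\cH_1$ is all of $\R^2\setminus 0$, and the radial profile $r\mapsto r/(1+2r)$ has positive derivative $(1+2r)^{-2}$ and maps $(0,\infty)$ bijectively onto $(0,\tfrac12)$; being angle-preserving, the full map is a diffeomorphism of $\cH_1$ onto the punctured disk of radius $\tfrac12$. Because $\cH_{-1}$ is the punctured disk of radius $1/|-1|=1>\tfrac12$, this image sits inside $\cH_{-1}$, and each energy-$1$ orbit lands on an arc of an energy-$(-1)$ orbit. This is exactly an orbital embedding $\cH_1\hookrightarrow\cH_{-1}$.

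The one point needing care is the numerical constant. Everything hinges on seeing that raising the energy-sheet vertex from $c=0$ to $c=2$ is the translation $c\mapsto c+2$, which, through the duality of Theorem \ref{thm:main2}, forces the flow time $t=2$ and therefore the precise denominator $1+2r$ in the statement. Once this matching is in place, the remainder is a short ODE together with a one-line algebraic identity.
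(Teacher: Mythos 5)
Your proof is correct, and it takes a genuinely different route from the paper's, even though both work in the orbit space $\Rto$. The paper argues from the group element to the planar formula: it parametrizes energies $\pm 1$ by the two sheets of a \emph{single} hyperboloid $\cP^*$ (positive-energy hyperbolas on the lower sheet, with $c\le 0$, following the convention of Theorem \ref{thm:EM}(a)), realizes the sheet interchange as the \emph{reflection} $(a,b,c)\mapsto(a,b,2-c)$ about the plane $c=1$, exhibits the matrix $g\in\tOS$ inducing it, and only then extracts the planar map via the composition $\r\mapsto(\r,-r)\mapsto(\r,r)/(1+2r)\mapsto\r/(1+2r)$, i.e.\ lifting into $\cC_-$, applying $g$, and projecting back from $\cC_+$; the branch/sheet bookkeeping is the substance of that computation. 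You argue from the planar formula to the group element, and with a different group element: normalizing every orbit by $c>0$, you recognize $\r\mapsto\r/(1+2r)$ as the time-$2$ flow of the generator $-r^2\partial_r$ of \eqref{eq:vf}, use the order-for-order correspondence of Theorem \ref{thm:main2} to identify its action on $\Rto$ as the \emph{translation} $c\mapsto c+2$, and close with the one-line identity showing this translation carries the energy-$1$ locus $\{a^2+b^2-c^2=2c,\ c>0\}$ into the energy-$(-1)$ locus $\{a^2+b^2-c^2=-2c,\ c>0\}$. The two descriptions are consistent: under the identification $(a,b,c)\sim(a,b,-c)$, which fixes every orbit, the paper's reflection becomes exactly your translation, so they are the same transformation of orbit space written in different conventions. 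As for what each buys: your route is shorter and eliminates all $\cC_\pm$, $\cP_\pm$ bookkeeping, needing only an elementary ODE, Theorem \ref{thm:main2} and Equation \eqref{eq:EM}; the paper's route keeps the geometry visible (the map literally interchanges the two sheets in Figure \ref{fig:parabo}(iii), which is how the introduction describes it) and yields at no extra cost the companion map $\r\mapsto\r/(1-2r)$ sending the repelling branches into $\cH_{-1}$, a byproduct your argument does not produce.
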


Viewed in $\Rto$, where the two Hill regions correspond to the two sheets of a hyperboloid, the map is simply the reflection about a  horizontal plane $c=1$, interchanging the two sheets. See Figure \ref{fig:parabo}(iii).

\begin{figure}[!htb]
        \centering
        \includegraphics[width=\textwidth]{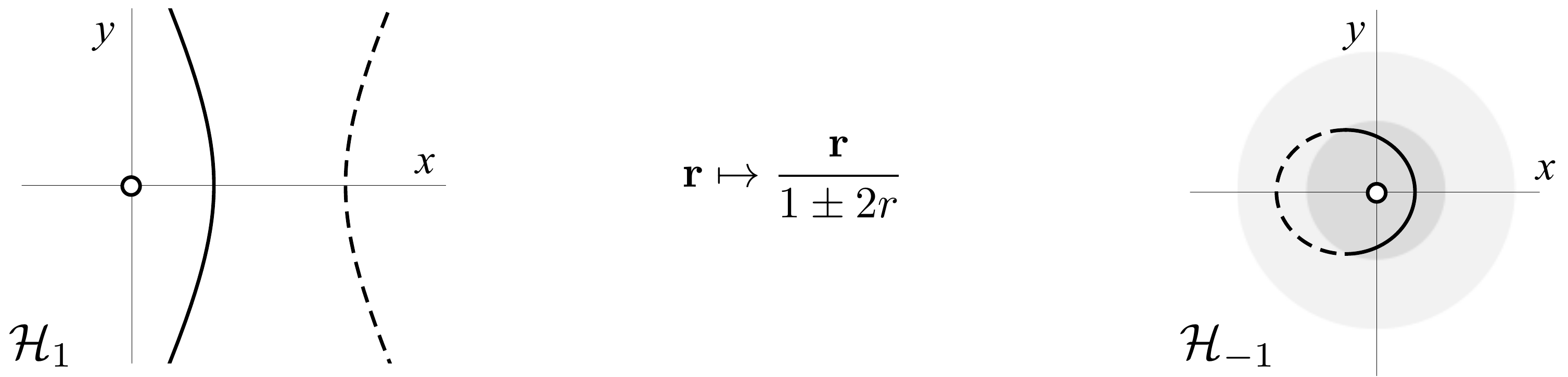} 

        \caption{
        Theorem \ref{thm:hill2}. The image of $\cH_1=\R^2\setminus 0$ (left) under $\r\mapsto\r/(1+2r)$ is the darker punctured disk of radius 1/2 in $\cH_{-1}=\{0<x^2+y^2<1\}$ (right). Each hyperbolic  orbit in $\cH_{1}$ (the solid curve) is mapped onto `one half' of an elliptic orbit in $\cH_{-1}$. The map, $\r\mapsto\r/(1-2r)$ maps $\cH_1$ onto the annulus $1/2<x^2+y^2<1$ in $\cH_{-1}$, taking   the `repulsive' branch  (dotted) onto the other half of the ellipse.       
        }
        \label{fig:emb}
  \end{figure}

\ssec{Further results}\label{sec:apps}
\begin{enumerate}[left=0px, itemsep=-3px]
\item We establish a dictionary between  the Minkowski geometry of  the Kepler orbit space $\Rto$ and properties  of  Kepler orbits. For example: a parabolic (or isotropic) plane in $\Rto$ corresponds to the family of Kepler orbits passing through a fixed point. See Table \ref{tab:pd}  of Section \ref{sec:dual}.

 \item We give three illustrations of the usage of this  dictionary: a new proof  of `Kepler's fireworks' (Proposition \ref{prop:fireworks}), a Keplerian analogue of the   4 vertex and  Tait-Kneser theorems (Theorem \ref{thm:TK}) and  a `minor axis version' of Lambert's Theorem  (Theorem \ref{thm:Lambert}).

\item Similar results  to Theorems \ref{thm:main1}-\ref{thm:hill2} hold for orbital symmetries  of  the Hooke problem  -- the set of conics sharing a center (trajectories of mass points under central  force proportional to the distance to the origin), and the orbits of the corresponding `Coulomb' problems, where the sign of the force is reversed, becoming a repelling force. By central projection, our results extend to Hooke and Kepler orbits on surfaces of constant curvature (sphere and  hyperbolic plane). See Table \ref{tab:proj}.

\item We establish a converse to Theorem \ref{thm:main1}:  among all  central forces, Hooke and Kepler force laws are the only ones producing `flat' families of orbits (3 parameter families with a 7-dimensional group of symmetries). See Theorem \ref{thm:flat}. This is reminiscent of  Bertrand's Theorem (1873), characterizing these two  force laws  as the only central force laws  with bound orbits  all of whose bound orbits are  closed \cite{Be}, \cite[page 37]{Ar1}.

\end{enumerate}

\mn \centerline{*\qquad *\qquad *}

 \sn{\bf Techniques.} Other than standard projective and differential geometric constructions, we  use some of the  work of  S.~Lie (1874), A.~Tresse (1896)  and K. W\"unschmann (1905) on point symmetries of  2nd and 3rd order ODEs.  We do not assume the reader's familiarity with  their work. We  summarize in the Appendix  the needed tools of this theory. 

 \sn{\bf Figures.} The figures here were computer generated using Wolfram's Mathematica and Apple's Keynote.

 \mn{\bf Acknowledgment.} 
 We thank Richard Montgomery, Sergei Tabachnikov, Alain Albouy and Vladimir Matveev for fruitful correspondence and discussions. GB was supported by CONACYT Grant A1-S-4588. 

\section{Wider context: `orbital' vs `dynamical' symmetries}
The  Kepler problem is centuries old with an enormous literature.  It is hard to imagine one can add anything new to this problem  in the 21st century.  Yet, new and interesting works continue to appear. See, for example, \cite{AV, Bl, Mo, So, Giv, GS}. 
Some  facts have been  rediscovered several times, centuries apart, especially before the existence of internet search engines. For example,  V.I. Arnol'd  attributes in his  1990 book \cite[Appendix 1]{Ar} the fact that $\z\mapsto \z^2$ maps Hooke orbits to Kepler orbits to  K. Bohlin's 1911 article \cite{B},  then goes on to generalize it to a `duality' between central force power  laws. In fact, all this  appeared in C. Maclaurin's 1742    `Treatise of fluxions'   \cite[Book II, Chap. V,  \S 875]{M} (we thank S. Tabachnikov for pointing out this reference to us). 

One of the most studied  aspects of the Kepler problem are its symmetry properties.  The most obvious symmetries are diffeomorphisms of the plane,  mapping solutions $\r(t)$ of the underlying ODE,  $\ddot\r=-\r/r^3$, to solutions. One can show that these consist only of the rotations about the origin and reflections about lines through the origin, valid for any central force motion. 

More interesting symmetries arise 
when the Kepler problem is considered as a Hamiltonian system, ie a flow defined on its phase space $T^*(\R^2\setminus 0)$. The symplectomorphisms of phase space preserving {\em parametrized} trajectories of this flow form a larger group of symmetries, associated to the Hamiltonian flows of additional conserved quantities such as components of the Laplace-Runge-Lenz vector.  These symmetries generate a (local)  $\SO_3$-action on the open subset of phase space with negative energy. Apart from the lift of the rotation symmetries above, these oft-called `hidden' symmetries do not descend to  an action on the Kepler plane, even locally. The action is rather on phase space, mixing position and momentum variables.
 A good reference for this type of `dynamic' or `phase space' symmetries of the Kepler problem is the book  \cite{GS} or Chapters 3 and 4 of \cite{FvK}.

 In contrast, the symmetries  in this article are  `orbital' symmetries, acting on the  {\em configuration space} of the Kepler problem, $\R^2\setminus 0$, {\em not its phase space}. They are closer to the symmetries one can extract from Albouy's `projective dynamics' papers \cite{AlbPr0, AlbPr}. 
 
So how  original are our results? As far as we can tell, after consulting with experts and searching the literature, our results  are  new.   The articles \cite{AlbPr0, AlbPr, Car} are the nearest in spirit that we found. `Hidden symmetries'  of the Kepler problem, ie of its phase space, have been studied extensively, and  symmetries of 2nd and 3rd order ODEs have been studied extensively as well since the mid 19th century, but it seems that the symmetries of the 2nd and 3rd order ODEs that arise in the Kepler problem have not been studied systematically before, which is the present article's contribution. 

But of course, given the  subject's long and rich history,  it is is still quite possible that at least some of the theorems announced here have been noted before, in some form or another. If some of the readers of this article are aware of such work we will be grateful if they  contact us.

\section{A reminder on the Kepler problem}\label{sec:reminder}
Here we review briefly some  well known facts  about the Kepler problem that will be used in the sequel. See also  \cite{AlbLect,Ar,Ar1,Giv}.

Kepler  orbits are the   {\em unparametrized} plane curves traced by  the solutions of
the ODE
\be \label{eq:kepler}
\ddot \r=-{\r\over r^3},
\ee
where  $ \r=\r(t)=(x(t), y(t))\in\R^2\setminus 0$ and $ r:=\|\r\|=\sqrt{x^2+y^2}.$

The {\em energy}  and {\em angular momentum} of a solution are  
\be\label{eq:conserve}
E:={1\over 2}\|\dot \r\|^2-{1\over r}, \quad M:=x \dot y-  y \dot x,
\ee
 respectively, and can be easily shown to remain constant during the motion. 
 
 Note that 
$M$ is twice the {\em sectorial velocity}, the rate at which  area is swept by the line segment connecting the origin to $\r(t)$. It follows that  $M=0$ if and only if the motion is along a line passing through the origin. Our exclusion of `collision' orbits thus amounts to assuming $M\neq 0.$ 
Note also that although $E$ and $M$ are defined in equation \eqref{eq:conserve} via the time parametrization  of the Kepler orbit, they are in fact determined by the shape of the underlying unparametrized curve (except for the sign of $M$). 
%
See Figure \ref{fig:kepler_orbits}.

A {\em conic} in a Euclidean plane  is the locus of points with  constant  ratio of distances to a fixed  point  and a fixed line. The fixed point,  line and ratio are  called a  {\em focus, directrix} and {\em eccentricity} $e$   (respectively). 
Conics with $e>1$, $e=1$, $0< e <1$ and $e=0$ are hyperbolas, parabolas,  non-circular ellipses and circles (respectively).

Identify   the $xy$ plane  with  the 
plane $z=0$ in   $\R^3$, $(x,y)\mapsto (x,y,0)$. We use the term  `projection' to mean the orthogonal projection $\R^3\to\R^2,$ $(x,y,z)\mapsto (x,y)$.

 \begin{theorem}   \label{thm:EM}
 \benum
 \item Every  Kepler orbit is  the projection   of   a section of  the cone $\cC=\{x^2+y^2=z^2\}\subset \R^3$ by a plane $ax+by+cz=1$, $c\neq 0.$ 
 
 More precisely: if $c>0$ then  the orbit is the projection of the intersection of the plane with the  upper cone $\cC_+:=\cC\cap \{z>0\}$;  if $c<0$ then it is the projection of the intersection of the plane with the  lower cone $\cC_-:=\cC\cap \{z<0\}$. 
 
  \item The projected section is a conic with  a focus at the origin and eccentricity 
\be\label{eq:eccen}
e={\sqrt{a^2+b^2}\over| c|}
\ee

\item  The angular momentum and energy of the projected Kepler orbit are
\be \label{eq:EM}
M= \pm {1\over \sqrt{|c|}}, \quad 
E={a^2+b^2-c^2\over 2|c|}.
\ee

 \end{enumerate}

  \end{theorem}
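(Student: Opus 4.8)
The plan is to reduce the whole statement to the classical polar equation of a focus conic and match coefficients; the only genuinely dynamical input is Kepler's first law, obtained by the elementary integration of \eqref{eq:kepler}. I would begin with the projection. On the upper sheet $\cC_+$ one has $z=r$, so a point of $\cC_+$ lying on the plane $ax+by+cz=1$ satisfies $ax+by+cr=1$; since the projection merely drops the $z$-coordinate, the projected curve is $\{(x,y)\st ax+by+cr=1\}$. In polar coordinates, with $\theta_0=\arg(a+ib)$, this reads
\[
r=\frac{1}{c+a\cos\theta+b\sin\theta}=\frac{1/c}{1+\big(\sqrt{a^2+b^2}/c\big)\cos(\theta-\theta_0)},
\]
the polar equation of a conic with a focus at the origin, semi-latus rectum $p=1/c$ and eccentricity $e=\sqrt{a^2+b^2}/c$ (for $c>0$); this is \eqref{eq:eccen}, giving (b). On the lower sheet $z=-r$ the identical equation appears with $c$ replaced by $-c$, so in both cases $p=1/|c|$ and $e=\sqrt{a^2+b^2}/|c|$.

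For the converse direction in (a) and for (c) I would recall the integration of \eqref{eq:kepler}. Conservation of $M$ and the substitution $u=1/r$ turn the orbit equation into the linear Binet equation $u''+u=1/M^2$ (primes denoting $\d/\d\theta$), whose solution gives
\[
r=\frac{M^2}{1+e\cos(\theta-\theta_0)},
\]
so every Kepler orbit is a focus conic with $p=M^2$; evaluating $E=\tfrac12\|\dot\r\|^2-1/r$ on this solution yields the classical relation $e^2=1+2EM^2$. Matching the two polar forms, a Kepler orbit with invariants $(E,M)$ is realized as a projection by taking $|c|=1/M^2$, $\sqrt{a^2+b^2}=e|c|$ and $a+ib$ in the direction $\theta_0$, proving that every Kepler orbit arises this way and completing (a). Reading the parameters back gives $|M|=\sqrt{p}=1/\sqrt{|c|}$, and eliminating $e$ from $e^2=1+2EM^2$, $e^2=(a^2+b^2)/c^2$ and $M^2=1/|c|$ gives $E=(a^2+b^2-c^2)/(2|c|)$, which is \eqref{eq:EM}.

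I expect the main obstacle to be sign bookkeeping rather than any hard estimate. One must match the sign of $c$ with the correct sheet $\cC_\pm$ so that the projected radius stays positive along the whole curve: for an ellipse ($e<1$) this forces $c+a\cos\theta+b\sin\theta$ to keep a constant sign, while for a hyperbola it selects the physical attracting branch rather than its empty companion --- exactly the content of the ``more precisely'' clause of (a). One must also keep in mind that the sign of $M$ is not fixed by the unparametrized curve, which accounts for the $\pm$ in \eqref{eq:EM}. Everything else is the routine comparison of coefficients between the two polar equations.
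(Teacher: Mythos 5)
Your proposal is correct, but it takes a genuinely different route from the paper's. The paper follows Lagrange's argument, which never integrates the equation of motion: rewriting the radial equation as $\ddot r=-1/r^2+M^2/r^3$, one observes that $r(t)$ and the constant function $M^2$ are two particular solutions of the inhomogeneous linear ODE $\ddot f+f/r(t)^3=M^2/r(t)^3$, so their difference solves the homogeneous equation $\ddot f+f/r(t)^3=0$; since $x(t)$ and $y(t)$ are independent solutions of that homogeneous equation when $M\neq 0$, one gets $r-M^2=Ax+By$ directly, i.e.\ the relation $ax+by+cr=1$, with the value $c=1/M^2$ (hence the formula for $M$ in \eqref{eq:EM}) falling out for free. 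You instead integrate the ODE via the Binet substitution $u=1/r$, invoke Kepler's first law $r=M^2/(1+e\cos(\theta-\theta_0))$ together with the classical relation $e^2=1+2EM^2$, and then match coefficients between the two polar forms. Both arguments are complete: your route is the classical one, requires the explicit solution, and has the advantage of making it transparent that the orbit fills the entire attracting branch of the projected section (and your final paragraph on sign bookkeeping corresponds to the paper's Remark \ref{rmrk:ojo} on choosing the correct half-cone). The paper's route gets the conic-section property and the angular momentum without solving anything, then obtains the eccentricity from the focus--directrix definition (essentially the same computation as your polar-form identification, after rotating so that $b=0$) and the energy by evaluating \eqref{eq:polar} at the pericenter with $\dot r=0$, rather than through $e^2=1+2EM^2$.
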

 
 \begin{remark}\label{rmrk:ojo} For positive energy orbits (hyperbolas), the plane section has two components (branches), one in each of $\cC_\pm$, and one needs to pick carefully the correct branch, as stated in item (a). 
 \end{remark}

\begin{proof} 
(a)  Let $\r(t)=(x(t), y(t))$ be a solution of Equation \eqref{eq:kepler} with $M\neq 0$. Rewriting Equations \eqref{eq:kepler} and \eqref{eq:conserve} in polar coordinates, we have 
\be\label{eq:polar} \ddot r=-{1\over r^2}+{M^2\over r^3}, \quad E={\dot r^2\over  2}-{1\over r}+{M^2\over 2r^2 }.
\ee
 From the first equation  follows that the inhomogeneous linear ODE
 $$\ddot f+{f\over r(t)^3}={M^2\over r(t)^3}$$ 
 has two particular solutions: $r(t)$ and the constant solution $M^2$. Their  difference is  thus a solution of the homogeneous equation $\ddot f+f/ r(t)^3=0$. But $x(t), y(t)$ are two solutions of this equation, linearly independent for $M\neq 0$, hence  there are constants $A,B$ such that  $r(t)-M^2=Ax(t)+By(t).$ Rearranging and renaming the constants we obtain $ax+by+cr=1$, $r^2=x^2+y^2$, 
as claimed. 

The statement about the precise right half cone to  pick  is best  seen by examining   Figure \ref{fig:conics}, (i) and (ii). 
 
\mn (b)   By rotating the secting plane about the $z$ axis and possibly reflecting it about the $xy$ plane, we can assume $a\geq 0, b=0, c>0$. If $a=0$ then the secting plane is parallel to the $xy$ plane and the projected conic is a circle ($e=0$). Otherwise,  $a>0$, the  secting plane    is $ax+cz=1$,  its  intersection with the $xy$ plane is  the line $ax=1$ and the  projected conic  is $ax+cr=1$.  The   ratio of distances  of  a point $(x,y)$ on the projected section to  the origin and the intersection line is thus  $e=r/|x-1/a|=ar/|cr|=a/c$, a constant, hence the projected section is a conic, the origin is a focus and the intersection line is the corresponding  directrix.   The formula for $e$ follows from this calculation, since rotation of the plane $ax+by+cz=1$ about the $z$ axis and reflecting it about the $xy$ plane does not affect  the values of $e,|c|$ and  $a^2+b^2$.  
 
\mn (c)  The formula for $M$ follows from the  proof of  item (a). For $E$, we  again assume   $a\geq 0, b=0, c>0.$ The  orbit is then $ax+cr=1$ and at the pericenter  (the point nearest the origin)  we have $x=r=1/(a+c)$. Using this in the formula for $E$ in Equation   \eqref{eq:polar}, with $\dot r=0$, $M^2=1/c$,  we get $E=(a^2-c^2)/(2c).$ For a  general secting plane  $a^2$ is  replaced  with $a^2+b^2$ and $c$ with $|c|$. 
\end{proof}

\begin{remark}The  clever argument  in the proof of item (a) above is due to Lagrange \cite{L}. For a more geometric proof of item (c) see   \cite[\S4]{Giv}.
 \end{remark}

\begin{corollary}\label{cor:dc} The cone $\{a^2+b^2=c^2\}\subset\Rto$ parametrizes Kepler parabolas, its interior $a^2+b^2<c^2$ Kepler ellipses and exterior $a^2+b^2>c^2$ Kepler hyperbolas. See Figure \ref{fig:conics}(iii). 
\end{corollary}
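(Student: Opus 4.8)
The plan is to read the trichotomy off directly from the eccentricity formula already established in Theorem \ref{thm:EM}(b), so that the corollary becomes a mere reformulation. Recall from the definition of a conic given just before Theorem \ref{thm:EM} that the type is governed by the eccentricity $e$: the conic is a hyperbola when $e>1$, a parabola when $e=1$, a non-circular ellipse when $0<e<1$, and a circle when $e=0$. By Theorem \ref{thm:EM}(a) every Kepler orbit is the projection of a section of $\cC$ by a plane $ax+by+cz=1$ with $c\neq 0$, and by Equation \eqref{eq:eccen} this projection is a conic of eccentricity
\[
e=\frac{\sqrt{a^2+b^2}}{|c|}.
\]
Thus all that remains is to translate the inequalities on $e$ into inequalities on $(a,b,c)$.

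First I would observe that $e=1$ is equivalent to $\sqrt{a^2+b^2}=|c|$, i.e. to $a^2+b^2=c^2$, which is exactly the Minkowski cone; hence its points parametrize Kepler parabolas. Next, $0\le e<1$ is equivalent to $\sqrt{a^2+b^2}<|c|$, i.e. to $a^2+b^2<c^2$, the interior of the cone, which therefore parametrizes Kepler ellipses; here the degenerate sub-case $a=b=0$ gives $e=0$ and corresponds to circles, so the circular orbits are absorbed into the elliptic (interior) region. Finally, $e>1$ is equivalent to $\sqrt{a^2+b^2}>|c|$, i.e. to $a^2+b^2>c^2$, the exterior of the cone, parametrizing Kepler hyperbolas.

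Since each of the three conditions involves only $|c|$ and $a^2+b^2$, it is insensitive to the sign of $c$, so the classification holds throughout $\Rto$ away from the plane $c=0$ (where the secting plane becomes vertical and no focus-conic is produced). There is essentially no obstacle in this argument: the corollary is an immediate consequence of the eccentricity computation of Theorem \ref{thm:EM}, recast in terms of the geometry of the cone in $\Rto$, the only point worth flagging being that the circular case sits inside the elliptic region rather than forming a separate stratum.
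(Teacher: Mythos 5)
Your proposal is correct and matches the paper's intent: the paper states this corollary without proof, treating it as an immediate consequence of the eccentricity formula $e=\sqrt{a^2+b^2}/|c|$ from Theorem \ref{thm:EM}(b) together with the classification of conics by eccentricity, which is exactly the deduction you spell out (including the harmless absorption of circles into the elliptic region and the insensitivity to the sign of $c$).
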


\begin{corollary}\label{cor:M}
 Kepler orbits with  angular momentum $M\neq 0$  have fixed {\em latus rectum} $2M^2$ and are the projections of  sections of $\cC$ by non-vertical planes passing  through $(0,0,  M^2)$ or  $(0,0,  -M^2)$. See Figure \ref{fig:EM_pencils}(a). 
\end{corollary}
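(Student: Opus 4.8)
The plan is to read everything off Theorem~\ref{thm:EM} and its proof, so that essentially no new computation is required; the corollary is a matter of substituting the constraint $M=\text{const}$ into the formulas already established.

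First I would locate the secting plane. Fix an orbit of angular momentum of magnitude $|M|\neq 0$. By Theorem~\ref{thm:EM}(c), Equation~\eqref{eq:EM} gives $M^2=1/|c|$, hence $|c|=1/M^2$ and $c=\pm 1/M^2$; in particular $c\neq 0$, so the plane $ax+by+cz=1$ is non-vertical. This plane meets the $z$-axis where $cz=1$, i.e.\ at $(0,0,1/c)$, and $1/c=\pm M^2$. Thus the plane passes through $(0,0,M^2)$ when $c>0$ and through $(0,0,-M^2)$ when $c<0$. Conversely, any non-vertical plane through one of these two axis points has $|c|=1/M^2$ and hence, by~\eqref{eq:EM}, projects to an orbit of angular momentum magnitude $|M|$. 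The two candidate points correspond exactly to the two mirror planes $ax+by\pm cz=1$ that (as noted in Section~\ref{sec:orbitspace}) carry the same Kepler orbit; the sign of $c$ is tied to the choice of upper versus lower cone as in Remark~\ref{rmrk:ojo}.

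For the latus rectum I would use the focal polar equation. Recall from the proof of Theorem~\ref{thm:EM}(a) that the orbit itself satisfies $ax+by+cr=1$ with $r=\sqrt{x^2+y^2}$. Exactly as in the proof of part~(b), a rotation about the origin and a possible reflection about the $xy$-plane reduce this to $b=0$, $a\ge 0$, $c>0$, and in polar coordinates $x=r\cos\theta$ this becomes $r(a\cos\theta+c)=1$, i.e.
\[
r=\frac{1/c}{1+(a/c)\cos\theta}.
\]
This is the focal polar equation of a conic with semi-latus rectum $\ell=1/c$ and eccentricity $e=a/c$, consistent with~\eqref{eq:eccen}. The latus rectum, the focal chord perpendicular to the major axis, has length $2\ell=2/c=2M^2$, using $c=1/M^2$ from the first step. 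Since rotations and reflections about the origin preserve both $|c|$ and Euclidean lengths, the value $2M^2$ holds for the original orbit.

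I expect the work here to be routine substitution rather than a genuine obstacle; the only points demanding care are bookkeeping ones. One must track the sign of $c$ so that the two axis points $(0,0,\pm M^2)$ are matched to the correct cone and plane, and one must make sure the normalization of ``latus rectum'' used is the full focal chord $2\ell$ (not the semi-latus rectum $\ell$), so that the answer $2M^2$ agrees with the convention announced in the caption of Figure~\ref{fig:kepler_orbits}.
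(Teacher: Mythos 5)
Your proposal is correct and follows essentially the same route as the paper, which simply notes that the corollary is immediate from Equations~\eqref{eq:eccen} and~\eqref{eq:EM}; your write-up just spells out the substitution $|c|=1/M^2$, the intersection of the plane with the $z$-axis, and the reading of the semi-latus rectum $1/c$ from the polar form $ax+by+cr=1$.
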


This is immediate from Equations  \eqref{eq:eccen} and \eqref{eq:EM}.

\begin{figure}[!htb]
        \centering
        \includegraphics[width=.8\textwidth]{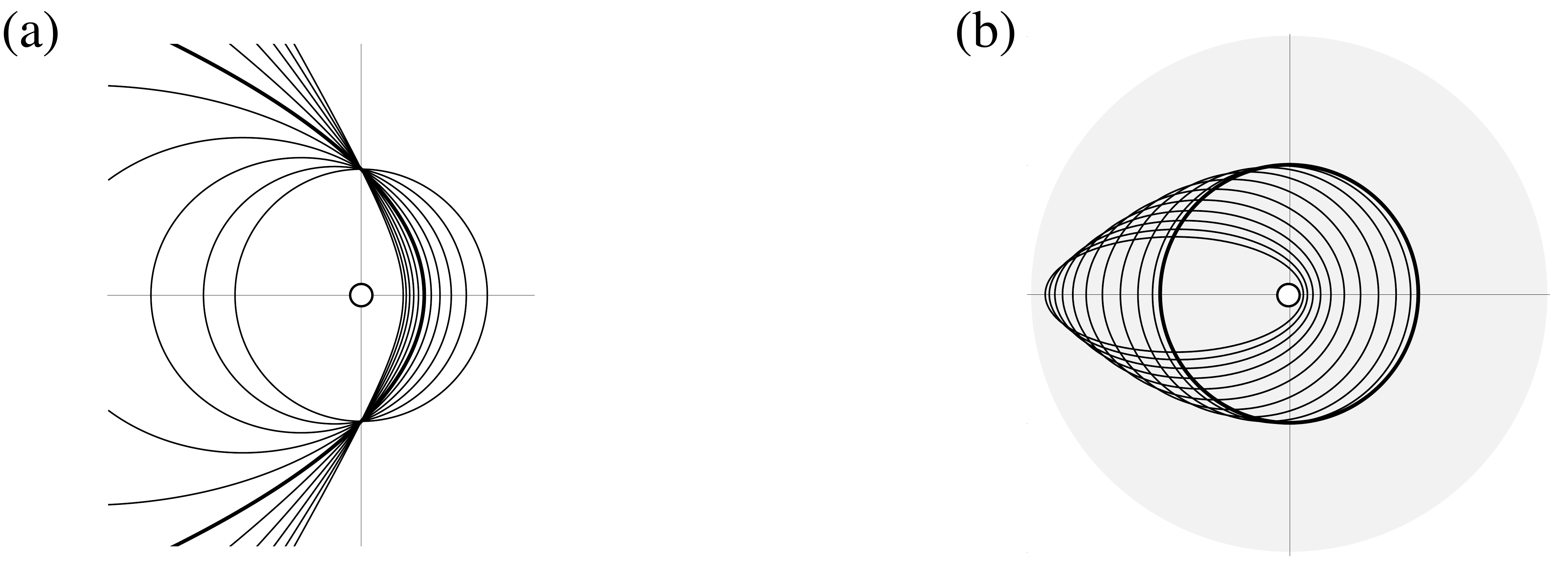} \qquad
                \caption{(a) Kepler orbits with  fixed angular momentum  (same as fixed {\em latum rectum}); the  heavy curve is a  parabola.  (b)  Kepler ellipses with fixed (negative) energy $E$ (same as fixed major axis). 
                }
        \label{fig:EM_pencils}
  \end{figure}

\begin{corollary}\label{cor:E}
Kepler orbits with   energy $E\neq 0$  are the projections  of  sections of $\cC$ by  planes tangent to the   paraboloid of revolution
$$\cP:=\left\{(x,y,z)\in \R^3\st z={|E|\over 2}\left(x^2+y^2\right)+{1\over 2|E|}\right\},$$
inscribed in  $\cC$ and tangent to it  along a horizontal circle, dividing $\cP$   into two components:  
Kepler ellipses with energy $-|E|$ are the projections of sections of $\cC_+$ by planes tangent to  the lower component $\cP_-=\cP\cap\{z<1/|E|\}$; Kepler hyperbolas with energy $|E|$ are the projections of sections of  $\cC_-$ by planes tangent to the upper component  $\cP_+=\cP\cap\{z>1/|E|\}$.  See Figure \ref{fig:parabo}. 
\end{corollary}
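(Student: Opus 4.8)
The plan is to reduce the statement to an identity between two conditions on the plane coefficients $(a,b,c)$: the condition that the section $ax+by+cz=1$ of $\cC$ carries energy $\pm|E|$, and the condition that this plane is tangent to $\cP$. First I would invoke Theorem \ref{thm:EM}(c), which gives $2|c|E=a^2+b^2-c^2$. For a Kepler ellipse we have $E=-|E|$ together with $c>0$ (orbit in $\cC_+$), and for a Kepler hyperbola we have $E=+|E|$ together with $c<0$ (orbit in $\cC_-$); in both cases the relation collapses to the single equation
\be\label{eq:Econd}
a^2+b^2=c^2-2c|E|,
\ee
with the sign of $c$ recording the orbit type. Completing the square in $c$ rewrites \eqref{eq:Econd} as $a^2+b^2-(c-|E|)^2=-E^2$, the two-sheeted hyperboloid appearing in Theorem \ref{thm:hill1}.

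Next I would compute the tangency condition directly. Substituting $z=(1-ax-by)/c$ from the plane into the defining equation of $\cP$ and clearing denominators produces, after completing the square in $x,y$, a circle
$$\left(x+\tfrac{a}{c|E|}\right)^2+\left(y+\tfrac{b}{c|E|}\right)^2=\frac{a^2+b^2-c^2+2c|E|}{c^2|E|^2}.$$
Since $\cP$ is an upward-opening paraboloid of revolution, every non-vertical plane meets it in an ellipse, in a single point, or not at all, and the three cases correspond to the right-hand side being positive, zero, or negative. Hence the plane is tangent to $\cP$ precisely when that right-hand side vanishes, which is exactly \eqref{eq:Econd}. This establishes the equivalence ``energy $\pm|E|$'' $\Leftrightarrow$ ``tangent to $\cP$.''

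It then remains to verify that $\cP$ is inscribed in $\cC$ and to locate the tangent point on the correct component. Substituting the cone relation $x^2+y^2=z^2$ into the equation of $\cP$ gives $(|E|z-1)^2=0$, a double root at $z=1/|E|$; hence $\cP$ meets $\cC$ tangentially along the horizontal circle $z=1/|E|$, which divides $\cP$ into $\cP_\pm$ as claimed (and at the vertex $z=1/(2|E|)$ the paraboloid radius is $0<1/(2|E|)$, confirming that $\cP$ lies inside $\cC$). Reading off the center of the degenerate circle above, the point of tangency is $(x_0,y_0)=-(a,b)/(c|E|)$, and using \eqref{eq:Econd} its height on $\cP$ simplifies to $z_0=\tfrac{1}{|E|}-\tfrac{1}{c}$. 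For an ellipse ($c>0$) this yields $z_0<1/|E|$, so the tangency lies on the lower component $\cP_-$; for a hyperbola ($c<0$) it yields $z_0>1/|E|$, so the tangency lies on the upper component $\cP_+$.

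The only genuine obstacle is the sign bookkeeping: one must keep straight the three linked dichotomies — the sign of the energy, the sign of $c$ (equivalently whether the orbit lives in $\cC_+$ or $\cC_-$, as fixed by Theorem \ref{thm:EM}(a)), and the component $\cP_\pm$ on which tangency occurs. The tangency computation itself is a routine completion of the square; the real content is the observation that the resulting discriminant condition coincides on the nose with the rearranged energy formula \eqref{eq:Econd}.
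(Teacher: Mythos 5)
Your proof is correct, and it hinges on exactly the same pivot identity as the paper's: tangency of the plane $ax+by+cz=1$ to $\cP$ is equivalent to $a^2+b^2-c^2+2|E|c=0$, which via Equation \eqref{eq:EM} says the projected section has energy $-|E|$ when $c>0$ and $+|E|$ when $c<0$. What differs is how you reach that identity. The paper works projectively: it homogenizes $\cP$ to the quadric $E^2(X^2+Y^2)-2|E|ZW+W^2=0$ in $\RP^3$ and obtains the tangency condition in one stroke by inverting the coefficient matrix, i.e.\ passing to the dual quadric $A^2+B^2-C^2-2|E|CD=0$; it then does the sign bookkeeping in the direction opposite to yours, writing the tangent plane at a point $\p_0=(x_0,y_0,z_0)\in\cP$ as $(a,b,c)=(|E|x_0,|E|y_0,-1)/(|E|z_0-1)$ and reading the sign of $c$ from $z_0\lessgtr 1/|E|$, whereas you recover the tangency point $(x_0,y_0)=-(a,b)/(c|E|)$, $z_0=1/|E|-1/c$ from the plane. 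Your substitution-and-discriminant computation buys self-containedness --- no projective duality machinery --- and has the bonus of actually verifying the ``inscribed in $\cC$ and tangent along $z=1/|E|$'' claim, which the paper asserts without proof. What the paper's route buys is reuse: completing the square in its dual equation yields $a^2+b^2-(c-|E|)^2=-E^2$, precisely the dual surface $\cP^*\subset\Rto$ (hyperboloid of two sheets) on which the proof of Theorem \ref{thm:hill1} and items \ref{ti:ee}--\ref{ti:eh} of Table \ref{tab:pd} are built, so for the paper the duality computation is not overhead but the point of the exercise. One detail worth making explicit in your write-up: every plane tangent to $\cP$ is automatically non-vertical (a vertical plane cuts a paraboloid of revolution in a parabola, never tangentially), so restricting to $c\neq 0$ before substituting $z=(1-ax-by)/c$ loses no tangent planes, and your equivalence is genuinely two-sided.
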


\begin{proof}$\cP$ 
is given in homogeneous coordinates $(X:Y:Z:W)$ on $\RP^3$  by $E^2(X^2+Y^2)-2|E|ZW+W^2=0.$ The dual equation, parametrizing the  planes $AX+BY+CZ+DW=0$ tangent to $\cP$,   is given by inverting the coefficient matrix of the quadratic equation defining $\cP$, and is $A^2+B^2-C^2-2|E|CD=0,$ or  in affine coordinates, $a^2+b^2-c^2+2|E|c=0$. At a point $\p_0=(x_0,y_0,z_0)\in\cP$ the tangent plane is $ax+by+cz=1$ where $(a,b,c)=(|E|x_0, |E|y_0, -1)/(|E|z_0-1)$. If $\p_0\in \cP_-$ then  $z_0<1/|E|$ hence  $c>0$, so  by Equation \eqref{eq:EM} the energy of the corresponding orbit is $(a^2+b^2-c^2)/(2c)=-|E|,$ as needed. A similar calculation  for 
the case $\p_0\in \cP_+$ completes the proof. \end{proof}

\begin{remark}The last corollary we learned  from \cite[page 145]{Giv}, although our proof is quite different. 
\end{remark}

\section{The geometry of the space of Kepler orbits}
\label{sec:dual}

Recall that   $\Rto$ is the 3-dimensional space with coordinates $a,b,c$, equipped with the indefinite quadratic form $\|(a,b,c)\|^2:=a^2+b^2-c^2$ and associated  flat Lorentzian metric $
ds^2=da^2+db^2-dc^2$.  A line in $\Rto$ is {\em spacelike}, {\em null} or {\em timelike} if $ds^2$ restricts on it to a positive, null or negative metric, respectively. A plane in $\Rto$ is {\em elliptic}, {\em parabolic} \footnote{Some authors use the term `isotropic' instead of `parabolic'. For example,  \'E. Cartan \cite{Cart}. Elliptic planes are called also `spacelike.'} or  {\em hyperbolic} if $ds^2$ restricted to it is of  signature $(2,0),(1,0),$ or $(1,1)$, respectively. The {\em null cone} with vertex $\v\in\Rto$ is the set of points $\v'\in\Rto$ such that $\|\v-\v'\|^2=0$; equivalently, the  union of null lines through $\v$.

\subsection{Duality}

The equations $ax+by+cz=1, x^2+y^2=z^2$ define a duality between Kepler's $xy$ plane and  Minkowski's space $\Rto$: to each point $(a,b,c)\in\Rto\setminus 0 $ corresponds a curve in the $xy$ plane, a Kepler orbit if $c\neq 0$ or a straight line if $c=0$, the projection of the intersection of the  plane  $ax+by+cz=1$ with one of the components of  $\cC=\{x^2+y^2=z^2\}$  (see Theorem \ref{thm:EM}(a)): if $c>0$ then one projects the intersection with $\cC_+=\cC\cap \{z>0\}$,  if $c< 0$ the intersection with $\cC_-=\cC\cap \{z<0\}$ and  if $c=0$ the intersection with either component. Conversely, to each point $(x,y)\in \R^2\setminus 0$ corresponds the  plane  $ax+by+ cr=1$  in $\Rto$,  where $r=\sqrt{x^2+y^2}. $  Table \ref{tab:pd} summarizes some instances of this duality. 




\begin{table}[!htbp]
\renewcommand{\arraystretch}{2.2}

\captionsetup{ font=normalsize}
\caption{Kepler-Minkowski   duality}
{
\centering
\hspace{-.5cm}
 \begin{tabular}[t]{
c
 p{.48\textwidth}|p{.48\textwidth} } 
& Kepler $xy$-plane& Minkowski space   $\Rto$\\ 
\cmidrule(lr){2-3}
\ti&A Kepler orbit (or a line)   & A point  \\
\ti&A Kepler ellipse/parabola/hyperbola & A point inside/on/outside  $a^2+b^2=c^2$\\

\ti&A line  & A point in the $ab$ plane \\
\ti{\label{it:para}}& A point & A parabolic  plane\\
\ti{\label{it:nullcone}}&Kepler orbits tangent to a given  Kepler orbit & The  null cone with a given vertex \\

\ti{\label{it:nullline}}&Kepler orbits tangent at a point& A null line \\

\ti{\label{it:spaceline}}&Kepler orbits passing through 2 given points& A spacelike line \\

\ti{\label{it:timeline}}&Nested Kepler orbits with concurrent directrices& A timelike line \\

\ti{\label{it:fam}}&Kepler orbits of fixed angular momentum $\pm M\neq 0$ & A horizontal plane $c\neq 0$
\\

\ti{\label{ti:ee}}&Kepler ellipses with energy $E<0$ (projected  sections of $\cC$ by planes tangent to $2|E|z=E^2\left(x^2+y^2\right)+1$, $|E|z<1)$ 
& The upper  sheet of  the  hyperboloid of 2 sheets $a^2+b^2-(c-|E|)^2=-E^2$\\
\ti{\label{ti:eh}}&Kepler hyperbolas of  energy $E> 0$ (projected of sections of $\cC$ by planes tangent to $2|E|z=E^2\left(x^2+y^2\right)+1$, $|E|z>1)$  
& The lower  sheet of  the  hyperboloid of 2 sheets $a^2+b^2-(c-|E|)^2=-E^2$\\

\ti{\label{ti:me}}&Kepler ellipses with minor axis $B$ (projected   sections of $\cC$ by planes tangent to 
$x^2 + y^2  - z^2= -B^2/4$) & The hyperboloid of 2 sheets $a^2+b^2-c^2=-4/B^2$\\

\ti{\label{ti:mh}}&Kepler hyperbolas with  minor axis $B$: projected  sections of $\cC$ by planes tangent to $x^2 + y^2   -z^2=B^2/4$ & The hyperboloid of 1 sheet $a^2+b^2-c^2=4/B^2$\\

\ti{\label{ti:cu}}&Central projections of  Kepler orbits with energy $\pm E_k$ on a surface of constant curvature $k$ &  
The hyperboloid (of 1 or 2 sheets, depending on $k$) $a^2 + b^2 - (c - |E_k|)^2=- E_k^2 - k $  \\

 \end{tabular}
}
\label{tab:pd}
\end{table}

We shall not dwell on all items of this table, as most  reflect  statements proven elsewhere in this article or are  simple to verify. We  sketch here proofs of a few  items and leave the rest  for the reader to explore.

\begin{proposition}[Item \ref{it:para} of Table \ref{tab:pd}]
The set of Kepler orbits sharing a point corresponds to a  parabolic plane in $\Rto$. Every parabolic  plane in $\Rto$ arises in this way. 
\end{proposition}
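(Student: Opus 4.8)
The plan is to exhibit the family of Kepler orbits through a fixed point as a single affine plane in $\Rto$ and then recognize the parabolic condition as the vanishing of the Minkowski norm of its normal. Fix $(x_0,y_0)\in\R^2\setminus 0$ and put $r_0=\sqrt{x_0^2+y_0^2}$, so that $(x_0,y_0,r_0)$ is the lift of $(x_0,y_0)$ to the upper cone $\cC_+$. By Theorem \ref{thm:EM}(a) the Kepler orbit dual to $(a,b,c)$, $c>0$, is the projection of the section of $\cC_+$ by the plane $ax+by+cz=1$; hence $(x_0,y_0)$ lies on this orbit exactly when its lift lies on the secting plane, i.e.\ when $ax_0+by_0+cr_0=1$. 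Thus the orbits through $(x_0,y_0)$ are the points of
\be\label{eq:parabplane}
P_{(x_0,y_0)}=\{(a,b,c)\in\Rto\st ax_0+by_0+cr_0=1\}.
\ee

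First I would check that $P_{(x_0,y_0)}$ is parabolic. Its normal $(x_0,y_0,r_0)$ satisfies $\|(x_0,y_0,r_0)\|^2=x_0^2+y_0^2-r_0^2=0$ by the very definition of $r_0$. Since restricting $ds^2=da^2+db^2-dc^2$ to a plane with normal $(\alpha,\beta,\gamma)$ yields a form whose determinant is a positive multiple of $\gamma^2-\alpha^2-\beta^2$, the induced metric is degenerate of signature $(1,0)$ precisely when the normal is null; equivalently, a plane is parabolic iff its normal is Minkowski-null. This gives the forward direction at once.

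For the converse I would invert this construction. Writing a parabolic plane, after scaling its equation, as $\{\alpha a+\beta b+\gamma c=\delta\}$ with $\alpha^2+\beta^2=\gamma^2$ and $(\alpha,\beta,\gamma)\neq 0$ (so $\gamma\neq 0$), comparison with \eqref{eq:parabplane} asks for $(x_0,y_0,r_0,1)$ proportional to $(\alpha,\beta,\gamma,\delta)$, forcing $(x_0,y_0)=(\alpha/\delta,\beta/\delta)$ and $r_0=\gamma/\delta$. The constraint $x_0^2+y_0^2=r_0^2$ then follows automatically from $\alpha^2+\beta^2=\gamma^2$, and $(x_0,y_0)\neq 0$ because $\gamma\neq 0$; so the plane does come from the point $(x_0,y_0)$.

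The one place demanding care --- the main obstacle --- is the sign and degeneracy bookkeeping in this last step. The equation $r_0=\gamma/\delta$ requires $r_0\ge 0$, i.e.\ $\gamma\delta>0$, which is exactly the condition that the cone-point lift land on $\cC_+$ rather than on $\cC_-$. I would dispose of the opposite orientation using the identification of $(a,b,c)$ with $(a,b,-c)$ (these name the same Kepler orbit): applying $c\mapsto -c$ turns a plane with $\gamma\delta<0$ into one with $\gamma\delta>0$, and a direct check shows its $\{c<0\}$ part describes, via the lower cone $\cC_-$, the same orbits through $(x_0,y_0)=(\alpha/\delta,\beta/\delta)$ with $r_0=-\gamma/\delta>0$. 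The remaining case $\delta=0$, a parabolic plane through the origin of $\Rto$, is not of the form \eqref{eq:parabplane} for any finite point; I would account for it as the limiting position in which the shared point recedes to infinity, matching the interpretation of the origin of $\Rto$ as the line at infinity of the Kepler plane.
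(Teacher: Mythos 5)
Your proof is correct and takes essentially the same route as the paper's: the paper characterizes parabolic planes as those making an angle of $45^\circ$ with a horizontal plane, which is exactly your criterion that the Euclidean normal be Minkowski-null, and both arguments then observe that the plane $\{ax_0+by_0+cr_0=1\}$ dual to a point has the null normal $(x_0,y_0,r_0)$. Your write-up is in fact more careful than the paper's two-line proof: the Gram-determinant justification of the parabolicity criterion, the sign bookkeeping via the reflection $(a,b,c)\mapsto(a,b,-c)$ (choosing the correct half-cone $\cC_\pm$), and especially the exceptional case of parabolic planes through the origin of $\Rto$ --- which arise from no finite point and must be attributed to a point at infinity of the Kepler plane --- are all left implicit in the paper, even though the last point is needed for the literal converse ``every parabolic plane arises in this way.''
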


\begin{proof} A plane $ax+by+cz=1$ in  $\Rto$ is parabolic  if and only if  it forms an angle    of 45 degrees with a horizontal  plane.  This angle satisfies  $\tan\alpha=\sqrt{x^2+y^2}/|z|$ and the result follows. 
\end{proof}

 \begin{remark} This last proposition is equivalent to Corollary \ref{cor:dc} above by projective duality.
 \end{remark}
 
\begin{proposition}[Item \ref{it:nullline} of Table \ref{tab:pd}] \label{prop:null}
The set of Kepler orbits tangent to a  given Kepler orbit  at one of its points  corresponds to a null line in $\Rto$. Every null line is obtained in this way. See Figure \ref{fig:null_pencil}. 
\end{proposition}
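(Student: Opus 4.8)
The plan is to work entirely inside the conic-section model of Theorem~\ref{thm:EM}, lifting the tangency condition from the Kepler plane up to the cone $\cC$ and then dualizing the resulting pencil of planes. First I would lift the base point: a point $(x_0,y_0)\in\R^2\setminus 0$ corresponds to the point $\p_0=(x_0,y_0,r_0)\in\cC_+$ with $r_0=\sqrt{x_0^2+y_0^2}$, which is a null vector for the Minkowski form since $x_0^2+y_0^2-r_0^2=0$. By Theorem~\ref{thm:EM}(a) the Kepler orbits through $(x_0,y_0)$ are exactly the projections of the sections of $\cC$ by planes $ax+by+cz=1$ that pass through $\p_0$, i.e.\ those $(a,b,c)$ with $ax_0+by_0+cr_0=1$; this is the parabolic plane $\Pi\subset\Rto$ of Item~\ref{it:para}, whose unique degenerate (null) direction is $\diag(1,1,-1)\,\p_0=(x_0,y_0,-r_0)$. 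Since tangent orbits in particular pass through $(x_0,y_0)$, the line we seek must lie in $\Pi$, so it suffices to identify which direction of $\Pi$ the tangency condition selects.

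The key step is to convert tangency of the projected planar orbits into a statement about the secting planes in $\R^3$. Let $T=\{x_0x+y_0y-r_0z=0\}$ be the tangent plane to $\cC$ at $\p_0$, i.e.\ the Euclidean orthogonal complement of $(x_0,y_0,-r_0)$. For a plane $\pi$ (not through the origin, hence $\pi\neq T$) giving a smooth conic through $\p_0$, the tangent line to the section $\pi\cap\cC$ at $\p_0$ is $\pi\cap T$. Because $(0,0,1)\notin T$, the orthogonal projection $\R^3\to\R^2$ restricts to a linear isomorphism on $T$; hence two sections have the same projected tangent direction at $(x_0,y_0)$ precisely when their planes meet $T$ in the same line through $\p_0$. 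Therefore the orbits tangent to a fixed orbit $\v_0=(a_0,b_0,c_0)$ at $(x_0,y_0)$ correspond exactly to the planes containing the fixed tangent line $\ell:=\pi_0\cap T$, where $\pi_0=\{a_0x+b_0y+c_0z=1\}$. This translation is the main obstacle, as it requires both the bijectivity of the projection on $T$ and the observation that $\ell$ is a genuine tangent line rather than a ruling.

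Next I would dualize the pencil. Writing $\w$ for a direction vector of $\ell$, the planes $ax+by+cz=1$ containing $\ell$ are cut out by $ax_0+by_0+cr_0=1$ and $aw_1+bw_2+cw_3=0$, so they form a line $L\subset\Rto$ whose Euclidean direction is $d=\p_0\times\w$ (Euclidean cross product). Both $\p_0$ and $\w$ lie in $T$ (the tangent plane contains the ruling $\R\p_0$, and $\w$ is a direction of $\ell\subset T$), so both are Euclidean-orthogonal to $(x_0,y_0,-r_0)$; moreover $\ell$ does not pass through the origin because $\pi_0$ does not, which forces $\w\not\parallel\p_0$. Hence $\p_0,\w$ span the plane $T$ and their cross product $d=\p_0\times\w$ is parallel to $(x_0,y_0,-r_0)$, which is exactly the Minkowski-null degenerate direction of $\Pi$. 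Thus $L$ is a null line. (Equivalently, since $\p_0$ is Minkowski-null and $x_0w_1+y_0w_2-r_0w_3=0$ says $\langle\p_0,\w\rangle=0$ in the Minkowski form, the Lagrange identity for the Minkowski cross product yields $\|d\|^2=0$ directly, both terms vanishing.)

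Finally, for the converse I would reverse the construction. A null line $L$ cannot lie in the plane $\{c=0\}$, on which the form $a^2+b^2$ is positive definite, so $L$ contains points with $c\neq 0$, i.e.\ genuine orbits; its null direction $d$ (taken with $d_3<0$) determines $\p_0$ as the upper null ray $\R_{>0}(d_1,d_2,-d_3)\subset\cC_+$, and the tangency point $(x_0,y_0)$ is then pinned down by scaling so that a chosen $\v_0\in L$ satisfies $a_0x_0+b_0y_0+c_0r_0=1$. Running the two middle paragraphs backwards identifies $L$ as the line of orbits tangent to the orbit of $\v_0$ at $(x_0,y_0)$, and a dimension count (tangent lines to $\cC_+$ and null lines in $\Rto$ both form $3$-parameter families) confirms that every null line arises in this way, as in Figure~\ref{fig:null_pencil}. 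The only delicate points are the routine normalization checks in this last paragraph, which I would relegate to a direct verification.
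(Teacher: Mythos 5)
Your forward direction is correct (at the same level of rigor as the paper, i.e.\ modulo the hyperbola-branch caveats that the paper itself flags in Remark \ref{rmrk:alert}) and it takes a genuinely different route. The paper's proof is symmetry-based: it invokes Theorems \ref{thm:main1} and \ref{thm:main2} to normalize the given orbit to the unit circle and the point to a standard point, checks by a one-line computation that the tangency conditions there cut out a null line, and then disposes of the converse in one sentence, by transitivity of the Minkowski similarity group on null lines. You instead argue invariantly in the cone model: the tangent line of a section at $\p_0$ is $\pi\cap T$, the vertical projection is injective on $T$, so the tangency pencil is exactly the pencil of planes through $\ell=\pi_0\cap T$, whose dual line has direction $\p_0\times\w$, parallel to $(x_0,y_0,-r_0)$, a null vector. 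This is self-contained linear algebra, independent of the 7-dimensional symmetry theorems, and it identifies the null direction concretely as the degenerate direction of the parabolic plane $\Pi$ --- a real gain in transparency over the paper's normalization argument.

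The converse, however, has a genuine gap. A dimension count between two 3-parameter families cannot establish surjectivity, and the ``routine normalization'' you defer is not routine: it fails outright for certain null lines. Writing $\langle\v,d\rangle:=v_1d_1+v_2d_2-v_3d_3$, your construction needs a $t>0$ with $t\,\langle\v_0,d\rangle=1$, where $d$ is the null direction of $L$; but there are null lines along which $\langle\v,d\rangle$ vanishes identically. For example, $L=\{a=c,\ b=0\}$ --- the very line discussed in Remark \ref{rmrk:alert} --- or $L=\{(1,s,s)\st s\in\R\}$ with $d=(0,1,1)$: in both cases $\langle\v,d\rangle=0$ for every $\v\in L$, the dual line $\ell$ meets the projective cone $\ccC$ only at a point of the circle at infinity $S^1_\infty$, and the orbits of the pencil are pairwise disjoint (e.g.\ the confocal parabolas $t(x+r)=1$), so there is no tangency point in the Kepler plane to recover, and no reverse construction can produce one. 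These are precisely the exceptional lines for which the literal statement of the proposition fails, as the paper concedes; the paper's transitivity argument sweeps them into that acknowledged caveat, whereas your proposal asserts a complete proof and hides the failure inside a step declared trivial. (There is also a sign issue: with your normalization $d_3<0$, a line with $\langle\v_0,d\rangle<0$ forces a lift to $\cC_-$, i.e.\ tangency on repelling branches.) To repair the converse you should either restrict to null lines with $\langle\v_0,d\rangle>0$ and prove surjectivity onto that class by running your explicit construction (not by counting dimensions), or follow the paper in treating the degenerate lines as explicit exceptions.
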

\begin{proof} Let $C$ be the given Kepler orbit and $P\in C$. Using Kepler's orbital symmetries (Theorems \ref{thm:main1} and \ref{thm:main2}) we can assume, without loss of generality, that $C$  is the unit circle and $P=(0,1)$ (see  Remark \ref{rmrk:alert} below, though). A Kepler orbit  $ax+by+cr=1$ is tangent to $C$ at $P$   if and only if $a=1, b+c=0$, which is a null line in $\Rto$. Every null line is congruent to this line by an orbital symmetry.
\end{proof}

\begin{figure}[!htb]    
\centering\includegraphics[width=\textwidth]{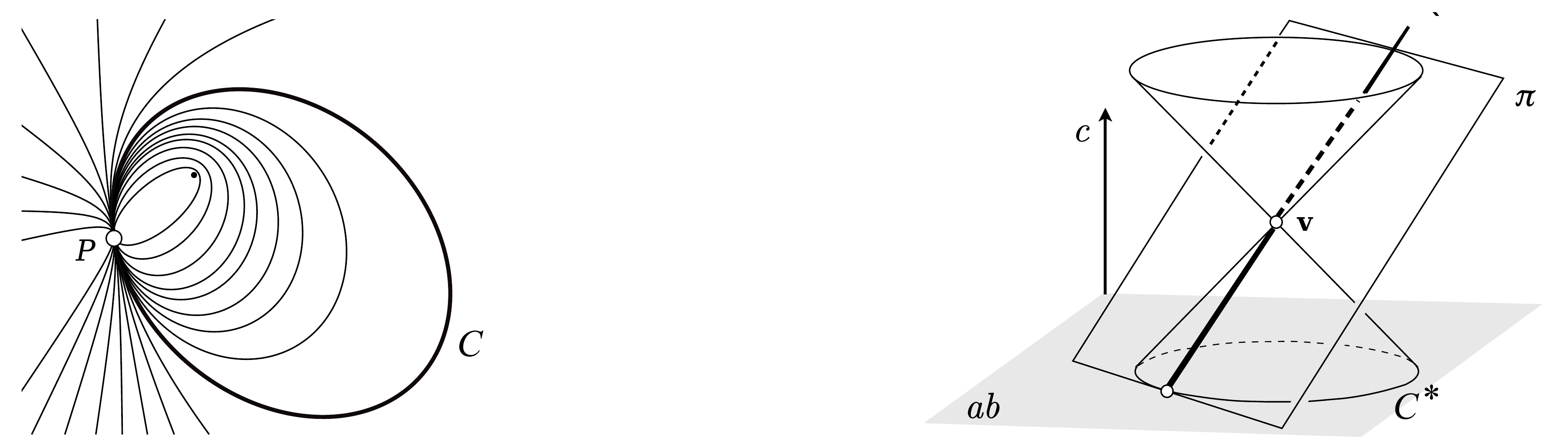}
 \caption{\small Proposition \ref{prop:null}. 
 Left: the set of Kepler orbits tangent  to a fixed Kepler orbit $C$ (the dark curve) at a fixed point $P\in C$. 
Right: the point $\v \in \Rot_+$ corresponds to $C$, the null cone with vertex $\v$ corresponds to  all Kepler orbits tangent $C$, the parabolic plane $\pi$ corresponds to all Kepler orbits passing through the  point  $P\in C$. The intersection of $\pi$ with the cone is one of its generators, a null line, corresponding to the Kepler orbits tangent to $C$ at $P$.   The intersection  of the cone with the $c=0$ plane is a circle $C^*$, corresponding to all lines tangent to $C$ (see Proposition \ref{prop:dualC}).  }
 \label{fig:null_pencil}
\end{figure}

\begin{proposition}[Item \ref{it:timeline} of Table \ref{tab:pd}] \label{prop:time}
The Kepler orbits corresponding  to a line in $\Rto$ (a `pencil' of Kepler orbits) have concurrent directrices (they all pass through a single point). The orbits of a timelike pencil are nested (same as disjoint). \end{proposition}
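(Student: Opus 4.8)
The plan is to prove the two assertions separately, leaning on the duality of Section \ref{sec:dual}. For the concurrency of directrices I would first record, from the proof of Theorem \ref{thm:EM}(b), that the directrix of the orbit attached to $(a,b,c)$ is the trace of the secting plane $ax+by+cz=1$ on $\{z=0\}$, namely the line $\{ax+by=1\}$ of the Kepler plane; in particular it depends only on $(a,b)$. Writing the pencil as $\{\v_0+t\,\mathbf{d}\}$ with $\v_0=(a_0,b_0,c_0)$ and $\mathbf{d}=(d_1,d_2,d_3)$, the associated directrices are
\[
(a_0x+b_0y-1)+t\,(d_1x+d_2y)=0 ,
\]
a one-parameter linear family of lines, hence a pencil passing through the common solution of $a_0x+b_0y=1$ and $d_1x+d_2y=0$ (all directrices coincide if $(d_1,d_2)=0$, and are parallel, i.e. concurrent at infinity, if $(d_1,d_2)$ is proportional to $(a_0,b_0)$). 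This already yields the first statement for an arbitrary pencil.

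For the second statement the guiding picture is that two orbits of a pencil can cross only if the whole pencil is forced through a common point. By Item \ref{it:para}, the orbits through a point $p=(x_0,y_0)$ form the parabolic plane $\Pi_p=\{ax_0+by_0+cr_0=1\}\subset\Rto$, where $r_0=\sqrt{x_0^2+y_0^2}$, and an orbit $\v\in\Rto_+$ passes through $p$ iff $\v\in\Pi_p$. The direction space of a parabolic plane carries a degenerate positive-semidefinite form, so it contains no timelike vector; since a timelike line has timelike direction $\mathbf{d}$, no parabolic plane can contain such a line. Consequently, if two distinct points $\v,\v'$ of $\ell$ lying on the same side of $\{c=0\}$ had orbits meeting at $p$, the segment $\overline{\v\v'}\subset\Pi_p$ would put $\mathbf{d}$ into the direction space of $\Pi_p$, a contradiction; so same-side pairs never meet. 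The one thing this does not settle directly is that a timelike line necessarily crosses $\{c=0\}$, after which orbit space identifies $(a,b,c)$ with $(a,b,-c)$, and I would still need to exclude crossings between orbits on opposite sides of this fold.

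To finish cleanly I would remove the fold by a symmetry. The connected Lorentz group acts transitively on timelike directions, so a Minkowski rotation—which by Theorem \ref{thm:main1} is realized by the honest connected $\COto$-action on $\R^2\setminus 0$, a genuine diffeomorphism preserving disjointness of orbits—carries $\ell$ to a vertical line $\{(a_0,b_0,t)\}$. For a vertical line the fold becomes the harmless identification $t\leftrightarrow -t$, which returns the same orbit, and every orbit of the pencil is a level set $\{(1-a_0x-b_0y)/r=s\}$, $s>0$, of the single function $\psi=(1-a_0x-b_0y)/r$ on $\R^2\setminus 0$. Distinct orbits correspond to distinct positive values of $s$ and are therefore pairwise disjoint, i.e. nested; transporting back by the symmetry gives the assertion for $\ell$. (The parabolic planes through $\ell$, equivalently the null Euclidean normals orthogonal to $\mathbf{d}$, number $0$, $1$ or $2$ as $\mathbf{d}$ is timelike, null or spacelike, the null conic $\{a^2+b^2=c^2\}$ being self-dual under the Minkowski form via the pole-polar relation; this trichotomy is what underlies Items \ref{it:nullline} and \ref{it:spaceline} as well.)

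I expect the main obstacle to be exactly the bookkeeping around the fold $(a,b,c)\sim(a,b,-c)$. A timelike line always crosses $\{c=0\}$, so the pencil is not represented by a single line in orbit space $\Rto_+$, and the clean ``a line cannot lie in a parabolic plane'' argument disposes only of same-side intersections. Reducing to a vertical line is what makes the fold transparent; the remaining delicate check is that the Minkowski rotation taking $\ell$ to vertical really is a bona fide diffeomorphism of the punctured plane, so that global disjointness transfers. This is where I would be most careful, using that these particular symmetries come from the complete $\COto$-action of Theorem \ref{thm:main1} and not from the incomplete vector fields.
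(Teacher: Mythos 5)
Your first two steps are sound. The directrix computation is the coordinate form of the paper's argument (the paper phrases it via the dual line in $\R^3$: all secting planes contain that line, so all directrices pass through its intersection with $\{z=0\}$), and your same-side argument --- a timelike line cannot lie in a parabolic plane, while the dual points of two orbits through a common point $p$ lie in a common parabolic plane $\Pi_p$ --- is a correct dual substitute for the paper's observation that a timelike pencil corresponds to a line in $\R^3$ missing $\cC$. You were also right to isolate the fold $(a,b,c)\sim(a,b,-c)$ as the crux; the paper's proof silently ignores it (it tacitly assumes that an intersection point of two orbits lifts to the \emph{same} half-cone for both, so that it comes from a point of $\ell\cap\cC$).

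The transport step, however, fails, and it cannot be repaired. The global $\COto$-action of Theorem \ref{thm:main1} is defined by lifting to $\cC_+$, applying the linear map, and projecting back; hence it sends the Kepler orbit dual to $\v\in\Rto_+$ to the Kepler orbit dual to $g\v$ (where $g$ is the induced linear action on $\Rto$) \emph{only while $g\v$ stays in} $\Rto_+$. If $g\v$ crosses into $\{c<0\}$, the diffeomorphism carries the orbit of $\v$ onto the projection of the plane of $g\v$ intersected with $\cC_+$ --- a repelling hyperbola branch (possibly empty), \emph{not} the Kepler orbit of the folded point $(a,b,-c)$. So the induced map on the folded orbit space is not the linear Lorentz map, and disjointness of the vertical pencil does not pull back to the folded pencil. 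Indeed, the cross-side claim you still needed is false: take $\v=(0,0,1)$, whose orbit is the unit circle $r=1$, and $\v'=(-1,0,-1)$, whose orbit (via $\cC_-$, per the paper's duality convention) is the parabola $r=1+x$; then $\v-\v'=(1,0,2)$ has $\|(1,0,2)\|^2=1-4<0$, so the line through $\v,\v'$ is timelike, yet the two orbits meet at $(0,\pm1)$. This is precisely the defect the paper concedes in Remark \ref{rmrk:alert}, which explicitly declares Proposition \ref{prop:time} and its proof ``strictly speaking incorrect.'' The honest statement is the one your parabolic-plane argument already proves: the orbits attached to the points of a timelike line lying in $\Rto_+$ (equivalently, the $\cC_+$-sections of all planes through the dual line, allowing repelling branches) are pairwise disjoint --- no transport needed, and none possible.
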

\begin{proof} The orbits of a Kepler pencil corresponding to a line $\ell^*\subset \Rto$ are obtained by projecting sections of $\cC$ by planes passing through a fixed line  $\ell\subset \R^3$ (the line dual to $\ell^*$). The directrix of a Kepler orbit is the intersection of the secting plane with the $xy$ plane. Thus all directrices of Kepler orbits in a pencil pass through a fixed point, the intersection of $\ell$ with the $xy$ plane. The line $\ell^*$ is spacelike, null or timelike if and only if $\ell$ intersects $\cC$ at $2,1$ or $0$ points, respectively. These intersections points project to the intersection points of the orbits of the pencil.  Thus the  orbits of  a timelike pencil are disjoint. See Figure \ref{fig:pencils}. 
\end{proof}

\begin{figure}[!htb]    
\centering\includegraphics[width=\textwidth]{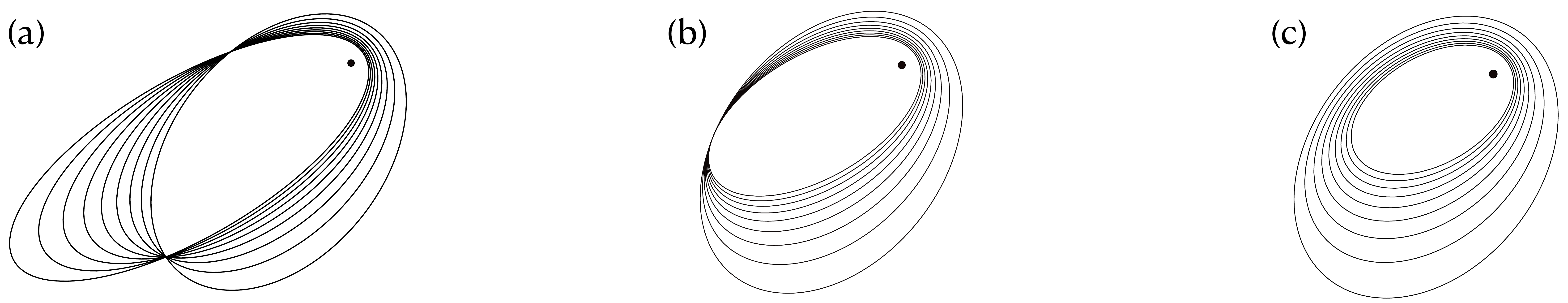}
 \caption{Kepler pencils: (a) spacelike, (b) null,  (c) timelike.}\label{fig:pencils}
 \end{figure}

\begin{remark}[Error alert]\label{rmrk:alert}  Strictly speaking,   items \ref{it:nullcone}-\ref{it:spaceline} of Table \ref{tab:pd}, and the  last two propositions with  their  proof,  are incorrect. Can you see why before continuing reading?

The exceptions arise with the hyperbolic orbits. By  our definition, they   only include one branch (the `attractive branch', see Figure \ref{fig:kepler_orbits}). For example,  there are spacelike pencils of Kepler hyperbolas which only intersect at one point (the 2nd point of intersection is on the `repelling  branch') 
or even spacelike pencils of disjoint Kepler hyperbolas (the 2 intersection points are on the repelling branch). The same problem occurs  with null lines: there are null pencils  of disjoint Kepler hyperbolas (the tangency point is again on the repelling  branch). The proof of Proposition \ref{prop:null} is not correct because applying an orbital symmetry to the circular case may move the tangency  point to a repelling branch. 

Another problem is that  some of the statements are true only when considered in the projective plane. For example, the null line $a=c, b=0$ corresponds to all Kepler parabolas symmetric about the $x$-axis. Their common tangency point lies on  the line at infinity. 

To fix these problems  one needs to separate  statements and proofs of   some items    of Table \ref{tab:pd} into cases.  It is not difficult, and can be even quite entertaining, but we shall not elaborate further on this issue, trusting the reader to make adjustments of the relevant items in the table accordingly. 
\end{remark}

\begin{corollary}\label{cor:fixedMin}
Each   family of Kepler orbits   of fixed minor axis, ellipses or hyperbolas, is a non-flat 2-parameter family admitting a 3-dimensional group of symmetries. The elliptic and hyperbolic cases are not orbitally  equivalent, although in both cases the orbital symmetry group is isomorphic to $\PSLt$. 
\end{corollary}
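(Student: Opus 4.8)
The plan is to run the same program as for the fixed-energy families in Theorem \ref{thm:hill1}, reading everything off from the geometry of the orbit space $\Rto$. By items \ref{ti:me} and \ref{ti:mh} of Table \ref{tab:pd}, the family of Kepler ellipses (resp. hyperbolas) of fixed minor axis $B$ corresponds to the hyperboloid
\[
\Sigma_k=\{(a,b,c)\in\Rto\st a^2+b^2-c^2=k\},
\]
with $k=-4/B^2<0$ (two sheets) for ellipses and $k=4/B^2>0$ (one sheet) for hyperbolas, both centered at the origin. First I would determine the orbital symmetries of the subfamily that come from the ambient Kepler symmetries, i.e.\ the stabilizer of $\Sigma_k$ inside the identity component of $\COto\ltimes\Rto$ (Theorem \ref{thm:main2}). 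Writing such a symmetry as $w\mapsto Aw+v$ with $A^\top QA=\lambda Q$, $Q=\diag(1,1,-1)$, the condition $(Aw+v)^\top Q(Aw+v)=k$ on $\Sigma_k$ reduces, after expanding, to requiring the linear functional $w\mapsto v^\top QAw$ to be constant on $\Sigma_k$; since $\Sigma_k$ is a nondegenerate quadric spanning $\Rto$ affinely, this forces $v=0$, and then $\lambda=1$. Hence the stabilizer is exactly $\SO^+_{2,1}\cong\PSLt$, a $3$-dimensional group acting as the full orientation-preserving isometry group of the metric induced by $ds^2$ on $\Sigma_k$ (hyperbolic on each sheet of the two-sheeted $\Sigma_k$, de Sitter on the one-sheeted $\Sigma_k$). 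This computation is uniform in the sign of $k$ and supplies the $3$-dimensional symmetry group in both cases.

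Next I would upgrade ``contains a $3$-dimensional symmetry group'' to ``has symmetry group of dimension exactly $3$,'' which is what non-flatness amounts to. By the Lie--Tresse gap theorem (Appendix), the point-symmetry algebra of a $2$-parameter family has dimension in $\{0,1,2,3,8\}$, so here it is either $3$ or $8$, the latter occurring precisely in the flat case. For the elliptic family I would rule out $8$ by the cleanest route: a vertical translation $c\mapsto c+2/B$ is an orbital symmetry (Theorem \ref{thm:main2}) carrying the centered two-sheeted hyperboloid $\Sigma_{-4/B^2}$ onto the tangent-to-$\{c=0\}$ hyperboloid $a^2+b^2-(c-|E|)^2=-E^2$ with $|E|=2/B$, i.e.\ onto the fixed-energy ellipse surface of item \ref{ti:ee}; it therefore restricts to an orbital equivalence of the two plane families, and non-flatness follows from Theorem \ref{thm:hill1}. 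The one-sheeted (hyperbolic) case cannot be reduced this way, since Minkowski similarities preserve the number of sheets and no fixed-energy surface is a one-sheeted hyperboloid; here I would instead certify non-flatness directly, either by exhibiting a nonvanishing relative differential invariant of the associated second-order ODE, or by identifying the family with the corresponding $3$-dimensional-symmetry item in Tresse's classification, as in the discussion surrounding Theorem \ref{thm:hill1}. Pinning the symmetry dimension to exactly $3$ in this hyperbolic case---excluding flatness for a family whose curves are genuine conics---is the main obstacle, precisely because local diffeomorphisms can flatten conics (Theorem \ref{thm:parab} turns lines into parabolas), so ``the orbits are not lines'' carries no weight and a genuine invariant is needed.

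Finally, to show the elliptic and hyperbolic cases are not orbitally equivalent despite both having symmetry group $\cong\PSLt$, I would compare the two $\PSLt$-actions on the respective curve-spaces $\Sigma_k$. An orbital equivalence $\phi$ would induce a diffeomorphism $\phi_*\colon \Sigma_{k_1}\to\Sigma_{k_2}$ and conjugate one symmetry group to the other, giving an automorphism $\Phi$ of $\PSLt$ with $\phi_*(g\cdot x)=\Phi(g)\cdot\phi_*(x)$; consequently $\Phi$ would map stabilizers of curves to stabilizers of curves. But the stabilizer of a point of $\Sigma_k$ in $\SO^+_{2,1}$ is the stabilizer of a timelike vector, namely the compact group $\SO(2)$, in the two-sheeted (elliptic) case, and the stabilizer of a spacelike vector, namely the non-compact $\SO^+(1,1)\cong\R$, in the one-sheeted (hyperbolic) case. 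Since any automorphism of $\PSLt$ is a homeomorphism and hence preserves compactness of subgroups, no such $\Phi$ can match $\SO(2)$ with $\SO^+(1,1)$, and the two families are not orbitally equivalent.
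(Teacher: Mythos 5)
Your proposal follows essentially the same route as the paper's proof: both identify the two families with the level surfaces $a^2+b^2-c^2=\pm 4/B^2$ of the Minkowski norm (items \ref{ti:me}--\ref{ti:mh} of Table \ref{tab:pd}), hence invariant under the 3-dimensional Lorentz group $\mathrm{O}_{2,1}\cong$ (up to components) $\PSLt$; both reduce the elliptic case to Theorem \ref{thm:hill1} by a vertical translation onto a fixed-energy surface; and both distinguish the elliptic and hyperbolic cases by the isotropy type of the $\PSLt$-action (compact/elliptic versus non-compact/hyperbolic one-parameter subgroups), your compactness-under-automorphisms phrasing being a slightly more careful version of the paper's non-conjugacy claim. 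The one step you leave as a plan---certifying non-flatness of the one-sheeted (hyperbolic) family by computing a nonvanishing relative invariant $I_2$ of its second-order ODE---is precisely how the paper handles it as well, by appeal to the argument of Proposition \ref{prop:flatE}, so nothing essential is missing.
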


\begin{proof}The dual surface of such a family is a hyperboloid of  either 1 or 2 sheets, the  `hypersphere'  $a^2+b^2-c^2=\pm 4/B^2$  (items \ref{ti:me}-\ref{ti:mh} of Table \ref{tab:pd}). These  are the level surfaces of the Minkowski norm and are thus invariant under the Lorentz group $\mathrm{O}_{2,1}$, a 3-dimensional subgroup of the full 7-dimensional group of orbital symmetries. This shows that every such family admits at least a 3-dimensional group of symmetries. To show that the family is non-flat, and hence its symmetry group is at most 3-dimensional, we turn to  the same argument in the proof of Theorem \ref{thm:hill1}, explained in the Appendix (Proposition \ref{prop:flatE}). 

Note also that in the elliptic case    the said surface (a spacelike hypersphere) is a translation of the surface corresponding to Kepler orbits of fixed non-zero energy (items \ref{ti:ee}-\ref{ti:eh}). Since translations are generated by orbital symmetries (Theorem \ref{thm:main2}),  the non-flatness follows from Theorem \ref{thm:hill1}. 

The elliptic and hyperbolic cases are not orbitally equivalent, even locally,  because the two actions of the symmetry group $\PSLt$  are  non-equivalent: in the elliptic case the isotropy is an elliptic subgroup  and in the hyperbolic case it is a hyperbolic subgroup, which are non conjugate 1-parameter subgroups of $\PSLt$. 
\end{proof}

\paragraph{The `curved' Kepler problem}(item \ref{ti:cu} of Table \ref{tab:pd}).  There is an analogue of  the Kepler problem on surfaces  of constant curvature $k\neq 0$ (a sphere in $\R^3$ for $k>0$ and a spacelike `hypersphere' in $\Rto$ for $k<0$).  They  are characterized by the property that their {\em unparametrized} orbits centrally project to planar Kepler orbits. See \cite{AlbPr} for more details, where  the following proposition is proved.  

\begin{proposition}\label{prop:crvd}
Central projection maps  orbits of the `curved' Kepler problem on a surface  of constant curvature $k\neq 0$ to  Kepler orbits in $\R^2$. The energy $E_k$ of an orbit in the curved space is related to the energy $E$ of its centrally projected  orbit by
\[E_k = E + \frac{k}{2}M^2,\]
where $M$ is their common angular momentum value.  
\end{proposition}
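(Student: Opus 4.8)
The plan is to verify Proposition~\ref{prop:crvd} by the duality dictionary of Table~\ref{tab:pd}, reducing the claim to a statement about which dual surface in $\Rto$ represents the centrally projected family. Recall from item~\ref{ti:cu} that the projected images of curved Kepler orbits with energy $\pm E_k$ and curvature $k$ are asserted to fill the hyperboloid
\be
a^2+b^2-(c-|E_k|)^2=-E_k^2-k.
\ee
Comparing this with items~\ref{ti:ee}--\ref{ti:eh}, where a \emph{planar} Kepler family of energy $E$ corresponds to $a^2+b^2-(c-|E|)^2=-E^2$, the content of the proposition is that a single centrally projected orbit with curved energy $E_k$ lands on the planar-energy-$E$ locus precisely when the two hyperboloids agree along the relevant section, i.e. when the vertex heights and radii match up after accounting for the fixed angular momentum. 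First I would therefore compute, for a fixed orbit, the two quantities $M$ and $E$ of its planar projection directly from the dual point $(a,b,c)\in\Rto$ using Theorem~\ref{thm:EM}(c), namely $M^2=1/|c|$ and $E=(a^2+b^2-c^2)/(2|c|)$.

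Next I would pin down the central projection concretely. A surface of constant curvature $k$ sits either as a sphere in $\R^3$ (for $k>0$) or as a spacelike hypersphere $x^2+y^2-z^2=-1/k$ in $\Rto$ (for $k<0$); central projection from the appropriate center sends its geodesic-like Kepler orbits to the $xy$-plane. The key observation is that a curved Kepler orbit is again a plane section of the cone $\cC$ (or of the ambient quadric), so its projection is a planar Kepler orbit and hence corresponds to a well-defined dual point $(a,b,c)$. I would then express the curved energy $E_k$ in terms of this same dual point. The cleanest route is to use that central projection does not change the secting plane's trace on the $xy$-plane (the directrix) but rescales the ambient quadric by the factor encoding $k$; concretely, passing from $x^2+y^2=z^2$ to $x^2+y^2-z^2=-1/k$ shifts the defining quadratic form by $-k$ in the Minkowski norm $\|(a,b,c)\|^2$. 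This is exactly what turns the planar locus $a^2+b^2-(c-|E|)^2=-E^2$ into $a^2+b^2-(c-|E_k|)^2=-E_k^2-k$.

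With both energies written in terms of $(a,b,c)$, the relation $E_k=E+\tfrac{k}{2}M^2$ should fall out algebraically: substituting $M^2=1/|c|$ and $E=(a^2+b^2-c^2)/(2|c|)$ into the right-hand side gives $(a^2+b^2-c^2+k)/(2|c|)$, and I would check that the left-hand side $E_k$, computed from the shifted quadric, equals precisely this expression. In effect, replacing the cone's form by the curvature-shifted form adds $k$ to the numerator $a^2+b^2-c^2$ while leaving $|c|$ (hence $M$) unchanged, which is the whole content of the identity. I would finish by confirming that the two hyperboloids of item~\ref{ti:cu} and items~\ref{ti:ee}--\ref{ti:eh} are related by the same shift, so that fixed-$E_k$ families of curved orbits indeed correspond to the stated level surfaces.

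The main obstacle I anticipate is making the central-projection step genuinely rigorous rather than heuristic: one must justify carefully that a curved Kepler orbit really is a conic section of the ambient quadric whose trace on the $xy$-plane coincides with that of its planar image, and that the correct projection center and normalization produce exactly the Minkowski-norm shift by $-k$ with no stray scaling of $c$. Getting the signs and the treatment of the sphere case ($k>0$, where the ambient space is Euclidean $\R^3$ rather than $\Rto$) to match uniformly is the delicate part; the cited reference \cite{AlbPr} presumably supplies the projective-dynamics framework that makes this precise, and I would lean on it for the geometric input while supplying the short dual-space computation above to extract the energy relation.
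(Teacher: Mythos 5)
First, note that the paper does not prove Proposition \ref{prop:crvd} at all: it states explicitly that the proposition is proved in \cite{AlbPr}, and then uses it to deduce Corollary \ref{cor:k}, which is exactly item \ref{ti:cu} of Table \ref{tab:pd}. This matters for your proposal, because your argument runs the paper's logic backwards and in doing so becomes circular. You take item \ref{ti:cu} as your starting point (``recall from item \ref{ti:cu} that the projected images \dots\ fill the hyperboloid $a^2+b^2-(c-|E_k|)^2=-E_k^2-k$''), but that table entry is not an independent fact: in the paper it is recorded precisely as the consequence (Corollary \ref{cor:k}) of the proposition you are trying to prove, combined with formulas \eqref{eq:EM}. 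Indeed, the algebra you outline --- that $E+\frac{k}{2}M^2=(a^2+b^2-c^2+k)/(2|c|)$ via $M^2=1/|c|$ and $E=(a^2+b^2-c^2)/(2|c|)$, and that this matches the hyperboloid of item \ref{ti:cu} --- is just Corollary \ref{cor:k} read in reverse; it establishes nothing about the curved problem itself.

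The genuine missing content is the dynamical input, which your proposal never supplies. To prove the proposition one must (i) define the curved Kepler problem --- the potential and the conserved energy $E_k$ on the sphere for $k>0$, or on the spacelike hypersphere in $\Rto$ for $k<0$; (ii) show that its unparametrized orbits centrally project to planar Kepler orbits; and (iii) compute $E_k$ of a curved orbit in terms of data of its projection. Your key step --- ``passing from $x^2+y^2=z^2$ to $x^2+y^2-z^2=-1/k$ shifts the defining quadratic form by $-k$ in the Minkowski norm'' --- is exactly assertion (iii) in disguise: it is stated, not derived, and nothing in the proposal connects it to the actual energy of the curved flow. You acknowledge this yourself and propose to ``lean on \cite{AlbPr} for the geometric input,'' but that input is the whole theorem. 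In the end your proposal, like the paper, reduces to citing Albouy --- which is legitimate and matches what the paper actually does --- but the dual-space computation wrapped around that citation adds only the (circular) reverse of Corollary \ref{cor:k}, not an independent proof.
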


\begin{corollary}\label{cor:k}
Central projections of   Kepler orbits with energy  $\pm E_k$ on a surface of constant curvature $k$ are  parametrized by the surface $\{a^2 + b^2 -(c - |E_k|)^2=- E_k^2 - k \}\subset \Rto$, where $c>0$ represent orbits with negative energy $E_k = -|E_k|$ and $c<0$  orbits of positive energies, $E_k = |E_k|$. They are the projections to the $xy$-plane of sections of $\cC$ by planes tangent to the surface $(E_k^2 + k)(x^2 + y^2) =kz^2 + 2|E_k|z - 1$ in $\R^3$.
\end{corollary}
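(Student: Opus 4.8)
The plan is to reduce the statement to two computations: first, translating the curved-energy condition of Proposition \ref{prop:crvd} into an equation in the orbit-space coordinates $(a,b,c)$; and second, a projective-duality calculation identifying the envelope surface $S$, exactly as in the proof of Corollary \ref{cor:E}.

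First I would fix a point $(a,b,c)\in\Rot_+$ and compute the curved energy of the orbit dual to it. By Theorem \ref{thm:EM}(c) the planar orbit has $E=(a^2+b^2-c^2)/(2c)$ and $M^2=1/c$, so Proposition \ref{prop:crvd} gives
\[
E_k=E+\frac{k}{2}M^2=\frac{a^2+b^2-c^2+k}{2c}.
\]
Imposing $E_k=-|E_k|$ and completing the square in $c$ yields $a^2+b^2-(c-|E_k|)^2=-E_k^2-k$, the asserted surface with $c>0$. Imposing instead $E_k=+|E_k|$ gives $a^2+b^2-(c+|E_k|)^2=-E_k^2-k$; since $ax+by+cz=1$ and $ax+by-cz=1$ project to the same orbit, I would re-represent this second family by its reflection $c\mapsto-c$, which places it on the \emph{same} surface $a^2+b^2-(c-|E_k|)^2=-E_k^2-k$, now with $c<0$. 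This matches the sign convention in the statement and, for $k=0$, reduces to items \ref{ti:ee}--\ref{ti:eh} of Table \ref{tab:pd}.

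Second, to obtain the tangency description I would pass to homogeneous coordinates and dualize. A plane $ax+by+cz=1$ has plane-coordinates $(A:B:C:D)=(a:b:c:-1)$, so the substitution $a=-A/D$, $b=-B/D$, $c=-C/D$ turns the surface equation, after clearing denominators, into the dual quadric
\[
A^2+B^2-C^2-2|E_k|\,CD+kD^2=0 .
\]
Its coefficient matrix is block diagonal, with the identity in the $A,B$ block and $\left(\begin{smallmatrix}-1&-|E_k|\\-|E_k|&k\end{smallmatrix}\right)$, of determinant $-(E_k^2+k)$, in the $C,D$ block. Inverting and clearing the common factor $E_k^2+k$ gives the primal quadric
\[
(E_k^2+k)(X^2+Y^2)-kZ^2-2|E_k|ZW+W^2=0 ,
\]
which in the affine chart $W=1$ is precisely $(E_k^2+k)(x^2+y^2)=kz^2+2|E_k|z-1$, the surface $S$ of the statement. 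Since $(a,b,c)$ lies on the orbit-space surface if and only if the plane $ax+by+cz=1$ satisfies the dual equation, i.e.\ is tangent to $S$, the orbits in question are exactly the projections of the sections of $\cC$ by planes tangent to $S$.

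The substance here is light; the work is bookkeeping. The one genuinely delicate point is the sign convention that fuses the two energy signs onto a single surface via $c\mapsto-c$, which should be cross-checked against the two-branch phenomenon of Remark \ref{rmrk:ojo}. The matrix inversion degenerates exactly when $E_k^2+k=0$: there $S$ collapses to the horizontal plane $z=1/|E_k|$ and the dual quadric drops rank, so this borderline value of $k$ is best handled as a limit of the generic computation. As a sanity check, $k=0$ recovers the inscribed paraboloid $\cP$ of Corollary \ref{cor:E}.
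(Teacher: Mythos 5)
Your proof is correct and follows essentially the same route as the paper, which dismisses the statement as immediate from Proposition \ref{prop:crvd} and formulas \eqref{eq:EM}: your first computation is exactly that combination, and your dualization step reproduces the coefficient-matrix inversion already used in the proof of Corollary \ref{cor:E}. The sign bookkeeping ($c>0$ versus $c<0$ via the reflection $c\mapsto -c$) and the caveat about the degenerate case $E_k^2+k=0$ are details the paper leaves implicit, and you handle them correctly.
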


The proof is immediate from the last proposition and formulas \eqref{eq:EM}. Let us remark  also that Corollary \ref{cor:k} gives a pleasant dynamical interpretation of Kepler orbits of fixed minor axis: they are the central projections of zero energy orbits of an appropriate curved Kepler problem.

\subsection{A Keplerian version of the Tait-Kneser and 4 vertex theorems} 
\paragraph{Point-line duality.} The equation $ax+by=1$  defines a duality between the $xy$ and $ab$-planes. Namely, each point $(a,b)$ defines a line in the $xy$ plane and vice versa. Given a curve $C$ in one of these planes, its dual $C^*$ is a curve in the other plane, whose points correspond to the lines tangent to $C$. For example, the dual of the circle $x^2+y^2=R^2$ is the circle $a^2 + b^2=1/R^2.$ If $C$ is a smooth strictly convex curve, containing the origin in its interior,  so is $C^*$ and $C^{**}=C.$ This still works if $C$ does not contain the origin in its interior, provided we allow for curves in the projective plane, as we do in the sequel. The tangents to $C$ through  the origin then correspond to intersections of  $C^*$ with the `line at infinity'. 

\begin{proposition}\label{prop:dualC} $C$ is a Kepler orbit  if and only if  $C^*$ is a circle. If $C$ is an ellipse then  $C^*$ contains the origin, if it is a parabola then $C^*$ passes through the origin and if $C$ is an  hyperbola then the origin lies outside $C^*$. In the latter case, the two tangents to $C^*$ through the origin divide $C^*$ into two arcs, corresponding to the two branches of $C$. The larger arc corresponds to the `attractive branch' of $C$ and the shorter to the `repelling branch'. See Figure \ref{fig:duality}.
\end{proposition}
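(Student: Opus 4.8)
The plan is to recognize the duality $ax+by=1$ as polar reciprocation with respect to the unit circle centered at the origin, i.e.\ ordinary projective duality in the affine chart where a point $(a,b)$ names the line with homogeneous coordinates $[a:b:-1]$. Since a Kepler orbit is a plane conic, its dual is again a conic, obtained from the adjugate (inverse) of the defining symmetric matrix. First I would clear the radical in the Kepler equation $ax+by+cr=1$ by squaring, writing the full conic (both branches) as
\[ c^2(x^2+y^2)=(1-ax-by)^2, \]
whose homogeneous matrix is
\[ Q=\begin{pmatrix} c^2-a^2 & -ab & a\\ -ab & c^2-b^2 & b\\ a & b & -1\end{pmatrix}. \]

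Next I would compute $\operatorname{adj}(Q)$ and substitute the line-coordinates $[u:v:-1]$ of the dual point $(u,v)$ into the dual quadratic form $\mathbf{w}^{\top}\operatorname{adj}(Q)\,\mathbf{w}=0$. A short cofactor calculation gives (after dividing by the nonzero factor $c^2$) the equation of $C^*$ as
\[ u^2+v^2-2au-2bv+(a^2+b^2-c^2)=0,\qquad\text{i.e.}\qquad (u-a)^2+(v-b)^2=c^2. \]
Thus $C^*$ is a circle with center $(a,b)$ and radius $|c|$, proving the first assertion in one direction. The position of the origin relative to $C^*$ is governed by comparing the distance $\sqrt{a^2+b^2}$ from the origin to the center with the radius $|c|$; by Theorem~\ref{thm:EM}(b) this ratio is exactly the eccentricity $e$, so the origin lies inside, on, or outside $C^*$ according as $e<1$, $e=1$, or $e>1$, that is, according as $C$ is an ellipse, parabola, or hyperbola. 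For the converse I would invoke the involutivity $C^{**}=C$ of polar duality, valid for these curves once we pass to the projective plane as the statement permits: the computation shows every circle of positive radius arises as $C'^*$ for a unique Kepler orbit $C'$ (take $(a,b)$ its center and $|c|$ its radius), so if $C^*$ is a circle then $C^*=C'^*$, and dualizing once more gives $C=C'$, a Kepler orbit.

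The delicate part, and the step I expect to require the most care, is the description of the two arcs in the hyperbolic case, since it mixes the projective and metric pictures. The key dictionary entry is that points of $C^*$ correspond to tangent lines of $C$, while the line at infinity of the $xy$-plane dualizes to the origin of the $ab$-plane. Hence the two tangent lines to $C^*$ drawn from the origin correspond to the two points at infinity of the hyperbola (its asymptotic directions), and their points of tangency on $C^*$ correspond to the two asymptotes of $C$; since the asymptotes separate the tangent lines of one branch from those of the other, the two arcs into which these tangency points divide $C^*$ correspond precisely to the two branches. To identify which arc is larger, I would use that the origin is exterior to $C^*$ (hyperbolic case), so the tangency points cut off a minor arc facing the origin and a major arc away from it. A point $(u,v)\in C^*$ names the tangent line $ux+vy=1$, whose distance from the origin is $1/\sqrt{u^2+v^2}$; points on the near (minor) arc have the smaller norm and so correspond to tangent lines lying far from the focus, which touch the \emph{repelling} branch, whereas the far (major) arc collects the tangent lines close to the focus, touching the \emph{attractive} branch. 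This yields exactly the claimed pairing of the larger arc with the attractive branch and the smaller arc with the repelling branch. The only genuine obstacle is checking this near/far monotonicity cleanly and handling the asymptotic limits projectively; everything else reduces to the routine conic-dual computation above.
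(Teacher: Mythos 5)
Your proof is correct, but it reaches the central fact by a genuinely different route than the paper. The paper gets the circle $C^*$ almost for free from its Minkowski dictionary: by Proposition \ref{prop:null}, the Kepler orbits tangent to $C$ form the null cone with vertex $\v=(a,b,c)\in\Rto$, and intersecting that cone with the plane $\{c=0\}$ (which parametrizes lines) yields the circle of radius $c$ centered at $(a,b)$. You instead square the defining equation to get the full conic $c^2(x^2+y^2)=(1-ax-by)^2$ and compute its dual by the adjugate of the $3\times 3$ matrix; your cofactor computation is right ($\mathrm{adj}(Q)$ has the common factor $-c^2$, and $\det Q=-c^4\neq 0$), giving $(u-a)^2+(v-b)^2=c^2$. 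This buys you two things the paper's argument lacks: it is self-contained (no reliance on Proposition \ref{prop:null}, which the paper itself flags in Remark \ref{rmrk:alert} as requiring case-by-case repair for hyperbolic orbits — working with the full squared conic sidesteps exactly that issue), and you make the converse direction explicit via $C^{**}=C$, which the paper leaves implicit. Your trichotomy via $e=\sqrt{a^2+b^2}/|c|$ is equivalent to the paper's. On the hyperbolic arcs, both arguments identify the tangents to $C^*$ through the origin with the asymptotes (dualizing the points at infinity of $C$); for the major/minor identification, the paper says only that the attractive branch is ``nearer the origin,'' while you translate it into the distance formula $1/\sqrt{u^2+v^2}$ — sharper, but note that the key separation claim (every tangent to the attractive branch is closer to the focus than every tangent to the repelling branch, the threshold being the focus–asymptote distance $b$) is still asserted rather than proved in your write-up, which you acknowledge; it is true (e.g.\ via the pedal-circle property of conics with respect to a focus), and your level of rigor here matches the paper's.
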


\begin{figure}[!htb]   
\centering\includegraphics[width=.9\textwidth]{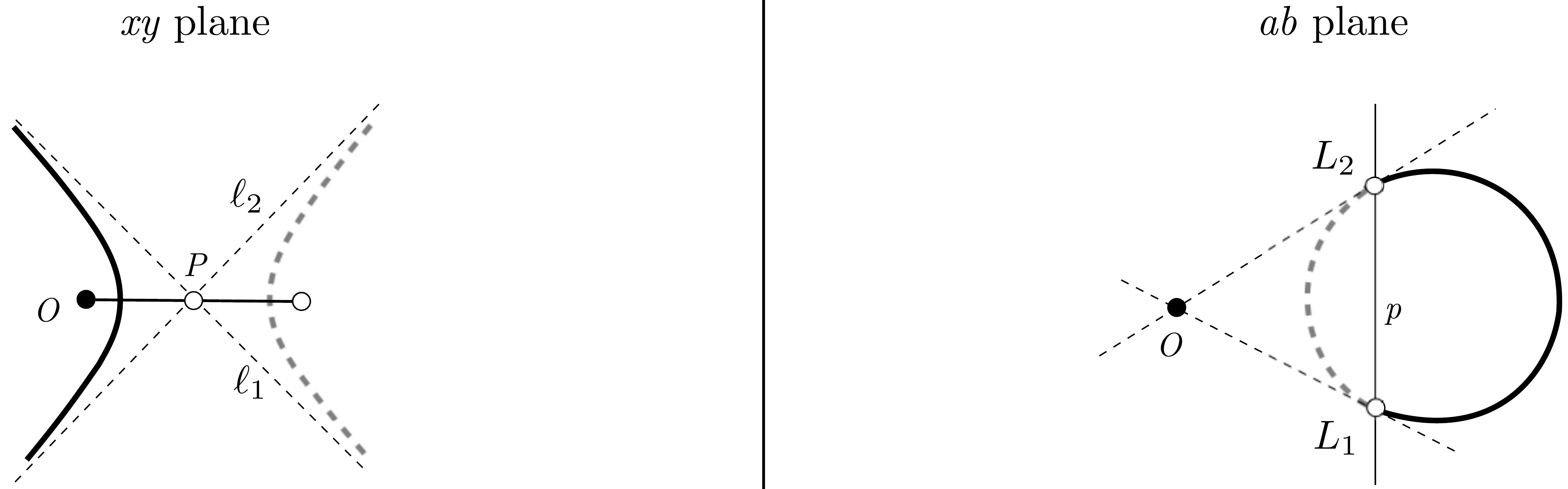}
 \caption{\small Proposition \ref{prop:dualC}}. \label{fig:duality} 
\end{figure}

\begin{proof}Let $\v=(a,b,c)\in\Rto_+$ be the point corresponding  to $C$. The intersection of the null cone through $\v$ with the $ab$ plane is a circle of radius $c$ centered at $(a,b)$. See Figure \ref{fig:null_pencil} (right). The    points of this circle correspond to the lines tangent to $C$ (a special case of Proposition \ref{prop:null}), so the circle is $C^*$. For a parabola, one of its tangents is the line at infinity, whose dual is the origin of the $ab$ plane.

When $C$ is a hyperbola it has  two tangents, its asymptotes, whose tangency points with $C$ are two points on the line at infinity of the $xy$ plane. The two asymptotes correspond to two points on $C^*$ and their intersection points with the line at infinity correspond to the two tangents to $C^*$ at these points, passing through the origin of the $ab$ plane. The longer arc of $C^*$ corresponds to the attractive  branch of $C$  because the latter is nearer the origin then the repelling branch. \end{proof}

\begin{remark} The same warning as in Remark \ref{rmrk:alert} applies here, although it is simpler to fix: if $C$ is a Kepler hyperbola then $C^*$ is not a circle, but rather a circular arc,  corresponding to the Kepler branch of the `full' hyperbola, as shown in Figure \ref{fig:duality}. The complementary arc of the circle corresponds to the 'repelling branch'. 
\end{remark}

\paragraph{Osculating circles.} A plane curve with non-vanishing curvature admits at each of its points an {\em osculating circle}, tangent to the
curve at this point to 2nd order (its curvature coincides with that of the curve at this point). Sometimes the osculating circle is {\em hyperosculating}, i.e. tangent  to order  higher than two. This occurs at the critical points of the curvature and such points are called {\em vertices}. For example, a (non-circular) ellipse has 4 vertices, corresponding to two minima and two maxima of the curvature. The 4-vertex theorem states that {\em on any convex simple planar closed curve there are  at least 4 vertices.}  A related theorem is the Tait-Kneser theorem,  stating that {\em along any vertex-free curve segment with non-vanishing curvature the osculating circles are pairwise disjoint and nested.} Both theorems are  over 100 years old and there are many variations \cite{DGPV, GTT}.

Using Proposition \ref{prop:dualC} above, we shall obtain a Keplerian version of these theorems. To this end, we consider a strictly convex  star-shaped closed curve $\gamma$, that is $\gamma, \gamma'$ and $\gamma', \gamma''$ are everywhere linearly independent (these are parametrization independent conditions). These conditions imply that one can define at each point along $\gamma$ its osculating Kepler orbit, tangent to the curve to 2nd order. A point where the osculating Kepler orbit is hyperosculating is a {\em Kepler vertex}. 

\begin{theorem}\label{thm:TK} There are at least 4 Kepler vertices along $\gamma$. Along any vertex free segment of $\gamma$ the osculating Kepler orbits are pairwise disjoint and nested. See Figure \ref{fig:TK}
\end{theorem}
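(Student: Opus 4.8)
The plan is to dualize everything through the point-line correspondence $ax+by=1$ and thereby reduce the statement to the classical $4$-vertex and Tait--Kneser theorems applied to the dual curve $\gamma^*$. First I would check that $\gamma^*$ is a legitimate input for the classical theorems. The hypothesis that $\gamma,\gamma'$ are everywhere independent says precisely that no tangent line of $\gamma$ passes through the origin, so each dual point is finite and $\gamma^*$ is a well-defined closed curve in the $ab$-plane; the hypothesis that $\gamma',\gamma''$ are everywhere independent (non-vanishing curvature, strict convexity) guarantees, by the standard properties of projective duality of convex curves, that $\gamma^*$ is again a smooth, strictly convex, simple closed curve, with the origin in its interior.

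The key mechanism is that the point-line duality is a contact transformation: it identifies the tangent lines of a curve with the points of its dual and, being the affine form of the projective polarity, it preserves the order of contact between curves at corresponding points. Consequently, if $K^*$ denotes the osculating circle of $\gamma^*$ at a point $q=p^*$ --- the circle having second-order contact with $\gamma^*$ at $q$ --- then its dual $K$ is, by Proposition \ref{prop:dualC}, a Kepler orbit, and it has second-order contact with $\gamma$ at $p$; that is, $K$ is exactly the osculating Kepler orbit of $\gamma$ at $p$. Because contact order is preserved, the points of third-order contact match up: $p$ is a Kepler vertex of $\gamma$ if and only if $q=p^*$ is an ordinary vertex of $\gamma^*$. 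The first assertion follows at once, since the classical $4$-vertex theorem yields at least four vertices on the convex curve $\gamma^*$, hence at least four Kepler vertices on $\gamma$.

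For the second assertion I would transfer the Tait--Kneser conclusion. Along a Kepler-vertex-free arc of $\gamma$ the corresponding arc of $\gamma^*$ is vertex-free, so by the Tait--Kneser theorem its osculating circles $\{S_t\}$ are pairwise disjoint and nested. It then remains to observe that ``disjoint and nested'' is self-dual for convex curves: under the duality a common point of two curves corresponds to a common tangent line of their duals, and two disjoint nested convex curves have no common tangent line (a line tangent to the outer curve misses the inner one, while a line tangent to the inner one is a proper secant of the outer), so the duals $\{S_t^*\}$ have no common point; moreover polar duality reverses inclusion, so nesting is preserved, in the reverse order. Dualizing the nested disjoint family of osculating circles of $\gamma^*$ therefore produces the nested disjoint family of osculating Kepler orbits of $\gamma$.

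The main obstacle is the careful justification of the two transfer principles. The contact-order preservation is the crux: one must verify that the affine polarity $ax+by=1$ really does send $k$-th order contact to $k$-th order contact, most cleanly by exhibiting it as the restriction of the projective polar duality of $\RP^2$, which acts as a contact transformation on $1$-jets. The second delicate point is that osculating Kepler orbits need not be ellipses --- as the dual circles grow past the origin of the $ab$-plane the orbit type changes to parabola and then hyperbola --- so the ``disjoint and nested'' conclusion, together with the self-duality argument above, must be read in the projective plane (as in Remark \ref{rmrk:alert} and Proposition \ref{prop:dualC}), where every Kepler orbit closes up to a convex conic.
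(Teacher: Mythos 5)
Your proposal is correct and takes essentially the same route as the paper: dualize $\gamma$ through the point-line correspondence $ax+by=1$, use Proposition \ref{prop:dualC} together with preservation of contact order to identify osculating Kepler orbits of $\gamma$ with osculating circles of $\gamma^*$ (and Kepler vertices with Euclidean vertices), then pull back the classical 4-vertex and Tait--Kneser theorems. The paper's proof is a brief sketch of exactly this argument; your version merely fills in the checks it leaves implicit (admissibility of $\gamma^*$, transfer of disjointness and nesting, and the projective reading needed when the osculating orbits become parabolas or hyperbolas, as in Remark \ref{rmrk:alert}).
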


The proof reduces to the observation that point-line duality preserves order of contact between curves, hence, by Proposition \ref{prop:dualC}, it maps  the osculating Kepler orbit of $\gamma$  to the osculating circle of $\gamma^*$, and the same for hyperosculating Kepler orbits, so it maps Euclidean vertices to Kepler  vertices. It also maps nested Kepler orbits to nested circles, so the theorem is reduced to the Euclidean version. 
In a recent article we gave a different proof of this theorem \cite{BT}. 

\begin{figure}[!htb]    
\centering\includegraphics[width=.8\textwidth]{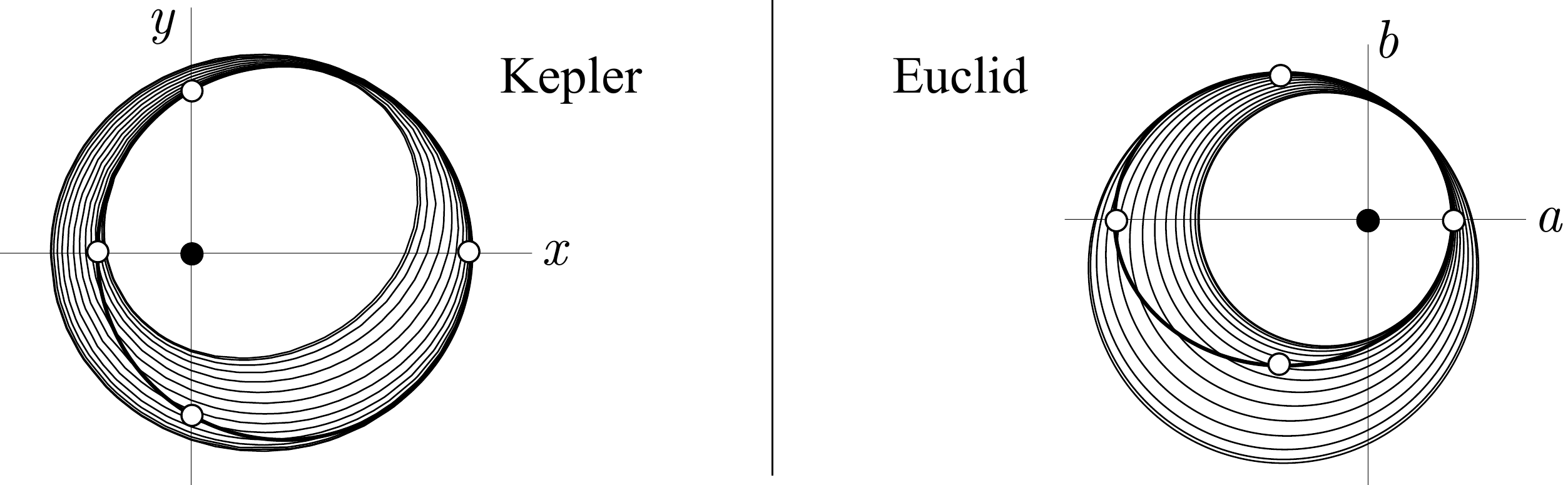}
 \caption{Kepler-Euclid duality. Left: a curve is drawn in the Kepler plane (a  circle centered on the $x$-axis) and the nested family of osculating Kepler orbits along its arc in the 1st quadrant, between 2 of its 4 Kepler vertices (the white dots, intersections of the circle with the coordinate axes).  Right: the dual of the left circle is a Kepler ellipse, with  4 euclidean vertices and osculating circles between two of them, duals of the osculating ellipses on the left.}
 \label{fig:TK}
\end{figure}

\subsection{A minor axis variant of Lambert's Theorem}

Lambert's Theorem (1761)  is a statement about  the  elapsed time along a Keplerian arc \cite{AlbLamb, Se}. Let us recall this theorem. 
Consider  a time parametrized Kepler ellipse, i.e. a solution   $\r(t)$   of  $\ddot\r=-\r/r^3$, with major axis $A$. We fix   two moments    $t_1< t_2$, the corresponding   points  $\r_1=\r(t_1),\r_2=\r(t_2)$, the chord distance $r_{12}=\|\r_1-\r_2\|$,    the distances to the origin  $r_i=\|\r_i\|$   and the   time lapse  $\Delta t=t_2-t_1$. See Figure \ref{fig:lamb}(a). 

\begin{figure}[!htb]    
\centering\includegraphics[width=\textwidth]{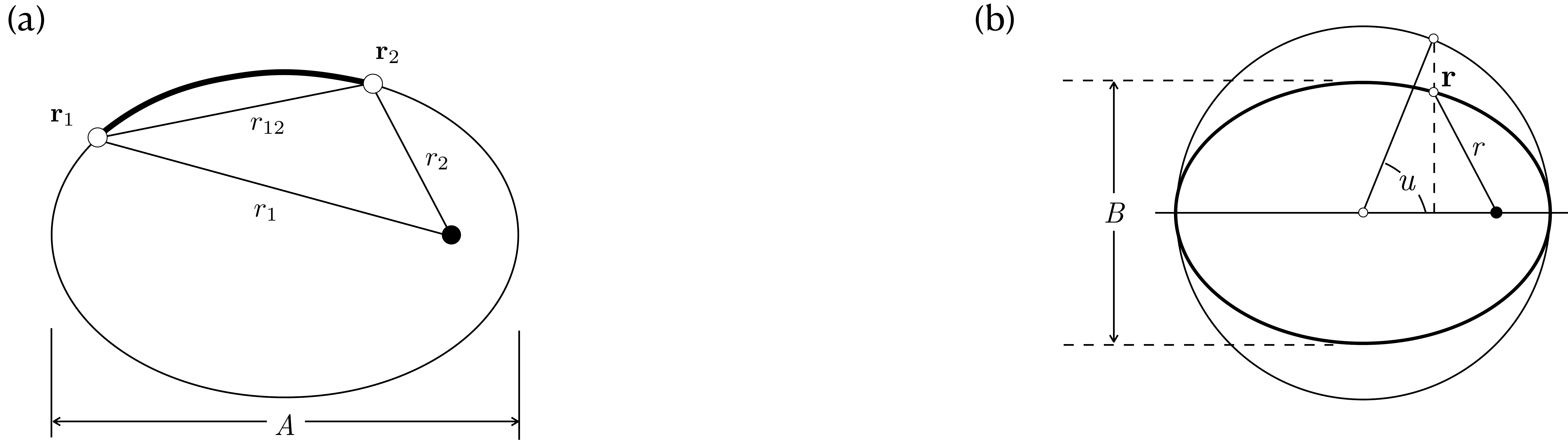}
 \caption{\small (a)  Lambert's Theorem. (b) The  eccentric anomaly $u$.  }
 \label{fig:lamb}
\end{figure}

\paragraph{Lambert's theorem.} {\em $\Delta t$ is a function of $r_{12}, r_1 + r_2$ and $A$.}

\mn  

Clearly, for elliptical orbits the said function is only  well defined modulo the period of the orbit (a function of $A$). The main thrust  of the theorem is that $\Delta t$ does not depend on the individual values of $r_1, r_2$. Thus one can deform the orbit, keeping the three quantities $r_{12}, r_1 + r_2,A$ fixed, into a linear orbit, for which the time $\Delta t$ is easy to write as  an explicit integral. 

Our  `minor axis variant' of this theorem  involves a different well-known parametrization    of Kepler orbits, by the {\em eccentric anomaly} $u$, see Figure \ref{fig:lamb}(b). For simplicity, we shall only deal with Kepler ellipses,  although the statement and proof can be easily modified for parabolic and hyperbolic orbits. 
Consider a Kepler ellipse with minor axis $B$, two values   $u_1<u_2$,  $\r_1=\r(u_1),\r_2=\r(u_2)$, $r_{12}=\|\r_1-\r_2\|$,     $r_i=\|\r_i\|$  and    $\Delta u=u_2-u_1$.

\begin{theorem}\label{thm:Lambert}
 $\Delta u$ is a function of $r_{12}, r_1 - r_2$ and $B$, well defined modulo  $2\pi$. Explicitly, 
\be\label{eq:lamb}
 B^2 \sin^2\frac{\Delta u}{2} = r_{12}^2 - (r_1 - r_2)^2.
\ee
\end{theorem}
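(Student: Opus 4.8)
The plan is a direct computation in the eccentric anomaly parametrization; the Minkowski geometry of $\Rto$ plays no role here, since the statement concerns the timing variable $u$ rather than the unparametrized shape of the orbit. First I would fix notation adapted to the ellipse: write $\alpha$ for the semi-major axis, $\beta = B/2$ for the semi-minor axis, and $\gamma = \sqrt{\alpha^2 - \beta^2}$ for the center-to-focus distance. With the attracting focus placed at the origin, the eccentric anomaly parametrizes the orbit as $\r(u) = (\alpha\cos u - \gamma,\ \beta\sin u)$, obtained from the center-based parametrization $(\alpha\cos u,\ \beta\sin u)$ by translating the focus to $0$. A one-line computation using $\beta^2 = \alpha^2 - \gamma^2$ yields the classical radius formula $r(u) = \|\r(u)\| = \alpha - \gamma\cos u$, which is the only property of $u$ I will use.

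Next I would evaluate the two quantities in \eqref{eq:lamb} at $u_1, u_2$. From the parametrization, $\r_1 - \r_2 = (\alpha(\cos u_1 - \cos u_2),\ \beta(\sin u_1 - \sin u_2))$, so
\[
r_{12}^2 = \alpha^2(\cos u_1 - \cos u_2)^2 + \beta^2(\sin u_1 - \sin u_2)^2,
\]
while the radius formula gives $r_1 - r_2 = \gamma(\cos u_2 - \cos u_1)$, hence $(r_1 - r_2)^2 = \gamma^2(\cos u_1 - \cos u_2)^2$.

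The key step, and the reason the \emph{minor} axis appears, is the subtraction: forming $r_{12}^2 - (r_1 - r_2)^2$ replaces the coefficient $\alpha^2$ of $(\cos u_1 - \cos u_2)^2$ by $\alpha^2 - \gamma^2 = \beta^2$, matching the coefficient of the sine term and allowing a trigonometric collapse. Thus
\[
r_{12}^2 - (r_1 - r_2)^2 = \beta^2\big[(\cos u_1 - \cos u_2)^2 + (\sin u_1 - \sin u_2)^2\big] = \beta^2\big(2 - 2\cos\Delta u\big) = 4\beta^2\sin^2\tfrac{\Delta u}{2},
\]
and since $B = 2\beta$ this is precisely $B^2\sin^2(\Delta u/2)$. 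Because only $\sin^2(\Delta u/2)$ enters, $\Delta u$ is determined by $r_{12}$, $|r_1 - r_2|$ and $B$ modulo $2\pi$ (and up to sign), which is the asserted dependence, in parallel with Lambert's dependence of $\Delta t$ on $r_{12}$, $r_1 + r_2$ and $A$.

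I do not expect a serious obstacle: the argument is elementary once the parametrization is set up correctly. The only points demanding care are placing the \emph{focus} (not the center) at the origin and fixing the convention $B = 2\beta$ for the full minor axis, which is exactly what makes the coefficient identity $\alpha^2 - \gamma^2 = \beta^2$ produce the collapse. For completeness I would add a short remark that the paper's claim of an extension to parabolic and hyperbolic orbits is handled by the same computation in the corresponding parabolic/hyperbolic anomaly (a routine modification, with the obvious sign changes), rather than by any genuinely new idea.
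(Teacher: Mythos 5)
Your computation is correct, and it is essentially the elementary argument that the paper itself records in the remark immediately following its proof of Theorem \ref{thm:Lambert} (where the authors ``invite the reader to compare'') --- but it is genuinely different from the proof the paper features. The paper's proof lifts the ellipse $\mathcal{E}$ to $\tilde{\mathcal{E}}\subset\cC_+=\{x^2+y^2=z^2,\ z>0\}$ via $\r_i\mapsto\tilde\r_i=(\r_i,r_i)$ and observes that the right-hand side $r_{12}^2-(r_1-r_2)^2$ is precisely the Minkowski interval $\|\tilde\r_1-\tilde\r_2\|^2$, hence invariant under the Lorentz group $\mathrm{O}_{2,1}$; it then shows the left-hand side is Lorentz-invariant as well ($B$ by item \ref{ti:me} of Table \ref{tab:pd}, and $\Delta u$ via a lemma asserting that on $\cC$ one has $dx^2+dy^2-dz^2=(r\,d\theta)^2$ while on $\mathcal{E}$ one has $r\,d\theta=(B/2)\,du$), which reduces the identity to the circular case, where it is immediate. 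Your route instead verifies the identity directly in the eccentric-anomaly parametrization: the radius formula $r=\alpha-\gamma\cos u$, the expression for $r_{12}^2$, and the collapse $\alpha^2-\gamma^2=\beta^2$ leading to $4\beta^2\sin^2(\Delta u/2)$ are all correct, as is your observation that only $\sin^2(\Delta u/2)$ is determined, giving the ``modulo $2\pi$'' (and up to sign) qualification. What your direct route buys is brevity and self-containedness, with the role of the minor axis made algebraically transparent. What the paper's route buys is a conceptual explanation of why the identity holds and why exactly these quantities enter: $r_{12}^2-(r_1-r_2)^2$, $B$ and $\Delta u$ are each $\mathrm{O}_{2,1}$-invariants of the lifted configuration, so the general formula is forced by its validity for circles --- which is precisely the Kepler--Minkowski dictionary the paper is built around. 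Both routes handle the parabolic and hyperbolic cases by analogous modifications, as you note at the end.
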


\newcommand{\cE}{\mathcal{E}}
\newcommand{\SOto}{\mathrm{SO}_{2,1}}

\begin{proof} We consider an ellipse $\cE$ with minor axis $B$, parametrized by $u$, as in Figure \ref{fig:lamb}(b). We  lift $\cE$ to $\tilde\cE\subset \cC_+$ and $\r_i$ to  $\tilde\r_i=(\r_i, r_i)\in\tilde\cE.$ The right-hand side of Equation \eqref{eq:lamb} is
then $\|\tilde\r_1-\tilde\r_2\|^2$ (using Minkowski's norm),  hence  is invariant under the Lorentz group $\mathrm{O}_{2,1}$. We claim that the left hand  is  invariant as  well, hence it is enough to check formula \eqref{eq:lamb} in the circular case, for which it is immediate. 

To establish the said invariance, we first note that $B$ is $\mathrm{O}_{2,1}$-invariant by  item \ref{ti:me} of Table \ref{tab:pd}. The invariance of $\Delta u$ follows from the next lemma. 
\begin{lemma}\ \\
\vspace{-.5cm}
\begin{enumerate}
\item   Restricted to $\cC$, $dx^2+dy^2-dz^2=(r d\theta)^2.$
\item  Restricted to   $\cE$,   $rd\theta=(B/2)du.$
\end{enumerate} 
\end{lemma}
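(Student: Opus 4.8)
The lemma has two parts, both of which I would attack by direct computation in suitable coordinates; the geometric content is light, and the real work is bookkeeping with the lift to the cone $\cC$.

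For part (1), the plan is to parametrize the upper cone $\cC_+$ by cylindrical coordinates adapted to its defining equation $x^2+y^2=z^2$. Writing $x=\rho\cos\theta$, $y=\rho\sin\theta$, $z=\rho$ (so that $r=\sqrt{x^2+y^2}=\rho$ on the cone), I would differentiate to get $\d x = \cos\theta\,\d\rho - \rho\sin\theta\,\d\theta$, $\d y = \sin\theta\,\d\rho + \rho\cos\theta\,\d\theta$, and $\d z = \d\rho$. Substituting into $\d x^2 + \d y^2 - \d z^2$, the $\d\rho^2$ terms cancel (this is precisely where the null/degenerate nature of the cone enters: the generators are null lines), leaving $\rho^2\,\d\theta^2 = r^2\,\d\theta^2 = (r\,\d\theta)^2$. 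This is the claimed identity and it is a one-line substitution once the parametrization is fixed.

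For part (2), I would set up the eccentric-anomaly parametrization of the ellipse $\cE$ explicitly. Placing $\cE$ in standard form with semi-major axis $A/2$ and semi-minor axis $B/2$, the eccentric anomaly gives $\r(u) = (\,c_0 + (A/2)\cos u,\ (B/2)\sin u\,)$ for the appropriate center offset $c_0$ (the focus is at the origin, so $c_0 = -(A/2)e$ with $e$ the eccentricity and $B/2 = (A/2)\sqrt{1-e^2}$). The quantity $r\,\d\theta$ is the angular part of the arc as seen from the focus; since $r\,\d\theta = x\,\d y - y\,\d x$ divided by $r$ is not quite it, I would instead use $r\,\d\theta$ in the form $r^2\,\d\theta = x\,\d y - y\,\d x$, so $r\,\d\theta = (x\,\d y - y\,\d x)/r$. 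The plan is to compute $x\,\d y - y\,\d x$ directly from the eccentric-anomaly expressions and simplify; the algebra should collapse (using $B^2/4 = (A^2/4)(1-e^2)$ and $r = (A/2)(1 - e\cos u)$, a standard identity for the eccentric anomaly) to $x\,\d y - y\,\d x = (B/2)\,r\,\d u$, whence $r\,\d\theta = (B/2)\,\d u$ as claimed.

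The main obstacle is part (2): verifying that the messy expression for $x\,\d y - y\,\d x$ in eccentric-anomaly coordinates simplifies to $(B/2)\,r\,\d u$ requires correctly invoking the relation $r = (A/2)(1 - e\cos u)$ between the radius and the eccentric anomaly, and being careful that the focus (not the center) sits at the origin so that $r$, $\theta$ are measured from the correct point. Part (1), by contrast, is essentially immediate once the cone is parametrized by its null generators. I would also remark that part (2) is where the minor axis $B$ enters the final formula \eqref{eq:lamb}, so it is the substantive half of the lemma; part (1) merely converts the Minkowski chord length on $\cC_+$ into an intrinsic angular measurement on the orbit.
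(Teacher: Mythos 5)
Your proposal is correct, and part (1) coincides with the paper's own argument (parametrize the cone in polar coordinates, note $\d z^2=\d r^2$ there, and subtract). For part (2) you take a mildly but genuinely different computational route. The paper computes $\d x^2+\d y^2$ twice from the same parametrization $x=a(\cos u-e)$, $y=b\sin u$, $r=a(1-e\cos u)$: once directly, giving $(a^2\sin^2u+b^2\cos^2u)\,\d u^2$, and once in polar form $\d r^2+r^2\d\theta^2=a^2e^2\sin^2u\,\d u^2+r^2\d\theta^2$; equating the two and using $b^2=a^2(1-e^2)$ yields $r^2\d\theta^2=b^2\d u^2$. You instead compute the sectorial form: $x\,\d y-y\,\d x=ab\left[(\cos u-e)\cos u+\sin^2u\right]\d u=ab(1-e\cos u)\,\d u=b\,r\,\d u$, so $r\,\d\theta=(x\,\d y-y\,\d x)/r=b\,\d u$. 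Your collapse does work exactly as you predicted, and it buys two small things: the algebra is a one-line telescoping rather than a comparison of two quadratic expressions, and you obtain the \emph{signed} identity $r\,\d\theta=(B/2)\,\d u$ directly, whereas the paper's method only gives it up to sign (which is all that is needed for Equation \eqref{eq:lamb}, but is slightly weaker). Both routes rest on the same two inputs — the eccentric-anomaly parametrization with the focus at the origin and the relation $r=a(1-e\cos u)$ — so be sure to either cite that relation as standard or verify it (it follows from $r^2=a^2(\cos u-e)^2+b^2\sin^2u=a^2(1-e\cos u)^2$); your proposal correctly flags this as the place where care with focus-versus-center is required.
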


\mn{\em Proof.} The 1st statement is a simple calculation, using   $x=r\cos\theta, y=r\sin \theta$ and $x^2+y^2=z^2$. For the 2nd statement, from Figure \ref{fig:lamb} we have 
$x  = a ( \cos u - e), 
y =  b \sin u,  r = a ( 1 - e \cos u )$, where $a,b$ are the major and minor semi axes of $\cE$ (respectively) and $e=\sqrt{a^2-b^2}/a$ the eccentricity.  From the  first two equations  follows $dx^2+dy^2=(a^2(\sin u)^2+b^2(\cos u)^2)du^2$ and from the last follows   $dx^2+dy^2=dr^2+r^2d\theta^2=
a^2e^2(\sin u)^2du^2+r^2d\theta^2.$ Equating these two expressions for $dx^2+dy^2$ we obtain $b^2du^2=r^2d\theta^2$, as needed. This  completes the proof of the lemma and also the theorem. 
\end{proof}

\begin{remark} Formula \eqref{eq:lamb} is an elementary geometric statement about ellipses, 
so one expects to find an elementary proof. Indeed, we sketch such a proof here and invite the reader to 
compare it with our proof above. 
Let $a=A/2$, $b=B/2$  (the major and minor semi-axes), $e=\sqrt{a^2-b^2}/a$ (the eccentricity).
Then $  r_j = a( 1 - e\cos u_j) $ and  $r_{12}^2 = a^2(\cos u_1 - \cos u_2)^2 + 
b^2(\sin u_1 - \sin u_2)^2, $ from which follows $r_{12}^2 - ( r_1 - r_2)^2  = 
b^2 \left[  ( \cos u_1 - \cos u_2)^2 + ( \sin u_1 - \sin  u_2 )^2 \right]
 = 
 B^2 \sin^2 (\Delta u/2).$
\end{remark}

\subsection{Kepler fireworks}
The following intriguing result  is well known.

\begin{proposition}\label{prop:fireworks}Consider the family of Kepler ellipses of fixed (negative) energy, passing through a fixed point. Then there exists a  Kepler ellipse, with second focus at the fixed point,   tangent to all ellipses of the family (the `envelope' of the family). See Figure \ref{fig:fw}(c). 
\end{proposition}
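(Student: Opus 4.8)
The plan is to translate both the family and its putative envelope into the Minkowski orbit space $\Rto$ and to exploit the null-cone description of tangency. Fix the energy $E=-|E|<0$ and the point $P=(x_0,y_0)$ with $r_0=\sqrt{x_0^2+y_0^2}$. By item \ref{ti:ee} of Table \ref{tab:pd} the Kepler ellipses of energy $E$ fill the upper sheet $\cH$ of the hyperboloid $a^2+b^2-c^2=-2|E|c$, and by item \ref{it:para} those through $P$ fill the parabolic plane $\pi\colon x_0a+y_0b+r_0c=1$. Hence the family in question is the conic $\Gamma=\cH\cap\pi\subset\Rto$. On the other hand, by item \ref{it:nullcone}, the Kepler orbits tangent to a fixed Kepler orbit $C_0$ are exactly those whose $\Rto$-points form the null cone with vertex $\v_0=(a_0,b_0,c_0)$ corresponding to $C_0$. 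So to produce a single orbit tangent to every member of the family it suffices to find a vertex $\v_0$ whose null cone contains $\Gamma$.

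First I would hunt for such a vertex. Writing $\langle\,\cdot,\cdot\,\rangle$ for the Minkowski bilinear form, the null cone of $\v_0$ is $\|\v\|^2-2\langle\v,\v_0\rangle+\|\v_0\|^2=0$; substituting the defining relation $\|\v\|^2=-2|E|c$ of $\cH$ turns this into a single affine-linear equation in $\v$. Since $\Gamma$ is a nondegenerate conic it spans $\pi$, so this affine equation can vanish on $\Gamma$ only if it is proportional to the equation of $\pi$. Matching coefficients yields $a_0=tx_0$, $b_0=ty_0$, $c_0=|E|-tr_0$, together with the scalar constraint $\|\v_0\|^2=2t$, which simplifies to $2t(1-|E|r_0)=-|E|^2$, i.e. $t=|E|^2/\bigl(2(|E|r_0-1)\bigr)$. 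Thus $\v_0$ is uniquely determined (whenever $|E|r_0\neq1$), and $\Gamma$ indeed lies on the null cone with this vertex, so the corresponding orbit $C_0$ is tangent to every member of the family.

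Next I would check that $C_0$ is the claimed ellipse. Nonemptiness of the family forces $P$ to lie within one major axis of $O$, i.e. $r_0<1/|E|=A$; then $|E|r_0<1$, so $t<0$, whence $\|\v_0\|^2=2t<0$ and $c_0=|E|-tr_0>0$. By Corollary \ref{cor:dc} and Theorem \ref{thm:EM} the vertex $\v_0$ then lies in $\Rto_+$ inside the light cone, so $C_0$ is a genuine Kepler ellipse, and a short computation with \eqref{eq:EM} gives its major axis as $2A-r_0$. To pin the second focus at $P$, note that the perihelion of $a_0x+b_0y+c_0r=1$ lies in the direction $(a_0,b_0)=t(x_0,y_0)$; since $t<0$ this points opposite to $P$, so the empty focus lies on the ray $OP$, and computing the eccentricity from \eqref{eq:eccen} shows the focus-to-focus distance equals $r_0=|OP|$, placing the second focus exactly at $P$. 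Finally, by item \ref{it:nullline} the tangency point of each member with $C_0$ is cut out by a null line through $\v_0$, and as $\v$ traces $\Gamma$ these points sweep $C_0$, so $C_0$ is genuinely the envelope.

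The decisive and initially surprising point, which I expect to be the main obstacle, is that the planar conic $\Gamma=\cH\cap\pi$ should lie on a Minkowski null cone at all: a general planar conic lies on cones of many shapes but only exceptionally on a translate of the standard light cone. The reduction above explains why it must here, namely on $\cH$ the null-cone condition collapses to one linear equation, and the three coordinates of $\v_0$ together with the overall scale provide exactly enough freedom to force that linear equation to be $\pi$. The remaining verifications that $\|\v_0\|^2<0$, $c_0>0$, and that the second focus is $P$ are routine; they can be cross-checked elementarily, since each member has major axis $A$ and passes through $P$, its empty focus $F_\lambda$ satisfies $|PF_\lambda|=A-r_0$, so all second foci lie on the circle of radius $A-r_0$ about $P$, and the outer boundary of the union of the ellipses is the locus $|QO|+|QP|=A+(A-r_0)=2A-r_0$, the ellipse with foci $O$ and $P$, in agreement with $C_0$.
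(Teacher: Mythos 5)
Your proof is correct, but it follows a genuinely different route from the paper's. The paper derives Proposition \ref{prop:fireworks} by a chain of reductions: a vertical translation in $\Rto$ (available by Theorem \ref{thm:main2}) converts the fixed-energy family through a point into a fixed-minor-axis family through another point; that case (Proposition \ref{prop:minfireworks}) is in turn pulled back through the squaring map $\z\mapsto\z^2$ (Lemma \ref{lemma:bohm}) to the elementary Hooke statement that equal-area central ellipses through a point are tangent to a pair of parallel lines (Proposition \ref{prop:hfw}); translating back and identifying the resulting point of $\Rto$ gives the ellipse with second focus at the fixed point. You instead work entirely inside the duality dictionary: the family is the conic $\Gamma=\cH\cap\pi$ (items \ref{ti:ee} and \ref{it:para}), tangency is the null-cone relation (item \ref{it:nullcone}), and your key observation --- that on the energy hyperboloid $\|\v\|^2=-2|E|c$ the null-cone equation $\|\v\|^2-2\langle\v,\v_0\rangle+\|\v_0\|^2=0$ collapses to an affine-linear condition, which can contain the spanning conic $\Gamma$ only by being proportional to the equation of $\pi$ --- pins down the vertex $\v_0$ uniquely; I checked your coefficient matching, the sign analysis ($t<0$, hence $\|\v_0\|^2<0$ and $c_0>0$), and the computations via \eqref{eq:eccen} and \eqref{eq:EM} giving major axis $2/|E|-r_0$ and focal distance $r_0$, and they are all correct. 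What each approach buys: the paper's detour produces the two intermediate ``variations'' (the Hooke and minor-axis fireworks) as results of independent interest, while your argument is shorter and self-contained for this single proposition, explains structurally \emph{why} an envelope of this exact type must exist, and outputs the envelope's $\Rto$-coordinates and major axis directly; your closing elementary cross-check is essentially Richard's proof cited in the paper. One small point worth a sentence in a final write-up: item \ref{it:nullcone} carries the caveat of Remark \ref{rmrk:alert}, so you should note that here every orbit involved (the family members and the candidate envelope) is an ellipse, for which the null-cone characterization of genuine affine tangency holds without exception.
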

There are many proofs available. For example, Richard's proof \cite[page 839]{R},  using only  elementary Euclidean geometric,  is  hard to beat for simplicity and elegance. We shall prove it following a longer  path,  but will obtain  on the way   two variations on this result, seemingly  new. Let us begin. 

\begin{proposition}\label{prop:hfw}Consider the family of Hooke  (or central)  ellipses of fixed area passing through a fixed point in $\R^2\setminus 0.$ Then these ellipses are all tangent to a pair of parallel lines, symmetric about the line passing through the origin and  the fixed point. See Figure \ref{fig:fw}(a). \end{proposition}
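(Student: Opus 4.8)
The plan is to encode each ellipse by a symmetric matrix and reduce the whole statement to one short linear-algebra identity. A Hooke (central) ellipse is centered at the origin, hence is the zero locus $\{\r : \r^{\top}S\r = 1\}$ of a positive-definite symmetric $2\times 2$ matrix $S$, and its enclosed area equals $\pi/\sqrt{\det S}$. Thus fixing the area is the same as fixing $\det S =: d$, while passing through the given point $\p\neq 0$ is the single linear condition $\p^{\top}S\p = 1$. The family under consideration is therefore $\{\,S\succ 0 : \det S = d,\ \p^{\top}S\p = 1\,\}$, a one-parameter family of matrices.

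First I would recall the dual tangency condition: since the tangent to $\{\r^{\top}S\r=1\}$ at a point $\r_0$ has normal covector $S\r_0$, a line $\{\r:\n^{\top}\r=1\}$ is tangent to the ellipse exactly when $\n^{\top}S^{-1}\n=1$. I then look only for tangents parallel to the line $O\p$ through the origin and $\p$, i.e.\ lines with normal $\n = t\,\w$, where $\w$ is a fixed unit vector orthogonal to $\p$; such a line is $\w\cdot\r = 1/t$, at perpendicular distance $1/t$ from $O\p$. The tangency condition fixes $t$ by $t^{2}\,\w^{\top}S^{-1}\w = 1$, so the proposition is reduced to showing that the number $\w^{\top}S^{-1}\w$ is the \emph{same} for every $S$ in the family.

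The key step is the planar adjugate identity $S^{-1} = \tfrac{1}{d}\,J^{\top}SJ$, where $J=\left(\begin{smallmatrix}0&-1\\1&0\end{smallmatrix}\right)$ is the quarter-turn ($J^{2}=-I$); equivalently $d\,S^{-1} = (\tr S)\,I - S$. Writing $\w = J\hat\p$ with $\hat\p=\p/\|\p\|$, so that $J\w=-\hat\p$, I obtain $\w^{\top}S^{-1}\w = \tfrac{1}{d}(J\w)^{\top}S(J\w) = \tfrac{1}{d}\,\hat\p^{\top}S\hat\p = \tfrac{1}{d\|\p\|^{2}}\,\p^{\top}S\p = \tfrac{1}{d\|\p\|^{2}}$, using $\p^{\top}S\p=1$ at the last step. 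This is manifestly independent of $S$. Hence $t=\|\p\|\sqrt d$ for every member of the family, so all the ellipses are tangent to the single pair of lines $\w\cdot\r = \pm\,1/(\|\p\|\sqrt d)$ (both values $\pm t$ solve the tangency equation, so each ellipse meets both lines). These two lines are parallel to $O\p$, since their common normal $\w$ is orthogonal to $\p$, and they are interchanged by reflection across $O\p$, which reverses the sign of $\w\cdot\r$; this is exactly the asserted pair of parallel lines symmetric about $O\p$.

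The only real content is the identity showing that $\w^{\top}S^{-1}\w$ is constant along the family, and I expect the one subtle point to be the choice of direction: the antipodal pair of tangent lines of a centered ellipse is automatically symmetric about the center, but to make the pair symmetric about the \emph{specific} line $O\p$ one must take the normal $\w$ orthogonal to $\p$, and it is precisely in this direction that the adjugate identity turns $S^{-1}$ back into the defining constraint $\p^{\top}S\p=1$. I would also remark that for the Hooke flow fixed area is equivalent to fixed angular momentum, which is what lets this lemma feed, via the Bohlin map $\z\mapsto\z^{2}$ of Theorem \ref{thm:parab}, into the Kepler fireworks statement.
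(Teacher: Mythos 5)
Your proof is correct, but it takes a genuinely different route from the paper's. The paper argues by normalization and group transitivity: after rotating and dilating so the fixed point is $(1,0)$, it observes that every area-$\Delta$ centered ellipse through $(1,0)$ is the image of the standard ellipse $X^2+(\pi Y/\Delta)^2=1$ under a shear $(X,Y)\mapsto(X+sY,Y)$, and since shears fix the lines $Y=\pm\Delta/\pi$ and preserve tangency, the whole family inherits the tangency of the standard ellipse. You instead work dually and invariantly: you encode the family as $\{S\succ 0:\det S=d,\ \p^{\top}S\p=1\}$, reduce tangency to the dual-conic condition, and show that the dual form $\w^{\top}S^{-1}\w$ in the direction orthogonal to $\p$ is constant along the family via the planar adjugate identity $d\,S^{-1}=(\tr S)I-S$ (equivalently $S^{-1}=\frac{1}{d}J^{\top}SJ$), which converts the dual constraint back into the incidence constraint $\p^{\top}S\p=1$. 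The paper's argument is shorter and makes the homogeneity of the family under a one-parameter shear group explicit (the same mechanism behind its flatness results elsewhere); yours requires no normalization, produces the tangent lines in closed form, $\w\cdot\r=\pm 1/(\|\p\|\sqrt d\,)$, for arbitrary $\p$ and $d$, and isolates the algebraic mechanism -- conic duality -- which is very much in the spirit of the Kepler--Minkowski duality the paper exploits in Section \ref{sec:dual}. The two proofs are two faces of the same $\SL_2$-invariance: your adjugate identity is exactly what makes the shear orbit argument work.
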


\begin{proof} Without loss of generality, let the  fixed area be $\Delta$ and the fixed point $(1,0)$ (using  rotations and dilations about the origin). Any ellipse of area $\Delta$ passing through $(1,0)$ can be brought by a `shear' $S:(X,Y)\mapsto (X+sY, Y)$ to an ellipse of the form $X^2+(\pi Y/\Delta)^2=1,$ which is clearly tangent to the two lines $Y=\pm \Delta/\pi$. Since $S$ preserves these lines the original ellipse is also tangent to these lines. \end{proof}

This is our  1st variation on Proposition \ref{prop:fireworks} (a rather modest one, admittedly). Before stating  the next variation we use another lemma, possibly of some independent interest. 

\begin{lemma}\label{lemma:bohm} The squaring map $\C\to\C$, $\z\mapsto \z^2$, takes     Hooke ellipses of fixed area to  Kepler ellipses of fixed minor axis. 
\end{lemma}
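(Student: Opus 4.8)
The plan is to write the most general Hooke ellipse in complex form, apply $\z\mapsto\z^2$ explicitly, and then read off both invariants — the Hooke area and the Kepler minor axis — from the \emph{same} pair of complex parameters. A Hooke ellipse is a centered ellipse (a bounded solution of $\ddot\w=-\w$), so I would parametrize it as
\be\label{eq:hookeparam}
\w(t)=\alpha e^{it}+\beta e^{-it},\qquad \alpha,\beta\in\C.
\ee
Writing $\alpha=|\alpha|e^{i\mu},\ \beta=|\beta|e^{i\nu}$ and rotating by $e^{i(\mu+\nu)/2}$ exhibits \eqref{eq:hookeparam} as an ellipse with semi-axes $|\alpha|+|\beta|$ and $\big||\alpha|-|\beta|\big|$, whence its enclosed area is $\Delta=\pi\big||\alpha|^2-|\beta|^2\big|$. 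Since a non-degenerate centered ellipse never meets its center, $\w\neq0$ along the curve and $\z=\w^2$ is an immersion there.

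Next I would square: $\z=\w^2=\alpha^2 e^{2it}+2\alpha\beta+\beta^2 e^{-2it}$, which I recognize as an ellipse of the standard form $c+A e^{i\phi}+B e^{-i\phi}$ with center $c=2\alpha\beta$, coefficients $A=\alpha^2,\ B=\beta^2$, and angle $\phi=2t$. The key point is to locate its foci: for an ellipse $c+Ae^{i\phi}+Be^{-i\phi}$ the two foci lie at $c\pm 2\sqrt{AB}$, and here $2\sqrt{AB}=2\alpha\beta$, so the foci are $0$ and $4\alpha\beta$. Thus the origin is a focus, confirming (consistently with the Bohlin–Levi-Civita picture, under which $\z\mapsto\z^2$ sends Hooke orbits to Kepler orbits) that the image is a genuine Kepler ellipse. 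Its semi-axes are $|A|\pm|B|=|\alpha|^2\pm|\beta|^2$, so its minor axis is $B_{\mathrm{Kep}}=2\big||\alpha|^2-|\beta|^2\big|=2\Delta/\pi$. Hence the Hooke area determines, and is determined by, the Kepler minor axis, and fixing one fixes the other — which is exactly the claim.

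The only step that is not pure bookkeeping is the focus computation, i.e.\ justifying that the foci of $c+Ae^{i\phi}+Be^{-i\phi}$ sit at $c\pm2\sqrt{AB}$. I would establish this by checking against the normal form $a\cos\phi+ib\sin\phi=\tfrac{a+b}{2}e^{i\phi}+\tfrac{a-b}{2}e^{-i\phi}$, for which $2\sqrt{AB}=\sqrt{a^2-b^2}$ is precisely the focal distance, together with the remark that multiplying $(A,B)$ by a common phase merely rotates the ellipse about $c$ and sends each focus along with it. Everything else reduces to the two axis computations above.

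I would also flag two harmless points for completeness: $\z=\w^2$ is two-to-one, so traversing the Hooke ellipse once covers the Kepler ellipse twice, which is irrelevant for \emph{unparametrized} curves; and the degenerate limit $|\alpha|=|\beta|$ (a Hooke segment through the origin, of zero area) corresponds to $B_{\mathrm{Kep}}=0$, i.e.\ a Kepler segment, fitting the boundary of the statement and matching the line-to-parabola degeneration of Theorem \ref{thm:parab}.
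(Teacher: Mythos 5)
Your proof is correct, and at bottom it is the same computation as the paper's: parametrize a Hooke ellipse, square the parametrization, and read off the geometry of the image ellipse. The differences are in the bookkeeping, and they are to your credit. The paper first reduces, without loss of generality, to the axis-aligned ellipse $(x/a)^2+(y/b)^2=1$ and squares $X=a\cos\theta,\ Y=b\sin\theta$ in Cartesian coordinates, then simply asserts the image is a Kepler ellipse of minor axis $2ab=2\Delta/\pi$; the check that the origin is actually a focus of the image is left implicit. You instead keep a fully general Hooke ellipse via the complex Fourier form $\w(t)=\alpha e^{it}+\beta e^{-it}$, so no WLOG rotation is needed (the paper's reduction is harmless, since squaring intertwines rotation by $\psi$ with rotation by $2\psi$, but you avoid having to say this), and your foci formula for $c+Ae^{i\phi}+Be^{-i\phi}$ makes the focus-at-the-origin claim explicit rather than tacit: with $A=\alpha^2,\ B=\beta^2,\ c=2\alpha\beta$ the foci land at $0$ and $4\alpha\beta$ exactly, which is the one point the paper's one-line proof glosses over. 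Your closing remarks on the $2{:}1$ covering and the degenerate case $|\alpha|=|\beta|$ are correct and harmless for the unparametrized statement.
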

\begin{proof}Let a Hooke ellipse be $(x/a)^2+(y/b)^2=1$ (without loss of generality). Its area is $\Delta=\pi ab$ and it is parametrized by $X=a\cos \theta, Y=b\sin\theta.$ Its square  is parametrized by $x=X^2-Y^2=(a^2-b^2)/2+(a^2+b^2)\cos2\theta, y=2XY=ab\sin2\theta.$ This is a Kepler ellipse with minor axis $2ab=2\Delta/\pi.$
\end{proof}

Now for   the 2nd variation.

\begin{proposition}\label{prop:minfireworks}Consider the family of Kepler ellipses with  fixed  minor axis and passing through a fixed point in $\R^2\setminus 0$. Then there exists a Kepler parabola tangent to all ellipses of the family (the `envelope' of the family). See Figure \ref{fig:fw}(b). 
\end{proposition}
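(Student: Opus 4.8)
The plan is to transport Proposition \ref{prop:hfw} (the Hooke version, already proved) to the Kepler setting through the squaring map $\z\mapsto\z^2$, exactly as Lemma \ref{lemma:bohm} invites us to do. First I would fix the data: let $\p_0\in\R^2\setminus 0$ be the given point and $B$ the prescribed minor axis, and (viewing points of the plane as complex numbers) choose a square root $\w_0$ of $\p_0$, so $\w_0^2=\p_0$. Setting $\Delta=\pi B/2$, Lemma \ref{lemma:bohm} says that the squaring map sends Hooke ellipses of area $\Delta$ to Kepler ellipses of minor axis $B$; moreover a Hooke ellipse through $\w_0$ is carried to a Kepler ellipse through $\w_0^2=\p_0$. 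Hence the entire Hooke family of Proposition \ref{prop:hfw} (area $\Delta$, through $\w_0$) maps into the Kepler family we wish to study.

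Next I would check that this correspondence is \emph{onto} the Kepler family, so that no member is missed. Since $\z\mapsto\z^2$ is a $2$-to-$1$ covering of $\R^2\setminus 0$, the full preimage of a Kepler ellipse $C$ — a loop winding once about the origin, which it never meets — is a single connected loop double-covering $C$. This loop is precisely the Hooke ellipse whose square is $C$: it double covers $C$ and lies in the preimage, hence equals the whole preimage. Pulling back therefore furnishes the inverse correspondence, and the two $1$-parameter families are in bijection under squaring.

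Now for the envelope itself. By Proposition \ref{prop:hfw} every Hooke ellipse of the family is tangent to one fixed pair of parallel lines, symmetric about the line through the origin and $\w_0$. The crucial point is that these two lines have the \emph{same} image under squaring: they are interchanged by $\z\mapsto-\z$, and $(-\z)^2=\z^2$, so their images coincide (a direct computation confirms the common image is a single parabola). By Theorem \ref{thm:parab} this image is a Kepler parabola, and a short computation identifies it as the parabola with focus at the origin and axis along the line through $0$ and $\p_0$, with latus rectum determined by $B$. Because $\z\mapsto\z^2$ is a local diffeomorphism away from $0$, and the tangency points of the Hooke ellipses with the two lines stay away from $0$ (they occur where the second coordinate equals $\pm B/2\neq 0$), tangency is preserved: each Kepler ellipse of the family is tangent to this one Kepler parabola, which is thus the desired envelope.

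The main obstacle, conceptually, is the collapse of the two parallel lines to a single parabola — this is exactly where the $2$-to-$1$ nature of the squaring map does the essential work, and it explains why the Kepler envelope is a single parabola rather than a pair of curves. A secondary point needing care is the surjectivity of the family correspondence (ensuring every Kepler ellipse of minor axis $B$ through $\p_0$ is captured), which the covering-space argument above supplies; one must also verify that the tangency points avoid the origin, so that tangency genuinely transports across the squaring map.
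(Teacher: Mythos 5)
Your proposal is correct and takes essentially the same route as the paper: transport Proposition \ref{prop:hfw} through the squaring map via Lemma \ref{lemma:bohm}, with the pair of parallel lines collapsing to a single Kepler parabola. The paper's version is terser --- it asserts the Hooke--Kepler family correspondence without your covering-space justification and instead records the explicit envelope $y^2=4p(x+p)$, $p=B^2/(4x_1)$, which it needs later in the proof of Proposition \ref{prop:fireworks}; the one caveat on your side is that the phrase ``the Hooke ellipse whose square is $C$'' quietly presupposes the converse of Lemma \ref{lemma:bohm} (every Kepler ellipse admits a Hooke square root), which is true but deserves the short computation solving for the semi-axes.
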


\begin{proof} By  Lemma \ref{lemma:bohm}, the family of Kepler ellipses with  fixed  minor axis, passing through a fixed point,  is the image under the
 squaring map of the  family of Hooke ellipses of fixed area passing through a fixed point. By Proposition  \ref{prop:hfw}, the envelope of these Hooke ellipses  is a pair of parallel lines, equidistant  from  the   origin. Under the squaring map, the image of these lines is the envelope of the family of Kepler ellipses. Following  this recipe  for the  envelope of the Kepler ellipses with  minor axis $B$ going through  $(x_1,0)$  we get the Kepler parabola $y^2=4p(x+p)$, where $p=B^2/(4x_1).$\end{proof}

\begin{remark} The last proposition can be also  established by passing to the dual statement using Table \ref{tab:pd}, by considering  the parabolic plane in $\Rto$ corresponding to  the fixed point, then taking its polar with respect to the quadric  corresponding to ellipses with a fixed minor axis (hyperboloid of 2 sheets). We leave the details of  this alternate proof   for the reader to explore.
\end{remark}
Now we use duality (Table \ref{tab:pd})  and translation  symmetries  in $\Rto$  (Theorem \ref{thm:main2}) to derive  Proposition \ref{prop:fireworks} from its minor axis variant (Proposition \ref{prop:minfireworks}). 

\mn{\em Proof of Proposition \ref{prop:fireworks}.}   Kepler ellipses with energy $E<0$  passing through 
$(x_0,0)$ correspond to the intersection of  $a^2+b^2-(c+E)^2=-E^2$ with  $ x_0(a+c)=1.$ This is mapped by
$(a,b,c)\mapsto (a,b,c+E)$ to the intersection of $a^2+b^2-c^2=-E^2$ with $x_0(a+c-E)=1.$ The latter are Kepler ellipses with minor axis $B=-2/E$ passing through $(x_1, 0)$, where $x_1=x_0/(1+Ex_0)$, with  envelope   $y^2=4p(x+p)$, where $p=B^2/(4x_1)=(1+Ex_0)/(x_0E^2),$ corresponding to $(-1/(2p), 0, 1/(2p))\in\Rto$. Translating back, the envelope of the original family is given by $(-1/(2p), 0, 1/(2p)-E)\in\Rto$. Using  the value of $p$ and a bit of algebra, this is seen to correspond to  a Kepler  ellipse with 2nd focus $(x_0, 0)$, as needed. \qed

\begin{figure}[!htb]    
\centering\includegraphics[width=\textwidth]{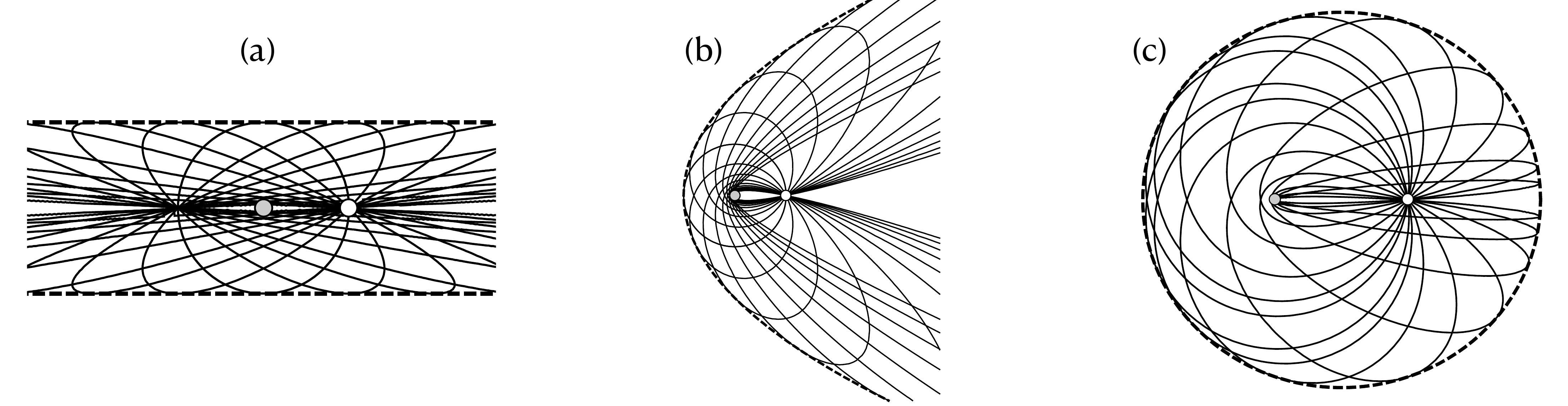}
 \caption {Envelopes of concurrent conics. (a) Hooke's orbits with fixed  area. (b) Kepler's orbits with fixed minor axis. (c) Kepler's orbits with fixed major axis. }
 \label{fig:fw}
\end{figure}

\begin{remark} The  positive energy  analog of  Proposition \ref{prop:fireworks}, i.e. for hyperbolic orbits, is somewhat disappointing, as the family admits no envelope. There is however a `scattering' version of this proposition, for the repelling inverse square law, see Figure \ref{fig:fw_extras}(i). 
A familiar `everyday' version, for constant force, where all orbits as well as the envelope are parabolas, can be observed in fireworks displays and water fountains. See Figure \ref{fig:fw_extras}(b) and (c).   
\end{remark}

\begin{figure}[!htb]    
\centering\includegraphics[width=\textwidth]{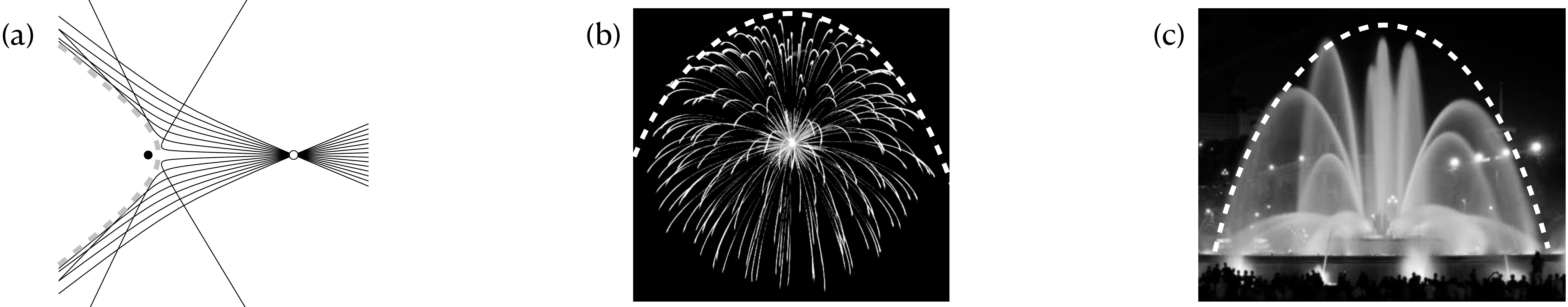}
 \caption{\small (a) Coulomb scattering. (b) Fireworks envelope. (c) Water fountain envelope.}
 \label{fig:fw_extras}
\end{figure}

\section{Proofs of Theorems \ref{thm:main1}-\ref{thm:hill2}}\label{sec:proofs}

\paragraph{Proof of Theorem  \ref{thm:main1}.} 
Let $\RPt$ be the 3-dimensional projective space with homogeneous coordinates $(X:Y:Z:W)$. We identify $\R^3$ with the affine chart $W\neq 0$, $(x,y,z)\mapsto (x:y:z:1).$ 
The closure of $\cC=\{x^2+y^2=z^2\}$ in $\RPt$ is   $\overline\cC=\{   X^2+Y^2=Z^2\}$, obtained by adding to $\cC$ the `circle at infinity' $S^1_\infty=\{ X^2+Y^2=Z^2, \ W=0\}=\ccC\setminus \cC$. See Figure \ref{fig:cylinders}. 

 Let $\tOS\subset \GL_4(\R)$ be the subgroup preserving the (degenerate) quadratic form $X^2+Y^2-Z^2$, 
 up to scale. Its  image $\OS:=\tOS/\R^*$ in the projective group  $\PGL_4(\R)=\GL_4(\R)/\R^*$   is the group  of projective transformations of $\RPt$ preserving $\ccC$. 
\begin{lemma}\label{lemma:g}
 $ \tOS$ consists of elements of the form 
\be\label{eq:g}
\left(
\begin{array}{cc}
A & 0 \\
\b^t & \lambda \\
\end{array}
\right), \quad A\in \COto, \ \b\in\R^3,\ \lambda\in \R\setminus 0.
\ee
\end{lemma}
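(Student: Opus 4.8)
The plan is to impose the defining condition of $\tOS$ directly in block form and read off the constraints block by block. Write the Gram matrix of the form $X^2+Y^2-Z^2$ on $\R^4$ as $G=\left(\begin{smallmatrix} J & 0\\ 0 & 0\end{smallmatrix}\right)$ with $J=\diag(1,1,-1)$, so that $g\in\tOS$ means $g^tGg=\mu G$ for some scalar $\mu$. Decompose $g=\left(\begin{smallmatrix} A & \mathbf{u}\\ \b^t & \lambda\end{smallmatrix}\right)$ into a $3\times 3$ block $A$, a column $\mathbf{u}\in\R^3$, a row $\b^t$ and a scalar $\lambda$. A direct multiplication gives
\[
g^tGg=\begin{pmatrix} A^tJA & A^tJ\mathbf{u}\\ \mathbf{u}^tJA & \mathbf{u}^tJ\mathbf{u}\end{pmatrix},
\]
so the condition $g^tGg=\mu G$ is equivalent to the four block equations $A^tJA=\mu J$, $A^tJ\mathbf{u}=0$, $\mathbf{u}^tJA=0$ and $\mathbf{u}^tJ\mathbf{u}=0$ (the third being the transpose of the second, since $J$ is symmetric).

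The crux is to show $\mu\neq0$, which will force $\mathbf{u}=0$. Since $g$ is invertible, congruence preserves rank, so $g^tGg$ has the same rank as $G$, namely $3$; because $\mu G$ has rank $3$ only when $\mu\neq0$, we conclude $\mu\neq0$. This is the one place where the degeneracy of $G$ must be handled with care: for a nondegenerate form the nonvanishing of the scale is automatic, but here it has to be deduced from ranks. Taking determinants in $A^tJA=\mu J$ and using $\det J=-1$ yields $(\det A)^2=\mu^3\neq0$, so $A$ is invertible and, satisfying $A^tJA=\mu J$ with $\mu\neq0$, lies in $\COto$ by definition. Then in $A^tJ\mathbf{u}=0$ both $A^t$ and $J$ are invertible, forcing $\mathbf{u}=0$, after which the remaining two block equations hold automatically. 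With $\mathbf{u}=0$ the matrix $g$ is block lower triangular with $\det g=\lambda\det A$, and invertibility of $g$ then forces $\lambda\neq0$. This produces exactly the form \eqref{eq:g}.

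Finally I would verify the converse inclusion: any $g$ of the form \eqref{eq:g}, with $A\in\COto$ of scale $\mu$, $\b\in\R^3$ and $\lambda\neq0$, satisfies $g^tGg=\mu G$ by the same block computation (now with $\mathbf{u}=0$), hence belongs to $\tOS$. As a consistency check this also recovers the dimension count used in the sketch, $\dim\tOS=\dim\COto+3+1=4+3+1=8$. I expect no real obstacle in the argument: it is essentially a single block calculation, and the only point genuinely demanding attention is the rank argument establishing $\mu\neq0$, which is precisely what makes the off-diagonal column $\mathbf{u}$ vanish and forces the triangular shape.
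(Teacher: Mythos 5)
Your proof is correct and takes essentially the same approach as the paper: the paper's proof simply imposes $g^tJg=cJ$ with $J=\diag(1,1,-1,0)$ and dismisses the rest as ``a simple calculation,'' which is exactly the block computation you carry out. The rank argument you use to establish $\mu\neq 0$ (and hence $\mathbf{u}=0$) is precisely the detail hiding behind the paper's one-line proof, and you handle it correctly.
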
 
\begin{proof}  $g\in\tOS$ if and only if  $g^tJg=cJ$, where  $J=\diag(1,1,-1,0)$ and  $c\in \R$. By a simple calculation $g$ has the claimed form. 
 \end{proof}
 It follows  that $\tOS$ is an  8-dimensional group and  $\OS=\tOS/\R^*$ is 7-dimensional. 
In  the affine chart $\R^3\subset\RP^3$ (column vectors), $\q\mapsto (\q:1)$,  the action  of an element of $\tOS$ given by Equation \eqref{eq:g}  is 
\be\label{eq:action}
\q\mapsto \frac{A\q}{\lambda + \b^t \q}, \quad  \q\in\R^3.
\ee
It   restricts to a local action on $\cC_+$
 and  projects to a local action  on $\R^2\setminus 0$. By the general theory of point symmetries of ODEs (see the Appendix), the maximal  dimension of the symmetry group of a 3-parameter family of plane curves  is 7, hence this local $\OS$-action on $\R^2\setminus 0$ provides  the full group of orbital symmetries.

The expressions for the infinitesimal symmetries  in Equation \eqref{eq:vf} follow from the above by differentiating the action along 1-parameter subgroups of $\tOS$. Let $X\in \mathrm{Lie}(\tOS)$ (the Lie algebra of $\tOS$). Since we are considering projectivized action, we can assume without loss of generality  that $\tr(X)=0$. From  Equation \eqref{eq:g} follows that such an $X$ has the form 

\be\label{eq:X}
X=
\left(
\begin{array}{cccc}
 \frac{x_1}{4} & -x_2 & x_3 & 0 \\
 x_2 & \frac{x_1}{4} & x_4& 0 \\
 x_3 & x_4 & \frac{x_1}{4} & 0 \\
x_5 &x_6 &x_7 & -\frac{3 x_1}{4} \\
\end{array}
\right), \ x_1, \ldots, x_7\in\R.
\ee
 The induced vector field on $\R^2\setminus 0$ is 
$(x,y)\mapsto \gamma'(0),$ where $\gamma(t)=\pi(e^{tX}q)$, $q=(x,y,\sqrt{x^2+y^2},1)^t$ and $\pi(X,Y,Z,W)=\left(X/W, Y/W\right).$ The  formulas of Equation \eqref{eq:vf}  follow from  this recipe by  setting  $x_i=1$ and the rest 0 in Equation   \eqref{eq:X}, $i=1, \ldots, 7.$  \qed

\paragraph{Proof of Theorem \ref{thm:main2}.}
 Note first that an element $g\in\tOS$, given by Equation \eqref{eq:g}, acts on $(\R^4)^*$ (row vectors) by $p\mapsto pg^{-1}$. In the affine chart  $\Rto\subset\mathrm{P}((\R^4)^*)$ (row vectors), $\p\mapsto (\p:-1)$, the action on $\Rto$ by an element of $\tOS$,  given by Equation \eqref{eq:g}, is 
\be\label{eq:dualaction}
\p\mapsto (\lambda\p +\b^t) A^{-1}, \  \p\in\Rto. 
\ee
It follows that for  $X$ given by Equation \eqref{eq:X} the  induced  vector field on $\Rto$  is $\p\mapsto \gamma'(0),$ where $\gamma(t)=\pi(p e^{-tX})$,  $p=(\p,-1)$ and $\pi(A,B,C,D)=-\left(A/D, B/D, C/D\right).$ \qed

\paragraph{Proof of Theorem \ref{thm:parab}.} Identify $\R^2=\C$ and consider the squaring map $B: \z\mapsto \z^2.$ 

\begin{lemma} $B$ defines  a $2:1$ cover $\C\setminus 0\to\C\setminus 0$, mapping pairs of parallel symmetric affine lines   into  Kepler parabolas. 
\end{lemma}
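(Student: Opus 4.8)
The plan is to verify directly that the squaring map $B:\z\mapsto\z^2$ on $\C\setminus 0$ carries the appropriate affine lines to Kepler parabolas, and to identify which lines are involved. First I would confirm the covering statement: writing $\z=\rho e^{i\phi}$, we have $\z^2=\rho^2 e^{2i\phi}$, so $B$ is surjective onto $\C\setminus 0$ and exactly $2:1$ since $\z$ and $-\z$ share an image; its derivative $2\z$ is nonvanishing on $\C\setminus 0$, so $B$ is a local diffeomorphism (a genuine covering map). The phrase ``pairs of parallel symmetric affine lines'' should mean a line $\ell$ together with its reflection $-\ell$ through the origin; since $B(\z)=B(-\z)$, the two lines map to the \emph{same} image curve, which explains why one gets a single parabola from the pair.

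The heart of the matter is the image computation. I would parametrize a generic affine line not through the origin and show its square is a Kepler parabola, i.e. a parabola with focus at $0$. Write the line as $\{z_0+t\,w : t\in\R\}$ for fixed $z_0,w\in\C$ with $w\neq 0$. Then
\begin{equation}
B(z_0+tw)=z_0^2+2z_0 w\,t + w^2 t^2,
\end{equation}
which traces a parabola in $\C=\R^2$ (a quadratic curve in the real parameter $t$, with leading real-vector coefficient $w^2$). Two things must be checked: that this parabola has its focus at the origin, and that it is the image of exactly the pair $\pm\ell$. The focus-at-origin property is the real content, and I would verify it via the focus-directrix characterization recalled in the paper: a Kepler parabola is a conic of eccentricity $1$ with focus at $0$. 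The cleanest route is to use the already-established fact that Kepler parabolas are precisely the curves $ax+by+cr=1$ with $a^2+b^2=c^2$ (Corollary~\ref{cor:dc}, the null-cone case of Theorem~\ref{thm:EM}). So I would compute $r=|B(z_0+tw)|=|z_0+tw|^2$ and express the image in the form $ax+by+cr=1$, checking that the resulting coefficients satisfy the parabolic condition $a^2+b^2=c^2$.

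The main obstacle I expect is the bookkeeping to get the image into the normalized Keplerian form and to pin down the focus, since $r=|z_0+tw|^2$ is quadratic in $t$ while $x,y$ are also quadratic in $t$, so one must eliminate $t$ and recognize the linear relation among $x,y,r$. A convenient simplification is to use the rotation/dilation symmetries (the first two vector fields of Theorem~\ref{thm:main1}, which act as $\z\mapsto \lambda\z$ downstairs and correspond to $\z\mapsto\sqrt\lambda\,\z$ upstairs) to reduce to a normalized line, e.g. a vertical line $\mathrm{Re}(\z)=\text{const}$; its image is a parabola symmetric about the real axis, for which the focus-at-origin and the condition $a^2+b^2=c^2$ are immediate. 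One then transports the conclusion back by symmetry. Finally I would note that the ``parallel symmetric'' pair $\ell,-\ell$ is exactly the full $B$-preimage of the parabola, completing the $2:1$ description. The genuinely geometric input is entirely supplied by Theorem~\ref{thm:EM} and Corollary~\ref{cor:dc}; everything else is a direct parametric computation.
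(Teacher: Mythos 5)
Your proposal is correct and matches the paper's proof in essence: the paper likewise uses the $\C^*$-equivariance $B(\lambda\z)=\lambda^2B(\z)$ to reduce to the pair of vertical lines $x=\pm 1$ and then verifies by direct computation that their image is a Kepler parabola. Your extra material (the covering-map verification, the generic-line parametrization, and the identification of $\ell\cup(-\ell)$ as the full preimage) is sound but ultimately scaffolding around the same two-step argument.
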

\begin{proof} Since   $B$ is $\C^*$-equivariant, $B(\lambda Z)=\lambda^2B(Z),$ $\lambda\in\C^*$, it is enough to consider the pair  $x=\pm 1$. Their  $B$-image is the Kepler parabola $x=(1+y/2)^2.$ 
\end{proof}
It follows that the set of Kepler parabolas is a flat 2-parameter family of plane curves. \qed

\paragraph{Proof of Theorem \ref{thm:angmom}.} We offer two proofs.

\mn {\em First proof.} Kepler orbits with angular momentum $M$ are the projections of sections of $\cC$ by planes passing through $P:=(0,0,M^2)$ (Corollary \ref{cor:M}). Central projection from $P$ then maps these conic sections  to straight lines in the $xy$ plane. 

\mn {\em Second proof.} Kepler orbits with fixed $M$ are parametrized by the horizontal plane $\{c=1/M^2\}\subset \Rot$, see Corollary \ref{cor:M} above. We know that $\OS$ acts on $\Rto$ as its full group of Minkowski similarities, so there is an element  $g\in\tOS$ that translates this plane to the  plane $c=0$, parametrizing straight lines in the $xy$ plane. By  Equation \eqref{eq:dualaction}, we can take $g$ corresponding  to $A = id, \b = (0,0,-1/M^2)$.
The stated formula  follows  from Equation \eqref{eq:action}.\qed

\begin{remark} Yet another proof, less elementary,  is to write a second order linear ODE for the family of Kepler orbits with fixed $M$ and use the fact that  second order linear ODEs are  flat \cite[page 44]{Ar2}. The said ODE is $\rho''(\theta)+\rho(\theta)=1/M^2,$ where $\rho=1/r$. See the proof of Proposition \ref{prop:flatE} below. 

\end{remark}

\paragraph{Proof of Theorem \ref{thm:hill1}.} 
According to the general theory of  symmetries of ODEs, flatness of a 2-parameter family of plane curves  is  equivalent to the  vanishing of certain two differential  invariants  of  an associated second order ODE. In the Appendix we carry out a calculation showing that one of these invariants is non-vanishing for the family of Kepler orbits of fixed non-zero energy, thus proving  that each such family  is non-flat, see Proposition \ref{prop:flatE}.  Next, according to another  basic result of the  theory, the  dimension of the symmetry group of a non-flat 2-parameter family is at most 3. Thus, for each $E\neq 0$, it is enough to  find  a 3-dimensional subgroup of $\OS$ preserving the set of Kepler orbits with energy $E$. 

As explained in Corollary \ref{cor:E}, Kepler orbits with energy $\pm E\neq 0$ are projections of sections of $\cC$ by planes tangent to the inscribed paraboloid of revolution $\cP= 
\{2z=|E|\left(x^2+y^2\right)+1/|E|\}$. Let $\ccP$ be the closure of $\cP$ in $\RP^3$.
It is a smooth convex compact surface, 
 given in homogeneous coordinates by  the vanishing of the quadratic form $|E|\left(X^2+Y^2\right)-2ZW+W^2/|E|,$  obtained by adding to $\cP$ the  point $(0:0:1:0)$, the tangency point of $\ccP$ with the plane $W=0$ (the white dot in  Figure \ref{fig:huevo}(a)). Consider the subgroup  $\tOS_E\subset \tOS$  preserving this quadratic form up to scale. A short calculation shows that its Lie algebra consists of matrices of the form 
\be \label{eq:XX}
X=\left(
\begin{array}{cccc}
 0 & -x_2 & x_3 & 0 \\
 x_2 & 0 & x_4 & 0 \\
 x_3 & x_4 & 0 & 0 \\
 |E| x_3 & |E| x_4 & 0 & 0 \\
\end{array}
\right), \quad x_2, x_3, x_4\in \R.
\ee
The associated vector field in the $xy$-plane is  $(x,y)\mapsto \gamma'(0)$, where $\gamma(t)=\pi(e^{tX}q)$, $q=(x,y,\pm\sqrt{x^2+y^2},1)
^t$ and $\pi(X,Y,Z,W)=\left(X/W, Y/W\right).$ The sign in $q$ is the opposite sign of $E$, since for $E>0$ (the hyperbolic case) we need to project the action from $\cC_-$ and for $E<0$  from $\cC_+$. Setting  $x_i=1$ and the rest 0 in Equation   \eqref{eq:XX}, $i=2,3,4, $  we obtain from this recipe for $E<0$ the vector fields 
$$v_2:=\partial_\theta,\ v_3:=r(\partial_x+Ex\partial_r),\ v_4:=r(\partial_y+Ey\partial_r),$$
as in  Equation \eqref{eq:fe}. 
For $E>0$ we get the vector fields $v_2,-v_3,-v_4.$ In both cases, $v_2, v_3, v_4$ are infinitesimal generators of the $\OS_E$-action, as stated.

The isomorphism $\tOS_E/\R^*\simeq \PSLt$ is best seen in the dual picture, in $\Rto$. See Figure \ref{fig:huevo}(b). 

\begin{figure}[!htb]
        \centering
        \includegraphics[width=.9\textwidth]{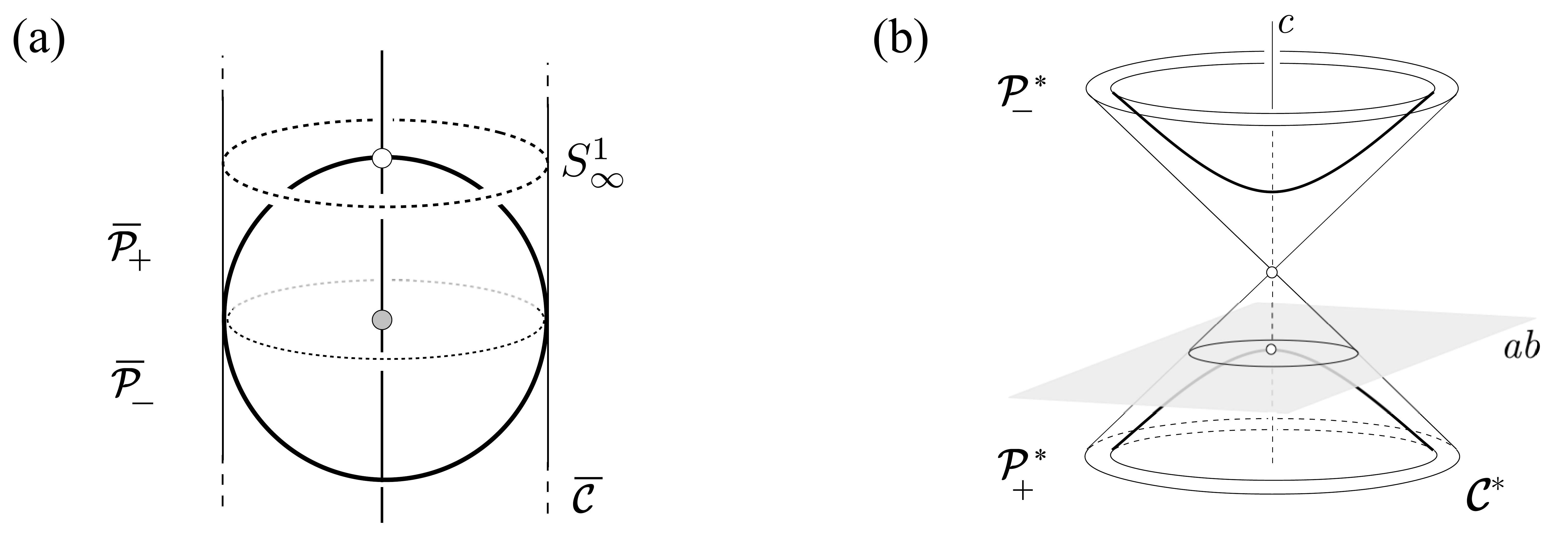} 

        \caption{\small The proof of Theorem \ref{thm:hill1}. (a) In the affine chart $Z\neq 0$, with coordinates $x=X/Z, y=Y/Z, w=W/Z$ the surface  $\ccP$ is the ellipsoid of revolution $x^2+y^2+(w-E)^2/E^2=1,$ inscribed in  the vertical cylinder $\ccC=\{x^2+y^2=1\}$,  tangent to the plane $w=0$ (the `plane at infinity' in the chart $W\neq 0$). Compare to  Figure    \ref{fig:parabo}, where $\cP$ is drawn in the chart $W\neq 0$. (b) The dual picture where $\cP^*$ parametrizes planes tangent to $\cP$. It is a hyperboloid of revolution of two sheets. The Minkowski metric restricts to a hyperbolic metric on it, and $\OS_E$ acts as its group of isometries. 
        }
       \label{fig:huevo}
  \end{figure}

Kepler orbits of energy $E\neq 0$  are parametrized by the surface $\cP^*=\{-a^2-b^2+(c-|E|)^2=E^2\}\subset \Rto$, the quadric surface dual to $\ccP$ (see Equation \eqref{eq:EM} and Figure 
\ref{fig:huevo}(b)). This is a hyperboloid of revolution of two sheets. The lower sheet $\cP^*_+$ parametrizes planes tangent to $\cP_+$, which correspond to Kepler hyperbolas  with energy $|E|$. Similarly for the lower sheet. The Lorentzian metric $da^2+db^2-dc^2$ in $\Rot$ restricts to an hyperbolic metric on each of the sheets, on each of which the identity component of $\OS_E$ acts as the identity component of its isometry group (in the full $\OS_E$ there is also an element interchanging the two sheets, we will use it in the proof of the next theorem). 

It is also clear from Figure \ref{fig:huevo}(a) why the orbital symmetry action on $\cH_E$ for $E>0$ is only local. This is because $\ccP_+$ touches the plane $W=0$ (the `plane at infinity' of the affine chart $W=0$, intersecting $\ccC$ at $S^1_\infty$) at one point, which does not correspond to any point in Kepler's $xy$ plane. \qed

\paragraph{Proof of Theorem \ref{thm:hill2}.} Consider in  Figure \ref{fig:huevo}(b) the reflection about the horizontal plane $c=|E|$ passing through the vertex  of the shown cone, $(a,b,c)\mapsto (a,b, 2|E|-c),$
interchanging  the lower  and upper sheets $\cP^*_\pm$ of $\cP^*$. The corresponding element in $\tOS$ is 
$$g=\left(
\begin{array}{cccc}
 1 & 0& 0 & 0 \\
 0& 1& 0 & 0 \\
 0 & 0 & -1 & 0 \\
 0 & 0 & -2|E| & 1 \\
\end{array}
\right).
$$
In Figure \ref{fig:huevo}(a), in the affine chart $Z\neq 0$ with coordinates $x=X/Z, y=Y/Z, w=W/Z$, $g$ acts by  $(x,y,w)\mapsto (x,y,2|E|-w)$, a reflection about  the center $(0,0,|E|)$ of $\ccP$ (the dark dot), interchanging $\ccP_\pm$. 
In Figure \ref{fig:parabo}, in the affine chart $W\neq 0$, with coordinates $x=X/W, y=Y/W, z=Z/W$, $g$ acts by $(x,y,z)\mapsto (x,y,-z)/(1-2|E|z),$ interchanging $\cP_\pm.$

To write an explicit orbital embedding $\cH_E\to \cH_{-E}$, note first in Figure \ref{fig:parabo} that Kepler hyperbolas are the projections of sections of the {\em lower} part  $\cC_-$ with planes tangent to $\cP_+$, and that Kepler ellipses are the projections of sections of the {\em upper} part $\cC_+$ with planes tangent to $\cP_-$. The embedding is thus given by the composition    $\r=(x,y)\mapsto (\r,-r)\mapsto (\r,r)/(1+2Er)\mapsto \r/(1+2Er),$ as needed. 

We can also map the `repelling branches' of Kepler hyperbolas with energy $E$ into $\cH_{-E}$, but these are the projections  of sections of the {\em upper} part of $\cC$ with planes tangent to $\cP_+$, thus the embedding is $\r=(x,y)\mapsto (\r,r)\mapsto (\r,-r)/(1-2Er)\mapsto \r/(1-2Er).$ See Figure \ref{fig:emb}. \qed

\appendix

\section{Appendix: Symmetries  of  ODEs}
\label{app:ode}

The purpose of this appendix is twofold: first, we  fulfill a promise made in the beginning of the proof of Theorem  \ref{thm:hill1}, showing  that the 2-parameter family of Kepler orbits with fixed non-zero energy is not flat. See Theorem \ref{thm:flat} below. Second, we fit   the results of this article  into the general context of the theory of symmetries of ODEs.

\paragraph{Lie's theory of symmetries of ODEs.}  An $n$-parameter family of plane curves is given, locally,  under some mild regularity  conditions, by the graphs of solutions $y(x)$ of an $n$-th order ODE $y^{(n)}=f(x,y,y',\ldots, y^{(n-1)}).$ Local diffeomorphisms of the $xy$ plane preserving the graphs of solutions of the ODE are classically called {\em point symmetries} of the ODE. Vector fields in the plane whose flow acts by point  symmetries  are {\em infinitesimal point symmetries}. The subject    was developed in the 19th century,  mostly by Sophus Lie and his students, later on  in the 20th century  by \'E.~Cartan and many others,  and  is a still an active area of research. A standard modern reference is P.~Olver's book, see also \cite{BK, DL, PS, S}. 

\paragraph{On `local symmetries'.}
Point symmetries are   local not only in the $xy$  plane but also in the jet spaces over $\R^2$ to which they are naturally prolonged. 
An $n$-th order ODE $y^{(n)}=f(x,y,y',\ldots, y^{(n-1)})$ defines a hypersurface  $M:=\{p_n=f(x,y,p_1, \ldots, p_{n-1})\}$  in the total space $J^n$ of the bundle of $n$-th order jets of curves in $\R^2$. $M$ is an $(n+1)$-dimensional manifold, doubly foliated,  with leaves of dimensions $n-1$, $1$, the sum of  whose tangents  span a contact distribution on $M$. The  first  foliation is  by the fibers of the projection $(x,y,p_1,\ldots, p_n)\mapsto (x,y)$ and the second by the $n$-th jets  of the solutions to the ODE. A point symmetry of the  ODE is a local diffeomorphism  of $M$ preserving both foliations. It projects to a local diffeomorphism of the $xy$ plane. A  good introduction to this geometric point of view on ODEs, for $n=2$, is Arnold's book \cite[Section 1.6]{Ar2}.



 \paragraph{Flat families.}
  An $n$-parameter family of plane curves is {\em flat} if it is locally diffeomorphic to  the family given by $y^{(n)}=0$ (graphs of polynomial functions  of degree $<n$). As was shown by S. Lie, a family is flat if and only if its local symmetry group  is  $(n+4)$-dimensional for $n>2$ and 8-dimensional for $n=2$, 
the maximal dimension possible for an $n$-parameter family of plane curves (Theorems 6.39 and 6.42 of \cite{O}). 

The $n=3$ case, i.e. point symmetries of 3rd order ODEs,  was further studied in more depth in  1905 by K. W\"unschmann \cite{W}, around 1940 by S.-s Chern \cite{Ch1, Ch2} and  \'E. Cartan  \cite{Cart},     and later on by others \cite{Go, GN1, GN2, SY, To}.  The only result  from this theory  that we use, in the proof of Theorem \ref{thm:main1},  due to Lie, is  that  {\em the maximum dimension of the symmetry group of a 3-parameter family of plane curves is 7.} 

 Theorem \ref{thm:main1} can thus be interpreted as saying that the 3-parameter family of Kepler orbits is locally diffeomorphic  to the solutions of $y'''=0,$  i.e. vertical parabolas of the form $y=ax^2+bx+c.$ Let us find  such a diffeomorphism. 
 Define a map from the $XY$ plane to the $xy$-plane by 
\be \label{eq:vp}(X,Y)\mapsto (x,y)=\left({X^2- 1\over Y}, {2X\over Y}\right).
\ee

\begin{proposition}
Equation \eqref{eq:vp} defines   a local diffeomorphism from the  $XY$-plane into the $xy$-plane, mapping   each  vertical parabola  $Y=AX^2+BX+C,$ $A,B,C\in\R$, onto the Kepler orbit  
$ax+by+cr=1$, where $a=(A-C)/2, b=B/2, c=(A+C)/2.$ 
\end{proposition}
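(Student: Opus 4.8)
The plan is to establish both assertions by direct computation, organized around the single algebraic identity that makes the map work. Writing the map as $x=(X^2-1)/Y,\ y=2X/Y$, I would first record that
\[
x^2+y^2=\frac{(X^2-1)^2+4X^2}{Y^2}=\frac{(X^2+1)^2}{Y^2},
\]
using $(X^2-1)^2+(2X)^2=(X^2+1)^2$. Equivalently $x+iy=(X+i)^2/Y$, which is exactly what ties this map to the squaring (Bohlin) map of Theorem \ref{thm:parab}. Restricting to the region $Y>0$ --- forced on us since $r$ is a positive square root --- this yields the clean formula $r=(X^2+1)/Y$, so that $x$, $y$ and $r$ are simultaneously displayed as quadratics in $X$ over the common denominator $Y$.

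With this in hand the orbit claim becomes a one-line substitution. I would insert $a=(A-C)/2$, $b=B/2$, $c=(A+C)/2$ into $ax+by+cr$ and clear the factor $1/(2Y)$, so that
\[
2Y\,(ax+by+cr)=(A-C)(X^2-1)+2BX+(A+C)(X^2+1).
\]
The coefficients $a,b,c$ are rigged precisely so that this telescopes: the $X^2$-terms combine to $2A$, the linear term is $2B$, and the constants give $2C$, so the right-hand side equals $2(AX^2+BX+C)=2Y$ along the parabola $Y=AX^2+BX+C$. Hence $ax+by+cr=1$, showing each parabola lands on the asserted Kepler orbit; surjectivity onto the orbit then follows from the local-diffeomorphism statement together with a connectedness/parametrization check.

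For the diffeomorphism claim I would simply compute the Jacobian of $(X,Y)\mapsto(x,y)$:
\[
\frac{\partial(x,y)}{\partial(X,Y)}
=\frac{2X}{Y}\cdot\frac{-2X}{Y^2}-\frac{-(X^2-1)}{Y^2}\cdot\frac{2}{Y}
=\frac{-2(X^2+1)}{Y^3},
\]
which never vanishes on $\{Y\neq 0\}$; the inverse function theorem then gives a local diffeomorphism there.

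The one genuine subtlety --- and the step I would state most carefully --- is the passage from $x^2+y^2$ to $r$. The formula $r=(X^2+1)/Y$ holds only for $Y>0$; on $Y<0$ the same substitution produces a different quadratic (the ``wrong'' orbit), reflecting the two-sheeted cone and the sign ambiguity already present in the Lagrange construction of Theorem \ref{thm:EM}. Since only a local statement is claimed, restricting to $Y>0$ costs nothing, but it is the point where the argument must not be glossed over.
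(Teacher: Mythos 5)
Your computation is exactly the ``straightforward verification'' that the paper leaves to the reader: the identity $r=(X^2+1)/Y$ on $\{Y>0\}$, the telescoping substitution giving $2Y(ax+by+cr)=2(AX^2+BX+C)$, and the Jacobian $-2(X^2+1)/Y^3$ are all correct, and your flagging of the $Y>0$ restriction is a genuine point the paper glosses over (for $Y<0$ the parabola lands on the reflected curve $ax+by-cr=1$ instead). The one soft spot is your appeal to connectedness for surjectivity---strictly, the image may miss a point of the orbit (e.g.\ the parabola $Y=X^2+1$ maps onto the unit circle minus the point $(1,0)$, the limit as $X\to\pm\infty$)---but this is a defect of the proposition's ``onto'' as literally stated, repaired only in the projective completion, rather than a flaw peculiar to your argument.
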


The proof is by a straightforward verification.

\paragraph{Path geometries, Tresse classification.}
 The $n=2$ case  is the best known  and is called a {\em path geometry.}  If a 2-parameter family is not flat then the maximal possible dimension of the symmetry group drops from 8 to 3. A list of normal forms of 2nd order ODEs admitting a 3-dimensional group of symmetries, over the complex numbers,  was derived by A. Tresse (a French student of S. Lie) in his 1896 PhD dissertation \cite{Tr}.  The list is divided into 4 `types', according to the symmetry group (all types come with 1 or 2 continuous parameters). Type d), the type that concerns us, deals with  $\SL_2(\C)$ invariant 2nd order ODEs, and is  given by Tresse as   $y''=(a(y')^3-y')/(6x),$  where $a$ is a (complex) parameter. 
 
 Tresse classification  was  extended to the real case \cite{DK, La} 
 but by and large we think that this list  has not been sufficiently explored. 
 
  Over the reals, Tresse's type d) breaks first into two subtypes, according to the two real forms of $\SL_2(\C)$: $\SU_2$ and $\SL_2(\R)$. We are concerned with $\SL_2(\R)$. 
  
  Among the $\SLt$-invariant path geometries, there are two `exceptional'  cases (without parameters), corresponding to the two ODEs $y''= \pm (xy' - y)^3$.
  What distinguishes these two cases from all other items on Tresse list is that these are the only cases of {\em projective} path geometries, i.e. the paths  are the (unparametrized) geodesics of a torsionless affine connection. In fact, in this case the paths are the geodesics of the  well known Jacobi-Maupertuis  metric defined on the Hill region for any mechanical system with fixed energy.

  The case that appears here (constant energy Kepler orbits) corresponds to  $y''=(xy'-y)^3$, but it is not so easy to see the equivalence (we  will not pursue it here).

A  path geometry on a surface $S$  determines a `dual' path geometry on the path space $S^*$, parametrized by the points of $S$: to each point of $S$ is assigned a path in $S^*$, the set of paths in $S$ passing through this point. The dual path  geometry of a flat path geometry (straight lines, graphs of solutions to $y''=0$) is also flat, but a generic non-flat path geometry is not equivalent to its dual. The flatness  of a path geometry, given by a 2nd order ODE $y''=f(x,y',y'')$, 
%
%
 is detected by the vanishing of  the relative invariants 
\begin{align}
\begin{split}\label{eq:I}
I_1=&f_{pppp}, \\
I_2=&D^2f_{pp} -4Df_{py}  + f_p(4f_{py}- Df_{pp})- 3f_{pp}f_y + 6f_{yy},
\end{split}
\end{align}
where $p=y'$ and $D=\partial_x+p\partial_y+f\partial_p.$ 

The vanishing of $I_1$  simply means  that $F$ is at most cubic in $y'$. This  is a diffeomorphism invariant property, characterizing  projective path geometries. The vanishing of  $I_2$ is equivalent to the projectivity of the dual path geometry. Thus a path geometry is flat if and only if it is projective and its dual path geometry is projective as well.

\paragraph{Kepler orbits of fixed energy.} We can now fill the  gap left out in the proof of Theorem \ref{thm:hill1}. 
\begin{proposition}\label{prop:flatE}Kepler orbits of fixed energy $E\neq 0$ form  a non-flat path geometry. In fact, $I_1=0$ but $I_2\neq 0$. Thus the maximum dimension of the symmetry group of such a family is 3. 
\end{proposition}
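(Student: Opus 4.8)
The plan is to write down the second-order ODE whose solution curves are the fixed-energy Kepler orbits and then evaluate the two relative invariants $I_1, I_2$ of Equation \eqref{eq:I} directly. Since the vanishing of $I_1$ and $I_2$ is invariant under point transformations of the plane, I am free to choose whatever coordinates make the ODE cleanest, and the polar description is by far the most convenient (the map $(\theta,\rho)\mapsto(x,y)$ with $\rho=1/r$ is a local diffeomorphism, hence a legitimate point transformation). Writing $\rho=1/r$ as a function of $\theta$, every conic with focus at the origin satisfies $\rho''+\rho=1/p$, where $p=M^2$ is the semi-latus rectum (this is exactly the fixed-angular-momentum equation noted in the remark after Theorem \ref{thm:angmom}). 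To impose fixed energy instead, I would eliminate $p$ and the eccentricity $e$ using the two relations $e^2=(p\rho-1)^2+(p\rho')^2$ and $e^2=1+2Ep$ (the latter being the energy--eccentricity relation $E=(e^2-1)/(2p)$). Solving gives $p=2(\rho+E)/(\rho^2+\rho'^2)$, and equating this with $p=1/(\rho''+\rho)$ yields the autonomous ODE
\[
\rho''=\frac{\rho'^2-\rho^2-2E\rho}{2(\rho+E)}.
\]

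With $x=\theta$, $y=\rho$ and $p=y'$, the right-hand side is $f(x,y,p)=(p^2-y^2-2Ey)/\bigl(2(y+E)\bigr)$. The first invariant is then immediate: $f$ is quadratic in $p$, so $I_1=f_{pppp}=0$. This is the expected outcome, since fixed-energy orbits are the geodesics of the Jacobi--Maupertuis metric and hence form a projective path geometry, which is precisely the content of $I_1=0$. The whole question therefore reduces to computing $I_2$ and checking that it does not vanish identically.

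For the $I_2$ computation I would first simplify by setting $u=y+E$, so that $f=(p^2+E^2)/(2u)-u/2$ and $\partial_y=\partial_u$. The required partials are elementary: $f_p=p/u$, $f_{pp}=1/u$, $f_y=-(p^2+E^2)/(2u^2)-1/2$, $f_{yy}=(p^2+E^2)/u^3$ and $f_{py}=-p/u^2$. The only mildly delicate part, and the place where bookkeeping errors are most likely, is applying the total-derivative operator $D=\partial_x+p\partial_y+f\partial_p$ twice; here $f$ being autonomous kills the $\partial_x$ terms, and one finds $Df_{pp}=f_{py}=-p/u^2$ and then $D^2f_{pp}=Df_{py}=(3p^2-E^2)/(2u^3)+1/(2u)$. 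Substituting all of this into Equation \eqref{eq:I} and collecting terms, I expect the $p$-dependence and the $1/u$ contributions to cancel completely, leaving
\[
I_2=\frac{9E^2}{(\rho+E)^3},
\]
which is nonzero precisely because $E\neq 0$.

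The main obstacle is thus purely computational --- organizing the $D^2f_{pp}$ and $Df_{py}$ calculations so that the cancellations are transparent --- rather than conceptual. Two features serve as useful checks: the final answer is a nonnegative multiple of $E^2$, and it degenerates to $0$ as $E\to 0$, consistent with Kepler parabolas being flat (Theorem \ref{thm:parab}). Once $I_1=0$ and $I_2\neq 0$ are established, the path geometry is projective but its dual is not, so the family is non-flat; by the result quoted in the Appendix, the symmetry group of each fixed-energy family then has dimension at most $3$, completing the gap left in the proof of Theorem \ref{thm:hill1}.
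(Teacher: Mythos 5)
Your proposal is correct and takes essentially the same route as the paper: both write the second-order ODE $\rho''=(\rho'^2-\rho^2-2E\rho)/\bigl(2(\rho+E)\bigr)$ for fixed-energy orbits in the coordinates $(\theta,\rho=1/r)$ and then evaluate the relative invariants of Equation \eqref{eq:I}, arriving at $I_1=0$ and $I_2=9E^2/(\rho+E)^3\neq 0$. The only cosmetic differences are that you derive the ODE from the latus-rectum and eccentricity relations rather than the paper's elimination of $(a,b,c)$ via $\rho+\rho''=c$, $(\rho')^2+(\rho'')^2=a^2+b^2$, $2cE=a^2+b^2-c^2$, and that you spell out the $I_2$ computation (with the convenient substitution $u=\rho+E$), which the paper leaves implicit; your intermediate partials and the final cancellation check out.
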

\begin{proof}We 1st write down a 2nd order ODE for Kepler orbits of energy $E$. Using the equation $ax+by+cr=1$  of Theorem \ref{thm:EM}(a), we get 
$$\rho=a \cos\theta+b\sin\theta+c,\ \rho'=-a \sin\theta+b\cos\theta, \ \rho''=-a \cos\theta-b\sin\theta, $$
where  $x=r\cos\theta, y=r\sin\theta, r=1/ \rho.$ It follows that 
$$\rho+\rho''=c,\ (\rho')^2+(\rho'')^2=a^2+b^2.$$
Using this in $ 2cE=a^2+b^2-c^2 $ (Equation \eqref{eq:EM} with $c>0$), we get, 
$$\rho''={\rho^2+\rho'^2\over 2(\rho+E)}-\rho.$$
Using  Equations \eqref{eq:I} we get  $I_2=9E^2/(E+\rho)^3,$ hence $I_2\neq 0$ for $E\neq 0.$
\end{proof}

\begin{remark}Incidentally, the formula  $I_2=9E^2/(E+\rho)^3$ of the last proof gives another proof of Theorem \ref{thm:parab}.
\end{remark}
\paragraph{Central forces with flat  orbit space. The W\"unschman condition.} Theorem \ref{thm:main1} establishes that Kepler orbits form a flat 3-parameter family of curves, i.e. locally diffeomorphic to the family of vertical parabolas, given by $y'''=0$. Using the squaring   map, $\z\mapsto \z^2$,  this result extends to Hooke orbits, the family of central  conics, trajectories of a mass under  Hooke's force laws, $\ddot\r=\pm \r.$ 
{\em Are there any other force laws,  whose orbits form a flat family of plane curves?}

We do not know the answer in general. But for {\em central} force laws, i.e. Newton's equations of the form $\ddot\r=f(r) \r/r,$  the answer is negative. To prove it,  we show that in fact the Hooke and Kepler  laws are the only central force laws satisfying a condition weaker than flatness, called the  {\em W\"unschman condition} (1905). Given a 3-parameter family of plane curves, one defines null cones in the parameter space whose rulings consist of the curves that are tangent to a fixed line at a fixed point. In the flat case, such as the space of Kepler orbits,  these cones are quadratic and thus define a (flat) conformal structure on the parameter space. However, for a general family, these cones may fail to be quadratic. The families for which the null  cones are  quadratic, and hence define a conformal Lorentzian metric on the parameter space, are characterized by a complicated PDE on the ODE  that  defines this family, studied by K. W\"unschmann \cite{W}. For  a modern presentation of this deep result see \cite{N}.

\begin{theorem}\label{thm:flat} The orbits of the system $\ddot\r=f(r) \r/r$ form a flat 3-parameter family of plane curves if and only if $f(r)$ is a constant  multiple of $r$ or $1/r^2$. In fact, these force laws are the only central ones satisfying the W\"unschmann condition. 
\end{theorem}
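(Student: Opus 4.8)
The plan is to reduce the statement to a computation of the W\"unschmann invariant for a general central-force orbit family, and then solve the resulting ODE on $f$. First I would set up the 3rd order ODE whose solution graphs are the orbits of $\ddot\r=f(r)\r/r$. Working in polar coordinates with $\rho=1/r$ as the dependent variable and $\theta$ as the independent variable is the natural choice, since for central forces the angular momentum $M=r^2\dot\theta$ is conserved and Binet's equation gives a clean second-order relation. Recall the classical Binet formula: along an orbit, $M^2\rho^2(\rho''+\rho)=-f(1/\rho)$, where $\rho''=d^2\rho/d\theta^2$. This is a second-order ODE, but it contains the parameter $M$; to obtain a genuine 3-parameter family (a single 3rd order ODE free of parameters) I would differentiate once more in $\theta$ to eliminate $M^2$. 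Solving Binet's equation for $M^2$ and substituting into its $\theta$-derivative yields a 3rd order ODE of the form $\rho'''=F(\rho,\rho',\rho'')$, with $F$ depending on $f$ and its derivatives.

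Next I would impose the W\"unschmann condition on this 3rd order ODE. The key simplification, which I expect makes the whole computation tractable, is that for central forces the resulting ODE enjoys two symmetries inherited from the problem: rotational symmetry $\partial_\theta$ (the equation is autonomous in $\theta$) and the scaling symmetry coming from $r\partial_r$ (see the first two vector fields of Equation \eqref{eq:vf}). These two symmetries force $F$ to depend on $\rho,\rho',\rho''$ only through a reduced set of combinations, collapsing the W\"unschmann PDE --- in general a formidable expression in the partial derivatives of $F$ --- into a single ordinary differential equation for the function $f$ of one variable. I would then extract this reduced equation explicitly.

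The main obstacle will be carrying out the W\"unschmann computation without drowning in the algebra: the W\"unschmann relative invariant is a high-order differential polynomial in the right-hand side of a 3rd order ODE, so even after exploiting symmetry the intermediate expressions are large. I would organize this by choosing good invariant coordinates on the jet space before substituting, and by using the scaling weight to track which terms can survive. The expected payoff is an ODE on $f$ whose general solution is exactly $f(r)=\alpha r$ or $f(r)=\beta/r^2$ (with the flat cases of Hooke and Kepler, and no others).

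Finally, having identified that only these two force laws satisfy the W\"unschmann condition, I would invoke the earlier results of the paper to upgrade ``W\"unschmann'' to ``flat'': Theorem \ref{thm:main1} (together with its Hooke analogue obtained via the squaring map $\z\mapsto\z^2$, as discussed in the final section) shows that both the Kepler and Hooke families are genuinely flat, not merely W\"unschmann. Since flatness implies the W\"unschmann condition, the two conditions single out precisely the same pair of force laws, giving both directions of the equivalence and completing the proof. I would also note that the degenerate and excluded cases (such as $f$ making the orbits degenerate to lines through the origin) are automatically ruled out by the requirement that we have an honest 3-parameter family.
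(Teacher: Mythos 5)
Your proposal follows essentially the same route as the paper: you derive the same parameter-free 3rd order ODE $\rho'''=\rho'\left[(\rho''+\rho)\left(\frac{f'(\rho)}{f(\rho)}-\frac{2}{\rho}\right)-1\right]$ (the paper cites Kasner for it; you re-derive it by differentiating Binet's equation to eliminate $M^2$), impose the W\"unschmann condition to reduce to differential equations on $f$ whose only solutions are the Hooke and Kepler laws, and close the loop exactly as the paper does, using Theorem \ref{thm:main1} together with the squaring map $\z\mapsto\z^2$ for flatness of those two families, plus the fact that flatness implies the W\"unschmann condition. The only minor difference is organizational: the paper's computation yields a pair of ODEs for $f$ (coefficients of distinct monomials in $\rho',\rho''$ in the W\"unschmann expression) rather than the single symmetry-reduced equation you anticipate, but this does not affect the argument.
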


\begin{proof} Following the standard procedure outlined above, we first write a 3rd order ODE whose solutions are the (unparametrized) orbits of the system $\ddot\r=f(r) \r/r$,

\be\label{eq:cent}\rho'''=\rho' \left[(\rho'' + \rho)\left({f'(\rho)\over f(\rho)} - {2\over \rho}\right) - 1\right],\ee
where $\rho=1/r, $ $\rho=\rho(\theta)$ (see for example \cite{K}). Next, the W\"unschmann condition for $\rho'''=F(\rho, \rho', \rho'')$ is 
$$F_\rho + \left(D  - {2\over 3}F_{\rho''}\right) K=0,$$
where
$$K={1\over 6}DF_{\rho''}-{1\over 9}F_{\rho''}^2-{1\over 2}F_{\rho'}, \quad D=\partial_\theta+\rho'\partial_\rho+\rho''\partial_{\rho'}+F'\partial_{\rho''}.$$
See \cite[Equation 8]{N}. Applying this condition to the right hand side of Equation 
\eqref{eq:cent}, we get a pair of ODEs for $f(\rho)$, whose only solutions are constant multiples of $\rho^2$ and $1/\rho.$
\end{proof}

\paragraph{Central forces   and projective path geometries.}
As mentioned above, in the   local classification of path geometries admitting a 3-dimensional group of symmetries there are only 3 projective cases, where the paths arise as the unparametrized geodesics of a torsionless affine connection. In general, a projective path geometry need not be a {\em metric}  path geometry, i.e. the affine connection may not be  the Levi-Civita connection of  a pseudo-Riemannian metric, but in our  3 cases they are metric connections. In fact, all 3 cases arise as the orbits of fixed energy of conservative mechanical systems, and thus can be realized as geodesics of the associated Jacobi-Maupertuis metric. Let us list the 3 cases by  2nd order ODEs defining them:

\begin{itemize}

\item[I.] $y''=0$. 

\item[II.]  $y''=(xy'-y)^3$

\item[III.]  $y''=-(xy'-y)^3$
\end{itemize}
(See e.g. \cite{DK}, where our type I is item 4 of Theorem 7 and our types II and III are items $3d_+$ and $3d_-$ ,  respectively.) 

Type I is the flat path geometry, admitting an  $8$-dimensional symmetry group, the projective group $\PGL_3(\R)$. 
Type II and III are non-flat, each admitting $\SLt$ as a local symmetry group. In both types II and III the $\SLt$ action is locally equivalent to the standard linear action on $\R^2\setminus 0$. The dual actions, on the dual path geometries,  are non equivalent: for the dual of type II    $\SLt$ acts by isometries of the hyperbolic plane and in the dual of type III as isometries of pseudo-hyperbolic plane (non-flat  constant curvature Lorentzian metric).   Both actions appear naturally as open orbits of the projectivized adjoint representation of $\SLt$. 

In Table \ref{tab:proj} we place some 2-parameter families of curves arising naturally in planar mechanical   systems  with central-force laws,  locally realizing the 3 path geometries. In the 1st two rows we consider central-force power laws,  $\ddot\r=f(r)\r/r,$ $f(r)=\pm r^\alpha,$ where    $M$ and $E$ are the  (fixed) angular momentum and energy, respectively. 
In parentheses is  the force   law ($\pm r^\alpha$, with `--' for attractive and `+' for repelling). In the following two rows $E_k$ is the energy, $M_k$ the angular momentum, for the Kepler problem in a space of constant curvature $k$, as in  \cite{AlbPr}.

\begin{table}[!htbp]
\renewcommand{\arraystretch}{2}
\captionsetup{ font=normalsize}
\caption{Projective path geometries and central-force laws}\label{tab:proj}\label{tab:proj}
{
\centering
 \begin{tabular}[t]{p{.3\textwidth}| p{.3\textwidth}|p{.3\textwidth} } 
I. $y''=0$. 
& II. $y''=(xy'-y)^3$
& III.  $y''=-(xy'-y)^3$\\ 
\cmidrule(lr){1-3}
  $M\ne 0$, ($\pm 1/r^2, \pm 1/r^3$) &  $M\ne 0$, ($-r$)  &  $M\ne 0$, ($r$)  \\ 
$E=0$, ($\pm r^\alpha $, $\alpha\neq -1$) &  $E\ne 0$, ($\pm 1/r^2, \pm r$) & \quad --\\
$|E_k| = \sqrt{-k}$, $k<0$ &  $|E_k|>\sqrt{-k}$, $k<0$ &  $|E_k|<\sqrt{-k}$, $k<0$  \\
 $M_k\ne 0$ &  $E_k$, $k>0$ & \quad--\\  
\end{tabular}
}
\end{table}

\paragraph{Some comments  on Table \ref{tab:proj}.}\ 

\sn{\bf 1.}  `Hooke' orbits, attractive or repelling  ($f=\pm r$),  with fixed  angular momentum $M$, were placed in the table by considering the   squaring  map, $\z\mapsto \z^2$.  They are thus mapped to Kepler orbits with fixed {\em minor} axis.  Attractive Hooke orbits ($f=-r$) are mapped to Kepler ellipses with fixed minor axis (see item \ref{ti:me} of Table \ref{tab:pd} and Lemma \ref{lemma:bohm}), which are equivalent to ellipses of constant energy (see proof of Corollary \ref{cor:fixedMin}), corresponding to type II path geometry. Repelling Hooke orbits   ($f=r$) are mapped to Kepler hyperbolas with fixed minor axis (item \ref{ti:mh} of Table \ref{tab:pd}), which is type III path geometry. 

 \sn{\bf 2.}  Zero energy   orbits for all   central-force power laws,  $f=\pm r^\alpha$, $\alpha\neq -1$, can be seen to give a flat path geometry (type I) by using the Jacobi-Maupertuis metric: by making the change of variable $r=\rho^{2/(\alpha+3)}$ for $\alpha\neq -3$, or $r=e^\rho$ for $\alpha=-3$, one shows that such families  are equivalent to geodesics on a quadratic cone, so are locally equivalent to lines in the plane \cite[\S4]{Mont}. More generally,  for planar motion $\ddot\r=-\nabla U$, with  potential  satisfying $\Delta\log U = \lambda U$ for some $\lambda\in\R$, the orbits at energy zero will also be locally flat.

 \sn{\bf 3.}  By computing the relative invariants $I_1, I_2$ of Equation \eqref{eq:I}, it can be shown that  orbits with fixed non-zero energy    are non-flat for all   central-force power  laws. It also shows that zero energy orbits for $f=\pm r^\alpha$ are flat if and only if $\alpha\neq -1$. 
Furthermore, by using additional  (relative) invariants \cite[\S6]{DK}, one finds that these path geometries admit a 3-dimensional symmetry group only for the  Hooke and Kepler laws  ($\alpha=1, -2$).  
   
 \sn{\bf 4.} Using  $I_1, I_2$, it can be  also shown that among all central-force power laws, orbits at a fixed non-zero angular momentum are  flat only for the  Kepler and inverse cubic force laws ($\alpha=-2, -3$).

\end{document}